\newcommand{\EE}{\mathcal{E}}
\newcommand{\FF}{\mathcal{F}}
\newcommand{\OO}{\mathcal{O}}
\newcommand{\VV}{\mathcal{V}}
\newcommand{\TT}{\mathcal{T}}
\newcommand{\UU}{\mathcal{U}}
\newcommand{\N}{\mathbb{N}}
\renewcommand{\O}{\mathbb{O}}
\renewcommand{\P}{\mathbb{P}}
\newcommand{\R}{\mathbb{R}}
\newcommand{\T}{\mathbb{T}}
\newcommand{\Z}{\mathbb{Z}}
\newcommand{\dd}{\,{\rm d}}
\renewcommand{\div}{\mathop{\mathrm{div}}}
\newcommand{\curl}{\mathop{\mathrm{curl}}}
\renewcommand{\:}{\thinspace :}
\newcommand{\meas}{\mathrm{meas}}
\newcommand{\loc}{\mathrm{loc}}
\newcommand{\DS}{\displaystyle}
\newcommand{\1}{\mathbf{1}}
\newcommand{\tsp}{\mathrm{t}}
\newcommand{\BUC}{\mathrm{BUC}}
\newcommand{\BMO}{\mathrm{BMO}}
\newtheorem{theorem}{Theorem}[section]
\newtheorem{definition}[theorem]{Definition}
\newtheorem{proposition}[theorem]{Proposition}
\newtheorem{lemma}[theorem]{Lemma}
\newtheorem{corollary}[theorem]{Corollary}
\theoremstyle{definition}
\newtheorem{remark}[theorem]{Remark}
\newcommand{\QED}{\mbox{}\hfill$\Box$}
\numberwithin{equation}{section}
\begin{document}

\title{Energy bounds for the two-dimensional Navier-Stokes 
equations in an infinite cylinder}

\author{
\null\\
{\bf Thierry Gallay}\\ 
Institut Fourier\\
Universit\'e de Grenoble 1\\
BP 74\\
38402 Saint-Martin-d'H\`eres, France\\
{\tt Thierry.Gallay@ujf-grenoble.fr}
\and
\\
{\bf Sini\v{s}a Slijep\v{c}evi\'c}\\    
Department of Mathematics\\ 
University of Zagreb\\
Bijeni\v{c}ka 30\\
10000 Zagreb, Croatia\\
{\tt slijepce@math.hr}}

\date{}

\maketitle

\begin{abstract}
We consider the incompressible Navier-Stokes equations in the cylinder
$\R \times \T$, with no exterior forcing, and we investigate the long-time
behavior of solutions arising from merely bounded initial data. Although
we do not know if such solutions stay uniformly bounded for all times, 
we prove that they converge in an appropriate sense to the family of 
spatially homogeneous equilibria as $t \to \infty$. Convergence is 
uniform on compact subdomains, and holds for all times except on 
a sparse subset of the positive real axis. We also improve the known
upper bound on the $L^\infty$ norm of the solutions, although our
results in this direction are not optimal. Our approach is based 
on a detailed study of the local energy dissipation in the system, in
the spirit of a recent work devoted to a class of dissipative partial 
differential equations with a formal gradient structure \cite{GS2}. 
\end{abstract}

\maketitle

\section{Introduction}\label{sec1}

The aim of this paper is to give some insight into the intrinsic
dynamics of the two-dimensional incompressible Navier-Stokes equations
in an unbounded domain. We consider the situation where a viscous
fluid evolves freely without being driven by any external force, so
that the motion originates entirely from the initial data, and we aim
at obtaining general informations on the long-time behavior of the
system. If the fluid fills a bounded domain $\Omega \subset \R^2$ and
satisfies no-slip boundary conditions, it is well-known that the
velocity converges exponentially fast to zero as $t \to \infty$, and
the long-time asymptotics can be accurately described \cite{FS}. In an
unbounded domain such as the whole plane $\Omega = \R^2$, solutions
with finite kinetic energy also converge to the uniform rest state
\cite{Ma}, and the (algebraic) decay rate of the velocity can be
specified under appropriate localization assumptions on the initial
data \cite{Sch,Wi}. Similar results can be obtained for
infinite-energy solutions if the vorticity of the fluid is integrable
\cite{GW}, in which case the velocity field decays to zero roughly
like $|x|^{-1}$ as $|x| \to \infty$.

The problem is far more complicated if we consider the situation 
where the velocity field is merely bounded, or decays very slowly to
zero at infinity. In that case the Cauchy problem for the
Navier-Stokes equations is still globally well-posed \cite{GMS,ST}, but
essentially nothing is known about the long-time behavior of the
solutions. In fact, it is even unclear whether the $L^\infty$ norm of
the velocity field $u(\cdot,t)$ stays bounded for all times. For instance,
if the fluid fills the whole plane $\R^2$, the best estimate we are
aware of is due to Zelik \cite{Ze}, and takes the form
$\|u(\cdot,t)\|_{L^\infty} \le C(1+t)^2$, but there is no reason to
believe that such a bound is sharp. Although the question may look
rather academic at first sight, we believe that it is important to
understand the behavior of solutions that are just bounded, 
because they may exhibit a nontrivial dynamics that is 
created by the equation itself, through the initial data, and 
does not result from an artificial exterior forcing. An intimately 
related problem is to understand the dynamics of the Navier-Stokes
equations in large but bounded domains, and in particular to derive 
{\em uniformly local} energy estimates that are independent of 
the size of the domain, or of the total kinetic energy. 

As a first step in this direction, following Afendikov and Mielke
\cite{AM}, we study in the present paper the situation where the 
fluid velocity $u(x,t) \in \R^2$ and the pressure $p(x,t) \in \R$ are 
{\em periodic} in one space direction. As is discussed in \cite{AM}, 
this assumption considerably simplifies the dynamics of the Navier-Stokes 
equations, but the periodic setting still includes interesting and 
nontrivial examples, such as Kolmogorov flows. We thus consider 
the system
\begin{equation}\label{NSeq}
  \partial_t u + (u\cdot \nabla)u \,=\, \Delta u-\nabla p~, 
  \qquad \div u \,=\, 0~,
\end{equation}
in the two-dimensional cylinder $\O = \R\times\T$, where $\T = \R/\Z$.
If $x = (x_1,x_2) \in \O$, the unbounded coordinate $x_1 \in \R$ will
be referred to as the ``horizontal variable'' and $x_2 \in \T$ will be
the ``vertical variable'', although no gravity is included in our
model. For simplicity, all physical parameters, such as the kinematic
viscosity and the fluid density, have been eliminated by rescaling.
Eq.~\eqref{NSeq} is the system studied by Afendikov and Mielke in the
particular case where no exterior force drives the fluid. Our goal
is to sharpen the conclusions of \cite{AM} and to obtain more
precise information on the long-time behavior of the solutions in that
particular situation.

Since we want to consider solutions of \eqref{NSeq} that are merely
bounded, we suppose that the velocity field $u(\cdot,t)$ belongs for
each $t \ge 0$ to the Banach space $\BUC(\O)$ of all bounded and
uniformly continuous functions $u : \O \to \R^2$ equipped with the
supremum norm
\[
  ||u||_{L^\infty(\O)} \,=\, \sup_{x\in \O}|u(x)|~, \qquad \hbox{where}
  \quad |u| = (u_1^2 + u_2^2)^{1/2}~. 
\]
Given a velocity field $u \in \BUC(\O)$, the associated pressure $p$ 
is determined by solving the elliptic equation $-\Delta p = 
\div((u\cdot\nabla)u)$ with appropriate conditions at infinity. 
As is explained in \cite{AM}, it is very important to specify the 
behavior of the pressure, because different choices lead 
to genuinely different dynamics. For flows that are not driven by a 
pressure gradient at infinity, the standard choice is 
\begin{equation}\label{pdef}
  p \,=\, \sum_{k,\ell=1}^2 R_k R_\ell u_k u_\ell~, 
\end{equation}
see \cite{GMS,Ka,AM}, where $R_1, R_2$ are the Riesz transforms 
in the cylinder $\O = \R \times \T$. For instance, if $u \in
\BUC(\O)$, the pressure $p$ defined by \eqref{pdef} belongs to 
$\BMO(\O)$, the space of all functions with bounded mean oscillation
on $\O$. Taking \eqref{pdef} into account, the Navier-Stokes 
equations \eqref{NSeq} can be written in the compact form 
$\partial_t u + \P (u\cdot\nabla)u = \Delta u$, where $\P$ 
is the Leray-Hopf projection defined by its matrix elements 
$\P_{jk} = \delta_{jk} + R_j R_k$, $1 \le j,k \le 2$.  

The following result summarizes the main conclusions of 
\cite{AM} for flows without exterior forcing\:

\begin{theorem}\label{thm1} {\bf \cite{AM}}
For any initial data $u_0 \in \BUC(\O)$ with $\div u_0 = 0$, 
the Navier-Stokes equations \eqref{NSeq}, \eqref{pdef} have 
a unique global solution $u \in C^0([0,+\infty),\BUC(\O))$ such 
that $u(0) = u_0$. Moreover, there exists $C > 0$ (depending 
on $u_0$) such that 
\begin{equation}\label{ubound1}
  \|u(\cdot,t)\|_{L^\infty(\O)} \,\le\, C(1+t)^{1/2}~, \qquad 
  \hbox{for all }~ t \ge 0~.
\end{equation}
\end{theorem}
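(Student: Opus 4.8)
The plan is to prove Theorem 1.1, which has two parts: global well-posedness in $\BUC(\O)$, and the growth bound \eqref{ubound1}. Let me think about how I'd approach each.

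For the well-posedness part, the natural approach is to treat the Navier-Stokes equations as a semilinear evolution equation in the Banach space $\BUC(\O)$. Writing the equation in the projected form $\partial_t u + \P(u\cdot\nabla)u = \Delta u$, we use the heat semigroup $e^{t\Delta}$ generated by the Laplacian on $\O = \R \times \T$. The key analytic fact is that $e^{t\Delta}$ acts as a bounded (indeed analytic) semigroup on $\BUC(\O)$, and that its spatial derivatives satisfy the standard smoothing estimates $\|\nabla e^{t\Delta}f\|_{L^\infty} \le C t^{-1/2}\|f\|_{L^\infty}$. The nonlinearity $\P(u\cdot\nabla)u = \P\,\div(u\otimes u)$ can be handled by pulling the divergence onto the semigroup: one writes the Duhamel formula

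$$u(t) = e^{t\Delta}u_0 - \int_0^t \nabla\cdot e^{(t-s)\Delta}\P(u\otimes u)(s)\dd s,$$

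so that each application of $e^{(t-s)\Delta}\nabla\cdot$ costs a factor $(t-s)^{-1/2}$, which is integrable. A standard contraction-mapping (Picard) argument in $C^0([0,T],\BUC(\O))$ then yields a unique local solution on a time interval whose length depends only on $\|u_0\|_{L^\infty}$. Here one must verify that $\P$ is bounded on $\BUC$-type spaces; since $\P_{jk} = \delta_{jk} + R_jR_k$ involves Riesz transforms, it is not bounded on $L^\infty$ in general, but the composition $e^{t\Delta}\P\,\div$ is a convolution operator with an integrable kernel enjoying good decay, which is what one actually needs.

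For the growth bound \eqref{ubound1}, and to upgrade the local solution to a global one, the plan is to derive an a priori estimate on $\|u(\cdot,t)\|_{L^\infty}$ that grows at most like $(1+t)^{1/2}$. The mechanism exploits the periodicity in the vertical direction: the horizontal average $\bar u(x_1,t) = \int_\T u(x_1,x_2,t)\dd x_2$ obeys a simpler one-dimensional system, and the bound on the full solution can be propagated using the Duhamel formula together with the smoothing estimates above. Concretely, inserting the local-in-time bound into the integral term and estimating $\|u(t)\|_{L^\infty} \le \|u_0\|_{L^\infty} + C\int_0^t (t-s)^{-1/2}\|u(s)\|_{L^\infty}^2\dd s$ would only give short-time control; the $(1+t)^{1/2}$ rate must come from a genuine dissipation estimate, most plausibly an $L^2$-type energy bound in the vertical variable combined with the Poincar\'e inequality on $\T$, which controls the oscillation of $u$ about its vertical mean and feeds back into the sup-norm via parabolic regularity.

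The main obstacle I expect is precisely this growth bound rather than the well-posedness. Local existence by fixed-point is essentially routine once the semigroup estimates are in hand, but obtaining the sublinear rate $(1+t)^{1/2}$ requires understanding how the nonlinear term can at worst pump energy into the system. The delicate point is that in an infinite-energy setting one has no global conserved or monotone quantity to lean on, so the estimate must be \emph{uniformly local}: one controls the energy on bounded horizontal windows and shows that the flux through the window boundaries cannot drive faster-than-$\sqrt t$ growth. This is where the periodic geometry of the cylinder is essential, since the compactness of $\T$ provides the vertical Poincar\'e inequality that would be unavailable in the full plane $\R^2$ — consistent with the fact that in $\R^2$ only the weaker bound $C(1+t)^2$ of \cite{Ze} is known.
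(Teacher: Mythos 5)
Your first half (local well-posedness) is essentially the paper's own argument: Duhamel's formula, the smoothing estimate for $\nabla\cdot e^{t\Delta}\P(u\otimes u)$ (whose kernel is integrable even though $\P$ alone is not bounded on $L^\infty$), and a contraction in $C^0([0,T],\BUC(\O))$ with $T$ depending only on $\|u_0\|_{L^\infty}$, so that a blow-up of the sup norm is the only obstruction to global existence. Up to this point there is nothing to object to.

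For the growth bound $(1+t)^{1/2}$, however, there is a genuine gap: you correctly observe that the naive Duhamel iteration cannot give it, but the mechanism you propose instead (``an $L^2$-type energy bound in the vertical variable combined with the Poincar\'e inequality on $\T$, feeding back into the sup-norm via parabolic regularity'') is not the one that works, and it is circular as stated. The missing idea is the \emph{vorticity}. Since $\omega = \partial_1 u_2 - \partial_2 u_1$ solves the pure advection-diffusion equation $\partial_t \omega + u\cdot\nabla\omega = \Delta\omega$, the parabolic maximum principle makes $\|\omega(\cdot,t)\|_{L^\infty}$ nonincreasing in time. In the cylinder (unlike in $\R^2$) the Biot--Savart kernel $\nabla^\perp \overline{K}$ is integrable, so this uniform vorticity bound controls, uniformly in time, the whole oscillating part $\widehat u = u - \langle u\rangle$ of the velocity. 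The only component that can grow is then the vertical mean flow $m = \langle u_2\rangle$, which solves the one-dimensional equation $\partial_t m + \partial_1 \langle \widehat u_1 \widehat u_2\rangle = \partial_1^2 m$: its source term is the $x_1$-derivative of a function bounded by $M^2$, so Duhamel for the 1D heat semigroup gives
\begin{equation*}
  \|m(\cdot,t)\|_{L^\infty} \,\le\, \|m(\cdot,0)\|_{L^\infty}
  + \int_0^t \frac{M^2}{\sqrt{\pi(t-s)}}\dd s
  \,=\, \|m(\cdot,0)\|_{L^\infty} + \frac{2M^2 t^{1/2}}{\sqrt{\pi}}~,
\end{equation*}
which is exactly where the $\sqrt{t}$ comes from, and simultaneously rules out finite-time blow-up. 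Your alternative cannot close: controlling the oscillation of $u$ by an energy estimate presupposes a uniform bound on the local energy, which is precisely the open problem the paper emphasizes; and even the paper's energy machinery (where Poincar\'e--Wirtinger does appear) needs the vorticity bound first, since the flux estimate $h^2 \le C\,e\,d$ uses $\|\widehat u\|_{L^\infty} \lesssim M$ and $\|p\|_{L^\infty}\lesssim M^2$, and that route yields only the later $t^{1/6}$ refinement, on top of --- not instead of --- the vorticity argument. Relatedly, what makes the cylinder special for Theorem 1.1 is not the vertical Poincar\'e inequality but the integrability of the Biot--Savart kernel on $\R\times\T$.
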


An interesting open question is whether the solutions of \eqref{NSeq}, 
\eqref{pdef} given by Theorem~\ref{thm1} stay uniformly bounded for
all times. In this direction, we just mention the following improvement 
of estimate \eqref{ubound1}, which will come as a byproduct of our 
analysis. 

\begin{theorem}\label{thm2} Under the assumptions of 
Theorem~\ref{thm1}, there exists a constant $C > 0$ (depending 
on the initial data $u_0$) such that 
\begin{equation}\label{ubound2}
  \|u(\cdot,t)\|_{L^\infty(\O)} \,\le\, C(1+t)^{1/6}~, \qquad 
  \hbox{for all }~ t \ge 0~.
\end{equation}
\end{theorem}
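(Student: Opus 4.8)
The plan is to derive the sharper bound \eqref{ubound2} from \emph{uniformly local} energy estimates, extracting the improvement over \eqref{ubound1} from the local energy \emph{dissipation} rather than from the pointwise size of $u$ alone, in the spirit of \cite{GS2}. Since the datum $u_0$ is merely bounded, I would first invoke parabolic smoothing: for every $t \ge 1$ the solution is smooth and its derivatives on the interval $[t-1,t]$ are controlled by $\sup_{s\in[t-1,t]}\|u(s)\|_{L^\infty(\O)}$, so it suffices to estimate this quantity (the range $t\in[0,1]$ being covered by Theorem~\ref{thm1}). The relevant functionals are the uniformly local kinetic energy $\mathcal{E}(t) = \sup_{x\in\O}\int_{B(x,1)}\tfrac12|u(y,t)|^2\dd y$ and the uniformly local second-order quantity $\mathcal{Z}(t) = \sup_{x\in\O}\int_{B(x,1)}|\nabla^2 u(y,t)|^2\dd y$. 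Localizing \eqref{NSeq} against a smooth unit cutoff yields differential identities for these objects in which the genuinely dissipative terms are $\int|\nabla u|^2$ and $\int|\nabla^3 u|^2$, while the remaining terms are fluxes through the window boundary.

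The engine of the argument is Agmon's inequality in its uniformly local form, $\|u(\cdot,t)\|_{L^\infty(\O)} \le C\,\mathcal{E}(t)^{1/4}\,\mathcal{Z}(t)^{1/4}$, valid in two space dimensions. Note that the \emph{a priori} bounds following trivially from Theorem~\ref{thm1}, namely $\mathcal{E}(t),\mathcal{Z}(t)\le C(1+t)$, already reproduce \eqref{ubound1}; the task is therefore to lower the exponents. More precisely, if one can establish $\mathcal{E}(t)\le C(1+t)^{a}$ and $\mathcal{Z}(t)\le C(1+t)^{b}$ with $a+b = 2/3$, then Agmon's inequality yields $\|u(\cdot,t)\|_{L^\infty(\O)} \le C(1+t)^{(a+b)/4} = C(1+t)^{1/6}$, which is \eqref{ubound2}. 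To produce such exponents I would analyze the differential inequalities for $\mathcal{E}$ and $\mathcal{Z}$: after using Young's inequality to absorb part of the dissipation, each takes a form in which the growth is driven by a flux term bounded by $\|u\|_{L^\infty}$ times a lower-order factor, while the dissipation controls the loss. Inserting the a priori rate $\|u\|_{L^\infty}\le C(1+t)^{1/2}$ from Theorem~\ref{thm1} and integrating gives sublinear growth for $\mathcal{E}$ and $\mathcal{Z}$; reinserting the improved $L^\infty$ bound and iterating, the exponents decrease and stabilize at a fixed point with $a+b = 2/3$.

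The main obstacle is the control of the flux terms generated by localization. The transport term $(u\cdot\nabla)u$ contributes a boundary flux proportional to $\|u\|_{L^\infty}$, but the more delicate contribution comes from the pressure: in the local energy identity $p$ enters through a flux $\sim\int u\,p$ across the window, and by \eqref{pdef} the pressure is only of bounded mean oscillation, $p\in\BMO(\O)$, so this nonlocal term must be estimated by combining the $\BMO$ bound with the available energy, without a pointwise control on $p$. It is precisely the lossiness of these flux estimates that prevents the scheme from reaching the expected conclusion that $\|u\|_{L^\infty}$ stays bounded (which would correspond to $a+b=0$), and that leaves the exponent $1/6$ suboptimal. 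A secondary difficulty is to ensure that the nonlinear terms arising in the parabolic smoothing and enstrophy steps remain subordinate to the dissipation so that the bootstrap closes; here the two-dimensional structure is decisive, since the absence of vortex stretching keeps the enstrophy balance dissipative and makes the whole scheme viable.
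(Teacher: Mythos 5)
Your approach has genuine gaps that would prevent it from closing, and it misses the two structural ingredients on which the paper's proof actually rests: the vorticity maximum principle and the cylinder geometry. First, nowhere do you use the bound $\|\omega(\cdot,t)\|_{L^\infty} \le \|\omega(\cdot,t_0)\|_{L^\infty}$, which is the cornerstone of the paper's argument. Because of it, the pressure given by \eqref{pdef2} is pointwise bounded, $\|p\|_{L^\infty} \le C_2\|\omega\|_{L^\infty}^2$ (Lemma~\ref{l:bounds}), the oscillating part $\widehat u$ of the velocity is uniformly bounded, and the mean vertical flow $m = \langle u_2\rangle$ is uniformly Lipschitz in $x_1$ (since $\partial_1 m = \langle\omega\rangle$). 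The pressure difficulty you flag as ``the main obstacle'' is therefore not an obstacle in this problem at all --- but in your scheme, which retains only $p \in \BMO$, it is fatal, and you offer no way around it. Second, your unit-window differential inequalities do not produce sublinear growth: the transport/pressure flux through the boundary of a fixed window is bounded only by $\|u\|_{L^\infty}\,\mathcal{E}(t)$ (the term is cubic in $u$), so inserting the a priori rate $(1+t)^{1/2}$ from Theorem~\ref{thm1} gives $\dot{\mathcal{E}} \lesssim (1+t)^{1/2}\mathcal{E}$, whose solutions grow super-exponentially --- far worse than the trivial bound $\mathcal{E}(t) \lesssim \|u(\cdot,t)\|_{L^\infty}^2 \lesssim 1+t$. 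There is no mechanism in a generic localization forcing the flux to be small relative to the dissipation; in the paper that is exactly the content of the estimates $h^2 \le C_4\, e\, d$ and $|\partial_1 e|^2 \le 2ed$ (Lemma~\ref{l:ehd}), and these rest on the Poincar\'e inequality in the periodic direction (valid because $\langle u_1\rangle = 0$) together with the vorticity bound --- not on cut-off functions. Relatedly, your ``fixed point at $a+b=2/3$'' is asserted, not derived: no recursion for the exponents is exhibited, and the value $2/3$ is visibly reverse-engineered from the target $1/6$.

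For comparison, the paper's proof is short once the machinery of Sections~\ref{sec4}--\ref{sec6} is in place: all growth is carried by the one-dimensional mean flow $m(x_1,t)$ (so in particular second derivatives stay bounded, and the Agmon balance you envision is not where $1/6$ comes from). One interpolates the Lipschitz bound $|\partial_1 m| \le M$ against the averaged energy bound
\begin{equation*}
  \frac{1}{a}\int_{x-a}^{x+a} e(y,t)\dd y \,\lesssim\, e_*(0)\Bigl(1 + \frac{\sqrt{\kappa\beta t}}{a}\Bigr)~,
\end{equation*}
which follows from Proposition~\ref{p:dissip} (and ultimately from the flux estimate of Corollary~\ref{c:flux}, proved \emph{without} a priori bounds on $e_*(t)$ thanks to (A2') and (A5)). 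This gives $|m| \lesssim Ma + \bigl(e_*(0)(1+\sqrt{t}/a)\bigr)^{1/2}$, and optimizing over the window size $a = t^{1/6}$ --- a window growing with time, which your fixed unit windows cannot emulate --- yields \eqref{ubound2}. The exponent $1/6$ is thus the balance point between a uniform gradient bound and a diffusively spreading energy average, not the output of a bootstrap on local energy and enstrophy.
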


As is explained in Section~\ref{sec7} below, there are good reasons to
believe that the bound \eqref{ubound2} is not sharp either for large
times, but it is not clear what the optimal result should be.  This
question will not be addressed further here, and we hope to come back
to it in a future work. For the time being, our main purpose is to
show that it is possible to obtain rather precise information on the
long-time dynamics of equation \eqref{NSeq} even if uniform bounds 
on the solutions are not known a priori. To formulate our results, 
it is convenient to assume that the {\em mean horizontal flow} vanishes 
identically\:
\begin{equation}\label{czero}
  \langle u_1\rangle (x_1,t) \,:=\, \int_\T u_1(x_1,x_2,t)\dd x_2 
  \,=\, 0~, \qquad \hbox{for all }~ x_1 \in \R\,,~t \ge 0~.
\end{equation}
This, however, does not restrict the generality, as can be seen by the
following argument. Given a solution of \eqref{NSeq}, we define $m_1 =
\langle u_1\rangle$ and $a = \langle u_1^2 + p\rangle$, where
the brackets $\langle \cdot\rangle$ denote the vertical average, as in
\eqref{czero}. Using the divergence-free condition and integrating by
parts, it is easy to verify that $\partial_{x_1} m_1 = 0$, so that $m_1$
is a function of $t$ only. On the other hand, using the first equation
in \eqref{NSeq} we find $\partial_t m_1 + \partial_{x_1}a = 0$, thus 
$\partial_{x_1}a$ is also a function of $t$ only. If we assume that $u
\in \BUC(\O)$ and $p \in \BMO(\O)$, as in Theorem~\ref{thm1}, this
implies that $\partial_{x_1}a = 0$, hence $m_1 = \langle u_1\rangle$ is
a constant that can be computed from the initial data. We now define
\[
  \begin{pmatrix} \tilde u_1(x_1,x_2,t) \\ \tilde u_2(x_1,x_2,t)
  \end{pmatrix} \,=\, \begin{pmatrix} u_1(x_1+m_1t,x_2,t) - m_1\\ 
  \!u_2(x_1+m_1t,x_2,t)\phantom{ - m_1}\end{pmatrix}~, \qquad 
  \tilde p(x_1,x_2,t) \,=\, p(x_1+m_1t,x_2,t)~, 
\]
for all $x \in \O$ and $t \ge 0$. By Galilean invariance, it is clear 
that $\tilde u, \tilde p$ solve the Navier-Stokes equation \eqref{NSeq}, 
and by construction the mean horizontal flow $\langle \tilde u_1\rangle$ 
vanishes identically. 

The following theorem collects a few typical consequences 
of our main results, which will be formulated more precisely 
and in a greater generality in the subsequent sections. 

\begin{theorem}\label{thm3} Let $u$ be a solution of the Navier-Stokes 
equations given by Theorem~\ref{thm1} and satisfying \eqref{czero}. 
Then the following estimates hold\:\\[1mm]
1) There exists $C > 0$ (depending only on $u_0$) such that, 
for all $T > 0$, 
\begin{equation}\label{ubound3}
  \sup_{x_1 \in \R}\, \int_0^T \!\!\int_\T |u(x_1,x_2,t)|^2 \dd x_2 \dd t 
  \,\le\, C T~.
\end{equation}
2) There exists $C > 0$ (depending only on $u_0$) such that, 
for all $T > 0$ and all $R > 0$, 
\begin{equation}\label{ubound4}
  \int_{-R}^R \int_\T |u(x_1,x_2,T)|^2 \dd x_2 \dd x_1 + 
  \int_0^T \!\!\int_{-R}^R \int_\T |\nabla u(x_1,x_2,t)|^2 \dd x_2 
  \dd x_1 \dd t \,\le\, C(R + T^{1/2})~.
\end{equation}
3) For all $\epsilon > 0$ and all $R > 0$, we have
\begin{equation}\label{ubound5}
  \frac{1}{T}\,\meas\biggl\{t \in [0,T]\,\Big|\, \inf_{m \in \R}
  \,\sup_{|x_1|\le R}\,\sup_{x_2 \in \T} |u(x_1,x_2,t) - (0,m)^\tsp| 
  \ge \epsilon\biggr\}\,\xrightarrow[T \to \infty]{}\, 0~.
\end{equation}
\end{theorem}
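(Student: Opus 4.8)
The plan is to derive all three estimates from a single structural identity: a one-dimensional balance law for the vertically integrated local energy. Setting $\EE(x_1,t) = \tfrac12\int_\T |u(x_1,x_2,t)|^2\dd x_2$ and $\mathcal{D}(x_1,t) = \int_\T |\nabla u(x_1,x_2,t)|^2\dd x_2 \ge 0$, I would take the inner product of \eqref{NSeq} with $u$, integrate over the vertical fibre $\{x_1\}\times\T$, and use $\div u = 0$ to obtain $\partial_t\EE + \partial_{x_1}\FF + \mathcal{D} = 0$, with horizontal flux $\FF(x_1,t) = \int_\T\big( u_1(\tfrac12|u|^2 + p) - \partial_{x_1}\tfrac12|u|^2 \big)\dd x_2$. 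This is exactly the formal gradient structure advertised in the abstract: the local energy is transported by $\FF$ and destroyed by the nonnegative dissipation $\mathcal{D}$, but never created. All of \eqref{ubound3}, \eqref{ubound4}, \eqref{ubound5} should follow by integrating this identity and controlling $\FF$.

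The crux, and the place where \eqref{czero} is decisive, is to bound $\FF$ uniformly in $x_1$ and integrably in $t$. Because $\langle u_1\rangle = 0$, the advective-plus-pressure part $\int_\T u_1(\tfrac12|u|^2 + p)\dd x_2$ is unchanged if one subtracts from $\tfrac12|u|^2 + p$ its vertical mean; Cauchy--Schwarz and the Poincar\'e inequality on $\T$ then dominate it by $\|u_1(x_1,\cdot)\|_{L^2(\T)}\,\|\partial_{x_2}(\tfrac12|u|^2 + p)\|_{L^2(\T)}$, which couples the flux to $\mathcal{D}^{1/2}$ through the factor $\partial_{x_2}u$. \textbf{The main obstacle is the pressure}: since $p$ is defined nonlocally by \eqref{pdef}, estimating the vertical fluctuation $p - \langle p\rangle$ and its contribution to $\FF$ requires elliptic bounds. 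I would proceed by vertical Fourier decomposition, where each nonzero mode obeys $(-\partial_{x_1}^2 + (2\pi k)^2)\widehat{p}_k = \widehat{g}_k$ with $g = \partial_i u_j\,\partial_j u_i$ quadratic in $\nabla u$; the spectral gap $(2\pi k)^2 \ge (2\pi)^2$ provides coercive control of $p - \langle p\rangle$ in terms of $u$ and $\nabla u$, and Sobolev interpolation together with the a priori bound \eqref{ubound1} then closes the flux estimate.

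Granting the flux bound, \eqref{ubound4} follows by integrating $\partial_t\EE + \partial_{x_1}\FF + \mathcal{D} = 0$ over $[-R,R]\times[0,T]$, which gives
\[
  \int_{-R}^R\EE(x_1,T)\dd x_1 + \int_0^T\!\!\int_{-R}^R\mathcal{D}\,\dd x_1\dd t \,=\, \int_{-R}^R\EE(x_1,0)\dd x_1 + \int_0^T\big(\FF(-R,t) - \FF(R,t)\big)\dd t~.
\]
The initial term is $\le R\,\|u_0\|_{L^\infty(\O)}^2$, giving the $R$ in \eqref{ubound4}, while the boundary fluxes, controlled by the previous step and Cauchy--Schwarz in time uniformly in the endpoints, give the $CT^{1/2}$. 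For \eqref{ubound3} I would rewrite the balance law as the scalar drift--diffusion inequality $\partial_t\EE - \partial_{x_1}^2\EE \le -\partial_{x_1}\int_\T u_1(\tfrac12|u|^2 + p)\dd x_2$ and run a sliding-window, uniformly local argument in the spirit of \cite{GS2}, which turns the flux control into the uniform-in-$x_1$, time-averaged energy bound $\sup_{x_1}\int_0^T 2\EE(x_1,t)\dd t \le CT$.

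Finally, for the convergence statement \eqref{ubound5} I would note that, for fixed $R$, estimate \eqref{ubound4} forces $\frac1T\int_0^T\!\!\int_{-R}^R\mathcal{D}\,\dd x_1\dd t \le \frac{C}{T}(R + T^{1/2}) \to 0$ as $T \to \infty$. Hence, outside a set of times of vanishing relative measure, the local dissipation is small, so $\nabla u(\cdot,t)$ is small in $L^2([-R,R]\times\T)$; combining this with $\langle u_1\rangle = 0$ (Poincar\'e in $x_2$ forces $u_1$ small) and Poincar\'e on $[-R,R]\times\T$ for $u_2$ places $u(\cdot,t)$ close, in $L^2([-R,R]\times\T)$, to some homogeneous equilibrium $(0,m)^\tsp$. \textbf{The second delicate point} is to upgrade this $L^2$ closeness to the uniform closeness demanded by \eqref{ubound5}, since $\|u(\cdot,t)\|_{L^\infty(\O)}$ may a priori grow: I would use interior parabolic regularity on the unit time-window ending at $t$, together with \eqref{ubound1}, to bound higher derivatives and interpolate $L^2$-smallness up to $L^\infty$-smallness, letting the sparse exceptional set absorb the times where these regularity estimates degrade. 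A compactness/contradiction formulation --- extracting from a hypothetical non-sparse family of bad times a limit that is at once a zero-dissipation equilibrium and $\epsilon$-far from every $(0,m)^\tsp$ --- makes the argument rigorous and completes the proof.
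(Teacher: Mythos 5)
Your starting point --- the fibrewise energy balance $\partial_t e + \partial_1 h = \partial_1^2 e - d$ and the plan to control the flux pointwise in $x_1$ --- is exactly the paper's, and you correctly single out the pressure term as the crux. But the way you propose to close the flux estimate is where the proposal breaks: you invoke ``Sobolev interpolation together with the a priori bound \eqref{ubound1}''. That bound grows like $(1+t)^{1/2}$, so any flux estimate obtained this way has the form $h^2 \le \beta(t)\,e\,d$ with $\beta(t) \sim (1+t)$, instead of the \emph{time-independent} constant that assumption (A2') requires. This is not a cosmetic loss: feeding a growing $\beta(T)$ through the flux/energy bootstrap (Proposition~\ref{p:localflux}, Lemma~\ref{l:L2first}, Proposition~\ref{p:L2energy}) makes $\kappa$ in \eqref{kappadef} of order $T$, so part 1) degenerates to $E_*(T) = O(T^2)$, part 2) to $O(R+T^{3/2})$, and the time-averaged dissipation no longer tends to zero, which destroys part 3) as well. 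The ingredient you never use --- and the one the whole paper is built on --- is the vorticity: $\omega$ obeys the maximum principle, so $\|\omega(\cdot,t)\|_{L^\infty(\O)} \le M$ for all $t$; the Biot--Savart law on the cylinder then bounds the oscillating part $\widehat u$ uniformly in time, and the identity $\div((u\cdot\nabla)u) = \Delta(u_1^2) + 2\partial_2(\omega u_1)$ gives the representation \eqref{pdef2}, whence $\|p(\cdot,t)\|_{L^\infty(\O)} \le C_2 M^2$ uniformly in time (Lemma~\ref{l:bounds}). Only the mean vertical flow $m = \langle u_2\rangle$ remains uncontrolled, and the algebra in Lemma~\ref{l:ehd} is arranged precisely so that $m$ enters the flux linearly, yielding $h^2 \le C_4(M)\,e\,d$ pointwise in $x_1$ with $C_4$ independent of $t$. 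Note also that this pointwise-in-$x_1$ structure is something your Fourier/elliptic treatment of the pressure cannot deliver: the vertical modes of $p$ solve ODEs on the whole line $\R$, so coercivity controls $p - \langle p\rangle$ only through a convolution in $x_1$ against quantities quadratic in $\nabla u$, not by $e(x_1)d(x_1)$ at the same point; the uniform $L^\infty$ bound on $p$ is exactly what bypasses this nonlocality. (Even your kinetic term has the wrong homogeneity: avoiding $\|u\|_{L^\infty}$ via $\|\partial_2(\tfrac12|u|^2)\|_{L^1(\T)} \le (2ed)^{1/2}$ gives $|h_1| \lesssim e^{1/2}d$, not $(ed)^{1/2}$, and the Riccati argument needs the latter.)

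Two further steps would also fail as written. For part 1), a heat-kernel/sliding-window argument ``in the spirit of \cite{GS2}'' is not available: the results of \cite{GS2} require the energy density to be uniformly bounded in time, which is precisely what is unknown here, and a Duhamel estimate for $\partial_t e \le \partial_1^2 e - \partial_1 h$ would need a supremum-in-$x_1$ control of $h$ that nothing provides. What replaces it in the paper is a genuine bootstrap: the Riccati-type differential inequality in $x$ for the integrated flux $F(x,T)$ (Proposition~\ref{p:localflux}), combined with the $L^2$-in-time energy bound made possible by assumption (A5) (Lemma~\ref{l:L2first}), closed by a quadratic inequality (Proposition~\ref{p:L2energy}); this is where the estimate $(\partial_1 e)^2 \le 2ed$ is essential. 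For part 3), your upgrade from $L^2$- to $L^\infty$-closeness via interior parabolic regularity plus \eqref{ubound1} fails quantitatively: the derivative bounds so obtained degrade polynomially at \emph{all} large times, not on a sparse set, so the dissipation threshold guaranteeing $\epsilon$-closeness in $L^\infty$ must shrink like a negative power of $t$, and the resulting Chebyshev exceptional set has measure $\gg T$. The paper's Lemma~\ref{l:dist} avoids this by interpolating the $L^2$-smallness of $\nabla u$ on $B_R$ against the \emph{time-uniform} $\BMO$ bound \eqref{nablauest} --- again a consequence of the vorticity bound --- together with $|\partial_1 m| = |\langle\omega\rangle| \le M$. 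In short, without the vorticity maximum principle your scheme cannot produce a single time-uniform constant, and all three estimates inherit that defect.
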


Before giving an idea of our general strategy, we briefly comment on
the results summarized in Theorem~\ref{thm3}. Estimate \eqref{ubound3}
shows that the ``kinetic energy'' $\frac12 \langle |u|^2 \rangle$,
computed at each point $x_1 \in \R$, behaves like a bounded quantity
when averaged over time. This already indicates that a bound like
\eqref{ubound1} or \eqref{ubound2} is necessarily pessimistic\: if the
quantity $\langle |u|^2 \rangle$ is not uniformly bounded in time, it
can reach large values only on a relatively small subset of the time
interval, otherwise \eqref{ubound3} would give a contradiction. In
particular $\langle |u|^2 \rangle$ cannot increase to infinity as $t
\to \infty$ at any fixed point $x_1 \in \R$. Estimate \eqref{ubound4}
contains even more information, and for simplicity we only comment on
the particular case where $R = T^{1/2}$, which is especially
instructive. We first learn from \eqref{ubound4} that the energy
$\frac12\langle |u|^2 \rangle$ computed at any time $T > 0$ behaves
like a bounded quantity when averaged over an interval of size
$T^{1/2}$ in the horizontal variable. As before, this indicates that,
for any fixed $T > 0$, the quantity $\langle |u|^2 \rangle$ can reach
large values only on relatively small spatial subdomains. Next,
Eq.~\eqref{ubound4} shows that the energy dissipation $\langle |\nabla
u|^2 \rangle$ converges to zero as $T \to \infty$ when averaged over a
horizontal interval of size $T^{1/2}$ and a time interval of size
$T$. This information is new and valuable, even for solutions for
which a uniform upper bound is known a priori. As a consequence of
\eqref{ubound4}, we immediately see that the only time-independent or
time-periodic solutions of the Navier-Stokes equations \eqref{NSeq},
\eqref{pdef} in $\BUC(\O)$ are spatially homogeneous equilibria of the
form $u = (m_1,m_2)^\tsp$, where $m_1, m_2 \in \R$ (of course, under
assumption \eqref{czero}, we have $m_1 = 0$). Finally, it follows from
\eqref{ubound4} that any solution of \eqref{NSeq}, \eqref{pdef} in
$\BUC(\O)$ converges uniformly on compact subdomains to this family of
equilibria, except perhaps on a sparse subset of the time axis. A
simple version of this last statement is given in \eqref{ubound5}, and
we refer to Corollary~\ref{c:time2} below for a more precise and
quantitative estimate.

Our analysis of the dynamics of the two-dimensional Navier-Stokes
equations \eqref{NSeq} is based on the following simple ideas.  If
$u(x,t), p(x,t)$ is a solution of \eqref{NSeq}, \eqref{pdef} given by
Theorem~\ref{thm1}, we introduce the {\em energy density} $e = \langle
\frac12 |u|^2\rangle + 1$, the {\em inviscid flux} $h = \langle
(\frac12 |u|^2 + p)u_1\rangle$, and the {\em energy dissipation rate}
$d = \langle \frac12 |\nabla u|^2\rangle$. More explicitly, for all
$x_1 \in \R$ and all $t > 0$, we define
\begin{align}\label{edef}
  e(x_1,t) \,&=\, \frac12 \int_\T |u(x_1,x_2,t)|^2 \dd x_2 + 1~, \\
  \label{bdef}
  h(x_1,t) \,&=\, \int_\T \Bigl(\frac12 |u(x_1,x_2,t)|^2 + 
  p(x_1,x_2,t)\Bigr)u_1(x_1,x_2,t)\dd x_2~, \\ \label{ddef}
  d(x_1,t) \,&=\, \int_\T |\nabla u(x_1,x_2,t)|^2 \dd x_2~,
\end{align}
where $|\nabla u|^2 = |\partial_1 u_1|^2 + |\partial_2 u_1|^2 +
|\partial_1 u_2|^2 + |\partial_2 u_2|^2$.  Here and in the sequel, we
denote $\partial_1 = \partial_{x_1}$ and $\partial_2 = \partial_{x_2}$
for simplicity. A straightforward calculation then shows that the
energy density satisfies $\partial_t e + \partial_1 h \,=\, 
\partial_1^2 e - d$ for all $x_1 \in \R$ and $t > 0$. In particular,
if we introduce the {\em total energy flux} $f = \partial_1 e - h$, 
we arrive at the energy balance equation
\begin{equation}\label{ebal}
  \partial_t e(x_1,t) \,=\, \partial_1 f(x_1,t) - d(x_1,t)~,
  \qquad x_1 \in \R~, \quad t > 0~,
\end{equation}
which is the starting point of our approach. 

At this point, we would like to mention that \eqref{ddef} is {\em not}
the usual definition of the energy dissipation rate that can be found
in textbooks of Fluid Mechanics. Indeed, energy is dissipated in
viscous fluids due to internal friction, and the rate of dissipation
is therefore proportional to $|D(u)|^2$ instead of $|\nabla u|^2$,
where $D(u) = \nabla u + (\nabla u)^\tsp$ is the strain rate tensor.
Albeit less natural from a physical point of view, the definition 
\eqref{ddef} seems nevertheless more convenient for our purposes, 
because the energy dissipation rates then controls all first-order 
derivatives of $u$. 

To exploit \eqref{ebal}, it is necessary to bound the energy flux $f$
in terms of $e$ and $d$. In Section~\ref{sec3} below we show that, for
any $t_0 > 0$, there exists a constant $C > 0$ such that 
\begin{equation}\label{hbound} 
  h(x_1,t)^2 \,\le\, C e(x_1,t) d(x_1,t)~, \qquad x_1 \in \R~, \quad 
  t \ge t_0~. 
\end{equation}
This simple bound is obtained under the assumption that $e \ge 1$, and 
this is why we added a constant to the kinetic energy in the definition 
\eqref{edef}. On the other hand, using \eqref{edef}, \eqref{ddef}
and the Cauchy-Schwarz inequality, we easily obtain $(\partial_1 e)^2 
\le 2ed$. Summarizing, there exists $\beta > 0$ such that
\begin{equation}\label{fbound} 
  f(x_1,t)^2 \,\le\, \beta e(x_1,t) d(x_1,t)~, \qquad  x_1 \in \R~, 
  \quad t \ge t_0~.
\end{equation}
This inequality is very important, because it shows that energy is 
necessarily dissipated in the system as soon as the flux $f$ 
is nonzero. More precisely, if we have an upper bound on the 
energy density $e$, then \eqref{fbound} allows to quantify how
much energy is dissipated during transport. 

In a recent paper \cite{GS2}, we introduced the notion of an {\em
  extended dissipative system} in a rather general framework.  Roughly
speaking, this is a system in which one can define an energy density
$e$, an energy flux $f$, and an energy dissipation rate $d$ satisfying
\eqref{fbound} and such that the energy balance \eqref{ebal} holds for
all solutions, see Section~\ref{sec3} below for more details. Under
these assumptions, we established in \cite{GS2} a few general results
which impose rather severe constraints to the dynamics of the
system. For instance, nontrivial time-periodic orbits cannot exist,
and any global solution with uniformly bounded energy density
converges, in a suitable localized and averaged sense, to the set of
equilibria.  Unfortunately, the results of \cite{GS2} do not apply
directly to the Navier-Stokes equations \eqref{NSeq}, \eqref{pdef},
because we do not know a priori if the energy density \eqref{edef}
stays uniformly bounded for all times, see the discussion near
Theorem~\ref{thm2} above. The purpose of the present paper is
precisely to extend the techniques developed in \cite{GS2} so as to
cover the important case of the Navier-Stokes equations. The main new
ingredient is the estimate $(\partial_1 e)^2 \le 2ed$, which holds in
the present case but was not included in our abstract definition of an
extended dissipative system (because it is not satisfied in some other
important examples).  When combined with \eqref{fbound}, this estimate
allows to obtain convergence results that are exactly as accurate as
those derived in \cite{GS2} for uniformly bounded solutions of general
extended dissipative systems.

The rest of this paper is organized as follows. In Section~\ref{sec2},
we briefly recall what is known about the Cauchy problem for the
Navier-Stokes equations \eqref{NSeq}, \eqref{pdef} in the space
$\BUC(\O)$, and we give a short proof of Theorem~\ref{thm1}. In
Section~\ref{sec3}, we show that the Navier-Stokes equations define an
extended dissipative system in the sense of \cite{GS2}, and we
establish the crucial estimate \eqref{fbound} using a uniform bound on
the vorticity $\omega = \partial_1 u_2 - \partial_2 u_1$. The main
part of our analysis begins in Sections~\ref{sec4} and \ref{sec5},
where we obtain (in an abstract framework) accurate estimates on the
integrated energy flux and the integrated energy density. Unlike in
\cite{GS2}, we do not have to assume here that the energy density is
uniformly bounded for all times; nevertheless, we can draw similar
conclusions concerning the long-time behavior of the solutions, some
of which are presented in Section~\ref{sec6}. The (rather delicate)
question of obtaining pointwise estimates on the energy density is
briefly discussed in Section~\ref{sec7}, which contains in particular
a proof of Theorem~\ref{thm2}. Finally, we show in Section~\ref{sec8}
that the solutions of the Navier-Stokes equations \eqref{NSeq},
\eqref{pdef} converge (in an appropriate sense) to spatially
homogeneous equilibria as $t \to \infty$, and we give a proof of
Theorem~\ref{thm3} which includes a much more precise version of
estimate \eqref{ubound5}.  The last section is an appendix which
collects the proofs of some auxiliary results stated in 
Section~\ref{sec2}. 

\medskip\noindent{\bf Acknowledgements.} The research of the
second named author was partially supported by the grant 
No 037-0372791-2803 of the Croatian Ministry of Science.

\section{The Navier-Stokes and vorticity equations in 
$\O = \R \times\T$}\label{sec2}

In this section, we study the Cauchy problem for the Navier-Stokes
equations \eqref{NSeq}, \eqref{pdef} in the cylinder $\O = \R \times\T$, 
and we establish a few general properties of the solutions that 
will be used later on. We do not claim for much originality at this
stage, because the results collected here are essentially taken 
from \cite{AM,GMS,GS2}. 

As was observed in the introduction, the Navier-Stokes equation can be
written in the form $\partial_t u + \P (u\cdot\nabla)u = \Delta u$,
where $\P$ is the Leray-Hopf projection. Given initial data $u_0$, the
corresponding integral equation reads
\begin{equation}\label{NSint}
  u(t) \,=\, e^{t\Delta}u_0 - \int_0^t \nabla\cdot e^{(t-s)\Delta}
  \P (u(s) \otimes u(s))\dd s~, \qquad t \ge 0~,
\end{equation}
where $u(t) = u(\cdot,t)$ and $\nabla\cdot e^{t\Delta}\P(u\otimes v)$ 
is a shorthand notation for the vector with $j^{\mathrm{th}}$ component
\begin{equation}\label{nlinexp}
  \Bigl(\nabla\cdot e^{t\Delta}\P(u\otimes v)\Bigr)_j \,=\, 
  \sum_{k,\ell = 1}^2 \partial_\ell \,e^{t\Delta}\P_{jk}u_\ell v_k~,
  \qquad j = 1,2~.
\end{equation}
It is well known that the heat kernel $e^{t\Delta}$ defines a strongly 
continuous semigroup of contractions in the space $\BUC(\O)$, see
e.g. \cite{ARCD}. Moreover, the following estimate allows to control 
the nonlinear term in \eqref{NSint}\:

\begin{lemma}\label{nlinlem}
There exists a constant $C_0 > 0$ such that, for all $t > 0$ and 
all $u,v \in \BUC(\O)$, one has $\nabla\cdot e^{t\Delta}\P(u\otimes v)
\in \BUC(\O)$ and
\begin{equation}\label{nlinest}
  \|\nabla\cdot e^{t\Delta}\P(u\otimes v)\|_{L^\infty(\O)} \,\le\, 
  \frac{C_0}{\sqrt{t}} \,\|u\|_{L^\infty(\O)}\|v\|_{L^\infty(\O)}~.
\end{equation}
\end{lemma}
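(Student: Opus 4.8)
The plan is to realize the operator $\nabla\cdot e^{t\Delta}\P$ as a finite matrix of scalar convolution operators on $\O$, to bound the $L^1$ norm of each convolution kernel by $C t^{-1/2}$, and then to invoke Young's inequality. Concretely, in view of \eqref{nlinexp}, each scalar component $\partial_\ell e^{t\Delta}\P_{jk}$ acts on the bounded function $u_\ell v_k$ by convolution with a kernel $G_{jk\ell}(\cdot,t)$ on $\O$. Granting the estimate $\|G_{jk\ell}(\cdot,t)\|_{L^1(\O)} \le C t^{-1/2}$, Young's inequality yields
\[
  \bigl\|\partial_\ell e^{t\Delta}\P_{jk}(u_\ell v_k)\bigr\|_{L^\infty(\O)} \,\le\,
  \frac{C}{\sqrt t}\,\|u_\ell v_k\|_{L^\infty(\O)} \,\le\,
  \frac{C}{\sqrt t}\,\|u\|_{L^\infty(\O)}\|v\|_{L^\infty(\O)}~,
\]
and summing over the finitely many indices $j,k,\ell$ gives \eqref{nlinest}. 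Moreover, since $u_\ell v_k \in \BUC(\O)$ and the convolution of a $\BUC$ function with an $L^1$ kernel is again in $\BUC(\O)$ (the modulus of continuity of the output is controlled by that of the input), the membership $\nabla\cdot e^{t\Delta}\P(u\otimes v)\in\BUC(\O)$ follows at the same time.

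It therefore remains to establish the kernel bound. I would first work on the whole plane $\R^2$, where $\partial_\ell e^{t\Delta}\P_{jk}$ is the Fourier multiplier with symbol $m_{jk\ell}(\xi,t) = i\xi_\ell(\delta_{jk} - \xi_j\xi_k/|\xi|^2)\,e^{-t|\xi|^2}$. A parabolic scaling $\eta = \sqrt t\,\xi$ shows that the corresponding kernel obeys $G^{\R^2}_{jk\ell}(x,t) = t^{-3/2}\Phi_{jk\ell}(x/\sqrt t)$, where $\Phi_{jk\ell}$ is the inverse Fourier transform of the rescaled symbol $\tilde m_{jk\ell}(\eta) = i\eta_\ell(\delta_{jk} - \eta_j\eta_k/|\eta|^2)\,e^{-|\eta|^2}$. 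Consequently $\|G^{\R^2}_{jk\ell}(\cdot,t)\|_{L^1(\R^2)} = t^{-1/2}\|\Phi_{jk\ell}\|_{L^1(\R^2)}$, so the whole matter reduces to the single claim that $\Phi_{jk\ell}\in L^1(\R^2)$.

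To prove this, I would invoke the classical decay estimates for the Oseen kernel. The symbol $\tilde m_{jk\ell}$ is smooth away from the origin, and all its derivatives inherit the Gaussian decay of the factor $e^{-|\eta|^2}$; near the origin the singular Riesz factor $\eta_j\eta_k/|\eta|^2$ is homogeneous of degree zero, hence bounded, and it is multiplied by the prefactor $\eta_\ell$, which vanishes there. Standard Fourier-analytic (Calder\'on--Zygmund) arguments then give a pointwise bound of the form $|\Phi_{jk\ell}(x)| \le C(1+|x|)^{-3}$, which is integrable at infinity in dimension two, while boundedness of $\tilde m_{jk\ell}$ ensures that $\Phi_{jk\ell}$ is bounded and therefore locally integrable; together these yield $\Phi_{jk\ell}\in L^1(\R^2)$. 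This is precisely the classical bound on the kernel of $\nabla\cdot e^{t\Delta}\P$ underlying the $L^\infty$ theory of \eqref{NSint}, cf.\ \cite{GMS,Ka}.

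Finally, I would transfer the estimate from the plane to the cylinder by periodization. The convolution kernel on $\O=\R\times\T$ is obtained from the whole-plane kernel by summing over the lattice $\Z$ in the vertical variable, $G_{jk\ell}(x_1,x_2,t) = \sum_{n\in\Z}G^{\R^2}_{jk\ell}(x_1,x_2+n,t)$, and the triangle inequality gives $\|G_{jk\ell}(\cdot,t)\|_{L^1(\O)} \le \|G^{\R^2}_{jk\ell}(\cdot,t)\|_{L^1(\R^2)} \le C t^{-1/2}$, which is the bound used at the outset. The step I expect to be the main obstacle is the decay estimate on $\Phi_{jk\ell}$: the gradient-of-heat-kernel contribution coming from the identity part of $\P$ is an explicit Gaussian and causes no difficulty, but the non-local Leray projection introduces the homogeneous-degree-zero singularity at $\xi=0$, whose inverse Fourier transform must be controlled by singular-integral estimates rather than by elementary computation.
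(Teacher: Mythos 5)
Your proposal is correct, but it takes a genuinely different route from the paper's proof. You follow the classical path of \cite{GMS,Ka}: realize each scalar component $\partial_\ell e^{t\Delta}\P_{jk}$ as convolution with a kernel, use parabolic scaling to reduce the bound $\|G_{jk\ell}(\cdot,t)\|_{L^1}\le Ct^{-1/2}$ to the single claim $\Phi_{jk\ell}\in L^1(\R^2)$, prove that claim via the Oseen-type pointwise decay $|\Phi_{jk\ell}(x)|\le C(1+|x|)^{-3}$ (which, as you correctly anticipate, is the hard step and requires a dyadic/Calder\'on--Zygmund argument to handle the degree-zero homogeneous singularity of the symbol at the frequency origin), and finally transfer to the cylinder by vertical periodization. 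The paper bypasses all of this singular-integral machinery with one algebraic trick: writing $\P_{jk}=\delta_{jk}+R_jR_k$ and using the identity $R_jR_k\Delta=-\partial_j\partial_k$, it expresses the nonlocal part as
\[
  \partial_\ell\, e^{t\Delta}R_jR_k w \,=\, \int_t^\infty
  \partial_j\partial_k\partial_\ell\, e^{s\Delta}w \dd s~,
\]
so that everything reduces to the elementary heat-semigroup estimates $\|\partial^\alpha e^{s\Delta}w\|_{L^\infty}\le C_\alpha s^{-|\alpha|/2}\|w\|_{L^\infty}$; integrating $Cs^{-3/2}$ over $[t,\infty)$ gives exactly $Ct^{-1/2}$, directly on $\O$, with no kernel decay estimates and no periodization. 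Your approach buys more information (a pointwise kernel bound, which is what the whole-plane $L^\infty$ theory actually rests on), at the price of outsourcing its key step to classical singular-integral theory; the paper's identity makes the lemma self-contained and essentially two lines long. One small imprecision in your write-up: boundedness of $\tilde m_{jk\ell}$ alone does not make $\Phi_{jk\ell}$ bounded; what you need (and do have) is $\tilde m_{jk\ell}\in L^1(\R^2)$, which holds because the symbol is $O(|\eta|)$ near the origin and Gaussian at infinity.
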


For the reader's convenience, we give a short proof of estimate 
\eqref{nlinest} in the Appendix. Using Lemma~\ref{nlinlem}
and a standard fixed point argument, one easily obtains the 
following local existence result. 

\begin{proposition}\label{localex} {\bf \cite{AM,GMS}}
For any initial data $u_0 \in \BUC(\O)$ with $\div u_0 = 0$, there 
exists a time $T > 0$ such that the integral equation \eqref{NSint}
has a unique solution $u \in C^0([0,T],\BUC(\O))$, which satisfies 
$t^{1/2}\partial_j u \in C^0_b((0,T],\BUC(\O))$ for $j = 1,2$. 
\end{proposition}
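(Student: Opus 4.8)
The plan is to solve the integral equation \eqref{NSint} by a contraction mapping argument in a Banach space that encodes both the $L^\infty$ bound and the parabolic gain of one derivative supplied by the heat semigroup. For $T > 0$, let $X_T$ denote the space of functions $u \in C^0([0,T],\BUC(\O))$ such that $t \mapsto t^{1/2}\partial_j u(t)$ extends to an element of $C^0_b((0,T],\BUC(\O))$ for $j = 1,2$, equipped with the norm $\|u\|_{X_T} = \sup_{0 < t \le T}\|u(t)\|_{L^\infty} + \sup_{0 < t \le T}\, t^{1/2}\sum_{j=1}^2 \|\partial_j u(t)\|_{L^\infty}$. On this space I define the map $\Phi(u)(t) = e^{t\Delta}u_0 - \int_0^t \nabla\cdot e^{(t-s)\Delta}\P(u(s)\otimes u(s))\dd s$, and the goal is to show that $\Phi$ is a contraction on a suitable closed ball $B_R = \{u \in X_T : \|u\|_{X_T} \le R\}$.

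For the self-mapping property, the linear term is controlled by the contraction property of the heat semigroup on $\BUC(\O)$, which gives $\|e^{t\Delta}u_0\|_{L^\infty} \le \|u_0\|_{L^\infty}$, while the standard smoothing estimate $\|\partial_j e^{t\Delta}g\|_{L^\infty} \le C t^{-1/2}\|g\|_{L^\infty}$ controls its weighted derivative. For the nonlinear term, Lemma~\ref{nlinlem} together with the elementary identity $\int_0^t (t-s)^{-1/2}\dd s = 2\sqrt{t}$ yields an $L^\infty$ bound of the form $2C_0\sqrt{T}\,\|u\|_{X_T}^2$. To bound the weighted derivative of the Duhamel term I would write $\partial_j e^{\tau\Delta} = (\partial_j e^{(\tau/2)\Delta})\,e^{(\tau/2)\Delta}$ and apply the derivative smoothing estimate to the output of \eqref{nlinest}, splitting the time integral at $s = t/2$ so that all exponents stay integrable; this again produces a bound of order $\sqrt{T}\,\|u\|_{X_T}^2$. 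Choosing $R$ equal to a fixed multiple of $\|u_0\|_{L^\infty}$ and $T$ small enough, $\Phi$ maps $B_R$ into itself.

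The contraction estimate follows from bilinearity: writing $u\otimes u - v\otimes v = u\otimes(u-v) + (u-v)\otimes v$ and repeating the above bounds gives $\|\Phi(u) - \Phi(v)\|_{X_T} \le C R\sqrt{T}\,\|u - v\|_{X_T}$, which is a strict contraction once $T$ is small relative to $R$. The Banach fixed point theorem then furnishes a unique $u \in B_R$ with $\Phi(u) = u$, and uniqueness in the full space $C^0([0,T],\BUC(\O))$, not merely within the ball, follows by a standard Gronwall argument comparing two solutions on a short time interval. The continuity of $t \mapsto u(t)$ at $t = 0$ and the stated behavior of $t^{1/2}\partial_j u(t)$ as $t \to 0^+$ come from the strong continuity of $e^{t\Delta}$ on $\BUC(\O)$ together with the $O(\sqrt{t})$ bounds on the nonlinear contributions.

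The main obstacle I anticipate is the treatment of the weighted derivative norm of the nonlinear term: the naive estimate for $\partial_j$ of the Duhamel integral produces a non-integrable singularity $(t-s)^{-1}$ at $s = t$, so one must split the integral and distribute the derivatives between the semigroup factor and the operator $\nabla\cdot e^{(t-s)\Delta}\P$ so that no exponent reaches or exceeds $1$. Everything else is routine, since the required mapping and contraction properties reduce, via Lemma~\ref{nlinlem} and the heat-kernel smoothing estimates, to convergent Beta-type integrals in the time variable.
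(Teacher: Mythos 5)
Your overall strategy (a Kato-type contraction directly in the weighted space $X_T$) is legitimate and genuinely different from the paper's: the paper runs the fixed point in the plain space $C^0([0,T],\BUC(\O))$ with only the sup-norm, using Lemma~\ref{nlinlem} and $\int_0^t (t-s)^{-1/2}\dd s = 2\sqrt{t}$, and obtains the regularity statement $t^{1/2}\partial_j u \in C^0_b((0,T],\BUC(\O))$ only afterwards, by a bootstrap ``as in \cite{GMS}''. However, the one step you yourself single out as the main obstacle is resolved incorrectly, and as written your self-mapping estimate for the weighted part of the norm fails. Writing $\partial_j e^{\tau\Delta} = (\partial_j e^{(\tau/2)\Delta})\,e^{(\tau/2)\Delta}$ and applying the derivative smoothing estimate to the output of \eqref{nlinest} places \emph{both} derivatives (the extra $\partial_j$ and the divergence) on heat semigroups evaluated at times comparable to $t-s$, and therefore inevitably produces the factor $(t-s)^{-1}$: the total power of $(t-s)$ is fixed by the total number of derivatives falling on the kernel, not by how they are split among factors of $e^{(t-s)\Delta}$. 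Splitting the integral at $s = t/2$ only helps on $[0,t/2]$, where $t-s \ge t/2$; on $[t/2,t]$ the singularity $(t-s)^{-1}$ remains non-integrable no matter how you ``distribute the derivatives between the semigroup factor and the operator $\nabla\cdot e^{(t-s)\Delta}\P$''. Note also that your estimates never use the weighted component of $\|u\|_{X_T}$, which is a structural red flag: if they were correct, the weighted space would be superfluous.

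The missing idea is that near $s = t$ the extra derivative must land on the argument $u(s)\otimes u(s)$ rather than on the kernel. Since $\partial_j$ commutes with $e^{(t-s)\Delta}$ and with $\P$ (the Riesz transforms are Fourier multipliers on $\O$), one writes $\partial_j \nabla\cdot e^{(t-s)\Delta}\P(u\otimes u) = \nabla\cdot e^{(t-s)\Delta}\P\,\partial_j(u\otimes u)$ and invokes precisely the weighted norm of $u$: $\|\partial_j(u(s)\otimes u(s))\|_{L^\infty} \le 2\|u(s)\|_{L^\infty}\|\partial_j u(s)\|_{L^\infty} \le 2 s^{-1/2}\|u\|_{X_T}^2$. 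Combined with \eqref{nlinest}, the integrand is then bounded by $2C_0 (t-s)^{-1/2} s^{-1/2}\|u\|_{X_T}^2$, whose time integral equals $2\pi C_0\|u\|_{X_T}^2$ (a Beta integral), so that $t^{1/2}\|\partial_j \Phi(u)(t)\|_{L^\infty} \le 2\pi C_0 \sqrt{T}\,\|u\|_{X_T}^2$, which is the bound of order $\sqrt{T}\,\|u\|_{X_T}^2$ you claimed; the same device (applied to $u\otimes(u-v) + (u-v)\otimes v$) repairs the contraction estimate. With this correction your argument closes and yields the stated regularity as part of the fixed point, which is arguably more self-contained than the paper's citation of \cite{GMS}; without it, the step you call routine is exactly where the proof breaks.
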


As in \cite{GMS}, one can also show that the solutions of
\eqref{NSint} are smooth and satisfy \eqref{NSeq}, \eqref{pdef} for
positive times. The proof of Theorem~\ref{localex}, which is
reproduced in the Appendix, gives a local existence time 
of the form $T = \OO(\|u_0 \|_{L^\infty}^{-2})$, so that any upper bound 
on $\|u_0\|_{L^\infty}$ provides a lower bound on $T$. In particular, all
solutions either blow up in finite time in $L^\infty$ norm, or can be
extended to the whole time axis $[0,+\infty)$. To rule out the first
scenario, the most efficient way is to consider the vorticity $\omega
= \curl u = \partial_1 u_2 - \partial_2 u_1$, which is well defined
for positive times by Proposition~\ref{localex} and evolves according
to the advection-diffusion equation
\begin{equation}\label{omeq}
  \partial_t\omega + u\cdot \nabla \omega  \,=\, \Delta \omega~.
\end{equation}
The parabolic maximum principle applies to \eqref{omeq}, hence
$\|\omega(\cdot,t)\|_{L^\infty}$ is a nonincreasing function of 
$t > 0$. This, however, does not imply that the velocity field
$u(\cdot,t)$ stays uniformly bounded for all times, because 
the Biot-Savart law does not allow to control the vertical 
average of $u$ in terms of $\omega$, as we now explain. 

Given any divergence-free velocity field $u \in \BUC(\O)$, we 
decompose $u = \langle u\rangle + \widehat u $, where $\langle 
u\rangle = \int_\T u\dd x_2$ denotes the vertical average of 
$u$. More explicitly, 
\begin{equation}\label{udecomp}
  u(x_1,x_2) \,=\, \begin{pmatrix} m_1(x_1) \\ m_2(x_1) 
  \end{pmatrix} + \begin{pmatrix} \widehat u_1(x_1,x_2) \\
  \widehat u_2(x_1,x_2)\end{pmatrix}~, \qquad 
  (x_1,x_2) \in \R \times \T~,
\end{equation}
where $m_j = \langle u_j\rangle$ for $j = 1,2$. The divergence-free
condition implies that $\div \langle u \rangle = \partial_1 m_1 = 0$
and $\div \widehat u = 0$. In particular, the mean horizontal flow
$m_1$ is a constant which (according to the discussion in the previous
section) can be set to zero without loss of generality. We thus
assume that $(m_1,m_2) = (0,m)$, where $m = \langle u_2\rangle$ is the
mean vertical flow. Taking the curl of \eqref{udecomp}, we also obtain
$\omega = \langle \omega\rangle + \widehat \omega $, where
$\langle\omega\rangle = \partial_1 m$ and $\widehat \omega
= \partial_1 \widehat u _2 - \partial_2 \widehat u _1$. Now, a direct
calculation which can be found in \cite{AM} shows that the oscillating
part $\widehat u$ of the velocity field is entirely determined by the
associated vorticity $\widehat \omega$. More precisely, we have the
Biot-Savart formula\:
\begin{equation}\label{BS}
  \widehat u(x_1,x_2) \,=\, \int_\R\int_\T \nabla^\perp K(x_1-y_1,x_2-y_2)
  \,\widehat \omega(y_1,y_2) \dd y_2 \dd y_1~, 
\end{equation}
where $\nabla^\perp = (-\partial_2,\partial_1)^\tsp$ and $K$ is the 
fundamental solution of the Laplace operator in $\O = \R \times \T$\:
\begin{equation}\label{Kdef}
  K(x_1,x_2) \,=\, \frac{1}{4\pi}\log\Bigl(2\cosh(2\pi x_1) 
  - 2\cos(2\pi x_2)\Bigr)~.
\end{equation}
In contrast, the mean vertical flow $m = \langle u_2\rangle$ cannot be
completely expressed in terms of the vorticity, and we only know that
$\partial_1 m = \langle \omega\rangle$.

For later use, we also give an explicit formula for the pressure
corresponding to the velocity field \eqref{udecomp} with $(m_1,m_2) =
(0,m)$. Note that \eqref{pdef} only defines $p$ up to a constant if $u
\in \BUC(\O)$, and it is necessary to fix that constant if we want 
to control $p$ in a space like $L^\infty(\O)$. In the Appendix,  
we shall show that $p$ can be taken as
\begin{equation}\label{pdef2}
  p \,=\, -u_1^2 - 2 \partial_2 K * (\omega \,u_1)~,
\end{equation}
where $*$ denotes the convolution product in $\O$, see \eqref{BS}. 
We then have the following result, whose proof is also postponed to
the Appendix. 

\begin{lemma}\label{l:bounds}
Assume that $u \in \BUC(\mathbb{O)}$ satisfies $\div u = 0$ and 
$\langle u_1 \rangle = 0$, and let $p$ be defined by \eqref{pdef2} 
where $\omega = \partial_1 u_2 - \partial_2 u_1$. If $\omega \in 
L^\infty(\O)$, we have 
\begin{equation}\label{upest}
  \|\widehat{u}\|_{L^{\infty }(\O)} \,\le\, C_1 \|\omega\|_{L^\infty(\O)}~,
  \qquad \|p\|_{L^{\infty }(\O)} \,\le\, C_2 \|\omega\|_{L^\infty(\O)}^2~,
\end{equation}
where $C_1, C_2$ are positive constants which do not depend on $u$.
Moreover, there exists $C_3 > 0$ such that 
\begin{equation}\label{nablauest}
 \|\nabla u\|_{\BMO(\O)} \,\le\, C_3 \|\omega\|_{L^\infty(\O)}~.
\end{equation}
\end{lemma}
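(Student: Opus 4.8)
The plan is to reduce all three estimates to $L^1$ and Calder\'on--Zygmund bounds for the Biot--Savart kernel $\nabla^\perp K$ and its derivatives, using the explicit formula \eqref{Kdef} together with the zero vertical average $\langle\widehat\omega\rangle = 0$. Throughout I will use that $\|\widehat\omega\|_{L^\infty(\O)} \le 2\|\omega\|_{L^\infty(\O)}$, which follows from $\widehat\omega = \omega - \langle\omega\rangle$ and $|\langle\omega\rangle(x_1)| \le \|\omega\|_{L^\infty(\O)}$, and that $u_1 = \widehat u_1$ because $\langle u_1\rangle = 0$ in \eqref{udecomp}. The first step is to record the behavior of the derivatives of $K$. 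Near the origin \eqref{Kdef} gives $K(\xi) = \frac{1}{2\pi}\log|\xi| + K_0(\xi)$ with $K_0$ smooth, so $\partial_1 K$ and $\partial_2 K$ have the (locally integrable) singularity $\OO(|\xi|^{-1})$, while the second derivatives $\partial_i\partial_j K$ carry the planar Calder\'on--Zygmund singularity $\OO(|\xi|^{-2})$. A direct computation from \eqref{Kdef} shows that $\partial_2 K$ and all the $\partial_i\partial_j K$ decay exponentially as $|x_1|\to\infty$, whereas $\partial_1 K(\xi)\to\frac12\,\mathrm{sgn}(\xi_1)$ does not decay. Hence $\partial_2 K\in L^1(\O)$, and $\partial_1 K = \frac12\,\mathrm{sgn}(\xi_1) + g$ with $g\in L^1(\O)$.

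The first estimate in \eqref{upest} then follows by writing $\widehat u = \nabla^\perp K * \widehat\omega = (-\partial_2 K * \widehat\omega,\ \partial_1 K * \widehat\omega)$ from \eqref{BS}. For $\widehat u_1$, Young's inequality gives $\|\widehat u_1\|_{L^\infty(\O)}\le \|\partial_2 K\|_{L^1(\O)}\|\widehat\omega\|_{L^\infty(\O)}$. For $\widehat u_2$ the non-decaying part of $\partial_1 K$ is killed by the cancellation $\int_\T\widehat\omega\dd x_2 = 0$: convolving $\frac12\,\mathrm{sgn}(\xi_1)$ against $\widehat\omega$ vanishes identically, leaving $\|\widehat u_2\|_{L^\infty(\O)}\le\|g\|_{L^1(\O)}\|\widehat\omega\|_{L^\infty(\O)}$. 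The pressure bound is then immediate from \eqref{pdef2}: since $u_1 = \widehat u_1$ one has $\|u_1^2\|_{L^\infty(\O)}\le C_1^2\|\omega\|_{L^\infty(\O)}^2$, while $\|\partial_2 K*(\omega u_1)\|_{L^\infty(\O)}\le\|\partial_2 K\|_{L^1(\O)}\|\omega\|_{L^\infty(\O)}\|u_1\|_{L^\infty(\O)}$, and the two contributions combine to give the second inequality in \eqref{upest}.

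For the $\BMO$ estimate I would split $\nabla u = \nabla\langle u\rangle + \nabla\widehat u$. The mean part has a single nonzero entry $\partial_1 m = \langle\omega\rangle$, bounded by $\|\omega\|_{L^\infty(\O)}$, hence in $L^\infty(\O)\subset\BMO(\O)$. For the oscillating part, differentiating \eqref{BS} shows that the entries of $\nabla\widehat u$ are convolutions of $\widehat\omega$ with the second derivatives $\partial_i\partial_j K$. By the first step these are Calder\'on--Zygmund kernels on $\O$: they carry the $\OO(|\xi|^{-2})$ singularity and the associated cancellation inherited from the planar log kernel, they decay exponentially at infinity, and the corresponding operators are bounded on $L^2(\O)$ (their symbols, computed by Fourier series in $x_2$ and Fourier transform in $x_1$, are homogeneous of degree zero). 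The classical $L^\infty\to\BMO$ theorem for such operators then gives $\|\nabla\widehat u\|_{\BMO(\O)}\le C\|\widehat\omega\|_{L^\infty(\O)}\le 2C\|\omega\|_{L^\infty(\O)}$, which yields \eqref{nablauest} with $C_3$ depending only on these kernel constants.

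I expect the last step to be the main obstacle: one must check carefully that $\partial_i\partial_j K$ genuinely satisfies the Calder\'on--Zygmund size, H\"ormander and cancellation conditions on the cylinder, and that $\BMO(\O)$ and the $L^\infty\to\BMO$ mapping property behave as in $\R^2$. This is where the geometry of $\O$ enters, since small balls see the planar structure (explaining the $|\xi|^{-2}$ singularity) while large balls see only the one noncompact direction, so the doubling dimension changes with scale. The cleanest route is either to invoke Calder\'on--Zygmund theory on spaces of homogeneous type directly, or to transfer the bound from $\R^2$ by regarding functions on $\O$ as $\Z$-periodic in $x_2$ and comparing the two $\BMO$ norms over balls of bounded radius, the exponential decay of the kernels at infinity making the large-scale contribution harmless.
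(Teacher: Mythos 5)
Your proposal is correct, and for the two estimates in \eqref{upest} it is essentially identical to the paper's argument: your splitting $\partial_1 K = \tfrac12\,\mathrm{sgn}(x_1) + g$ with $g \in L^1(\O)$, and the observation that the $\mathrm{sgn}$ part is annihilated by $\int_\T \widehat\omega \dd x_2 = 0$, is exactly the paper's replacement of $K$ by $\overline{K}(x_1,x_2) = K(x_1,x_2) - |x_1|/2$ in the Biot--Savart formula (note $g = \partial_1\overline{K}$), after which both proofs conclude by Young's inequality and $\|\widehat\omega\|_{L^\infty} \le 2\|\omega\|_{L^\infty}$; the pressure bound is the same term-by-term estimate of \eqref{pdef2}. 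The one place where you genuinely diverge is \eqref{nablauest}. The paper avoids all kernel analysis by using $\div u = 0$ and the definition of $\omega$ to write $\partial_1 u_1 = -\partial_2 u_2 = R_1R_2\omega$, $\partial_1 u_2 = -R_1^2\omega$, $\partial_2 u_1 = R_2^2\omega$, and then simply cites the classical $L^\infty \to \BMO$ boundedness of Riesz operators (Stein, Chapter IV); this treats the mean flow and the oscillating part in one stroke, since the identities hold for the full $u$ and $\omega$. You instead differentiate the Biot--Savart integral, handle $\partial_1 m = \langle\omega\rangle$ separately, and propose to verify the Calder\'on--Zygmund size, cancellation and H\"ormander conditions for $\partial_i\partial_j K$ on the cylinder (or to transfer from $\R^2$ by periodization). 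These are two guises of the same fact, since the kernel of $R_iR_j$ on $\O$ is, up to a multiple of the Dirac mass coming from $\partial_1^2 K + \partial_2^2 K = \delta$, precisely $-\partial_i\partial_j K$; your route is more hands-on and makes explicit what the citation hides (and your periodization remark is indeed the standard way to prove the cited fact on $\O$), but it costs you the extra bookkeeping of the delta term and the separate treatment of $m$, both of which you should state explicitly if you carry the plan out. Neither proof is fully self-contained at this point --- the paper leans on Stein exactly where you lean on ``classical Calder\'on--Zygmund theory'' --- so your level of rigor matches the paper's.
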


Here $\BMO(\O)$ denotes the space of functions with bounded mean 
oscillation on $\O$, which can be identified with the space of all  
$f \in \BMO(\R^2)$ that are $1$-periodic in the vertical direction,  
see \cite[Chapter IV]{St} for precise definitions. If $f \in \BMO(\O)$, 
then $\|f\|_{\BMO(\O)}  = \|\tilde f\|_{\BMO(\R^2)}$ where $\tilde f$ 
denotes the periodic extension of $f$ to the whole plane $\R^2$.  

We now return to the solutions of \eqref{NSeq} given by
Proposition~\ref{localex} and show that they cannot blow up in finite
time. Take $u_0 \in \BUC(\O)$ such that $\div u_0 = 0$ and $\langle
(u_0)_1\rangle = 0$, and let $T_* > 0$ be the maximal existence time
of the solution of \eqref{NSeq}, \eqref{pdef} with initial data $u_0$. 
As was already observed, the vorticity $\omega(\cdot,t)$ is uniformly 
bounded for all $t \in [t_0,T_*)$, if $t_0 > 0$.  Lemma~\ref{l:bounds} then 
shows that the horizontal speed $u_1 = \widehat u_1$ and the oscillating 
part $\widehat u_2$ of the vertical speed are uniformly bounded for all 
$t \in [0,T_*)$. Thus we only need to estimate the mean vertical flow 
$m(x_1,t)$, which satisfies the simple
equation
\begin{equation}\label{meq}   
  \partial_1 m + \partial_1 \langle \widehat u_1 \widehat u_2\rangle
  \,=\, \partial_1^2 m~, \qquad x_1 \in \R~, \quad 0 < t < T_*~.
\end{equation}
Indeed, using the identity $(u\cdot\nabla)u = \frac12 \nabla|u|^2 + 
u^\perp \omega$ and averaging over $x_2 \in \T$ the evolution equation 
for $u_2$ in \eqref{NSeq}, we easily obtain the equation $\partial_t m  + 
\langle u_1 \omega\rangle = \partial_1^2 m$, which is equivalent to 
\eqref{meq} since $\langle u_1 \omega\rangle = \langle \widehat u_1 
\widehat \omega\rangle = \langle \widehat u_1 \partial_1 \widehat u_2 
\rangle - \langle \widehat u_1 \partial_2 \widehat u_1 \rangle  
= \partial_1 \langle \widehat u_1 \widehat u_2\rangle$. Now, the 
integral equation corresponding to \eqref{meq} is
\[
  m(t) \,=\, e^{t\partial_1^2}m(0) - \int_0^t \partial_1 e^{(t-s)\partial_1^2}
  \langle \widehat u_1(s) \widehat u_2(s)\rangle \dd s~, \qquad 
  0 < t < T_*~,
\]
where $e^{t\partial_1^2}$ denotes the heat semigroup on $\R$. Since 
there exists $M > 0$ such that $\|\widehat u(t)\|_{L^\infty} \le M$
for all $t \in [0,T_*)$, we have
\[
  \|m(t)\|_{L^\infty} \,\le\, \|m(0)\|_{L^\infty} + \int_0^t 
  \frac{M^2}{\sqrt{\pi(t-s)}}\dd s \,=\, \|m(0)\|_{L^\infty}
  + \frac{2M^2\,t^{1/2}}{\sqrt{\pi}}~, \qquad 0 \le t < T_*~.
\]
This shows that $T_* = +\infty$, and that estimate \eqref{ubound1} 
holds for some $C > 0$. The proof of Theorem~\ref{thm1} is thus
complete. \QED

\section{The Navier-Stokes equation as an extended 
dissipative system}\label{sec3}

In a previous work \cite{GS2}, we introduced the notion of an extended
dissipative system, which is an abstract framework describing the
essential properties of an important class of dissipative partial
differential equations on unbounded domains. In this section, we show
that the Navier-Stokes equation \eqref{NSeq} belongs to that class, so
that interesting conclusions can be drawn for solutions that stay
uniformly bounded for all times. However, as was discussed in the
introduction, it is still unclear whether all solutions of
\eqref{NSeq} stay bounded, and without uniform bound on the energy the
results of \cite{GS2} do not give much information on the long-time
behavior.

We first recall the main definitions. If $X$ is a metrizable topological
space, we say that a family $(\Phi(t))_{t\ge0}$ of continuous maps 
$\Phi(t) : X \to X$ is a {\em continuous semiflow} on $X$ if\\[1mm]
\null\hskip 8pt $\bullet$ $\Phi(0) = \1$ (the identity map);\\[1mm]
\null\hskip 8pt $\bullet$ $\Phi(t_1+t_2) = \Phi(t_1)\circ\Phi(t_2)$ 
for all $t_1,t_2 \ge 0$;\\[1mm]
\null\hskip 8pt $\bullet$ For any $T > 0$, the map $(t,u) \mapsto 
\Phi(t)u$ is continuous from $[0,T]\times X$ to $X$.

\smallskip\noindent
In particular, given initial data $u_0 \in X$, the {\em trajectory} 
$u : \R_+ \to X$ defined by $u(t) = \Phi(t)u_0$ is a continuous function 
of the time $t \ge 0$,  and the solution $u(t)$ depends continuously 
on $u_0$, uniformly in time on compact intervals. If $\Phi(t)u_0 = u_0$ 
for all $t \ge 0$,  we say that $u_0$ is an {\em equilibrium}.

\begin{definition}\label{edsdef}
We say that a continuous semiflow $(\Phi (t))_{t\ge 0}$ on a metrizable 
space $X$ is an {\em extended dissipative system} on $\R$ if one can 
associate to each element $u\in X$ a triple $(e,f,d)$ with $e,d \in C^0(\R,
\R_+)$ and $f\in C^0(\R,\R)$ such that\\[1mm]
\null\hskip 8pt {\bf (A1)} The functions $e,f,d$ depend continuously on 
$u \in X$, uniformly on compact sets of $\R$;\\[1mm]
\null\hskip 8pt {\bf (A2)} $\,|f|^2 \le b(e)d$ for some nondecreasing 
function $b: \R_+ \to \R_+$;\\[2mm]
and such that, under the evolution defined by the 
semiflow $(\Phi(t))_{t\ge0}$, the time-dependent quantities 
$e,f,d$ have the following properties\:\\[2mm]
\null\hskip 8pt {\bf (A3)} If $d(x,t) = 0$ for all $(x,t) \in \R \times 
[0,t_0]$, where $t_0 > 0$, then $u$ is an equilibrium;\\[1mm]
\null\hskip 8pt {\bf (A4)} The energy balance $\partial_t e = 
\partial_x f - d$ holds in the sense of distributions on 
$\R \times \R_+$.
\end{definition}

\vskip 1pt
\begin{remark}\label{abuse}
As in \cite{GS2} there is a slight abuse of notation in the
definition above. To any state of the system, namely to any point $u
\in X$, we associate an energy density $e(x) \ge 0$, an energy flux
$f(x) \in \R$, and an energy dissipation rate $d(x) \ge 0$, which
are continuous functions of $x \in \R$ and satisfy properties (A1),
(A2). In addition, given any $t \ge 0$, we associate to the 
evolved state $\Phi(t)u \in X$ an energy density $e(x,t) \ge 0$, an 
energy flux $f(x,t) \in \R$, and an energy dissipation rate $
d(x,t) \ge 0$, and these are the time-dependent quantities that 
satisfy (A3), (A4). For simplicity, we will use the same notation 
$e,f,d$ in both cases, although the quantities that evolve according 
to the semiflow $\Phi(t)$ depend on an additional variable $t \ge 0$. 
\end{remark}

\begin{remark}\label{1D}
Unlike in \cite{GS2}, where definitions are given in full generality, 
we only consider here an extended dissipative system on the real 
line $\R$. This is because we want to study the Navier-Stokes 
equation in the cylinder $\O$, which has only one unbounded 
direction, so that we indeed obtain a one-dimensional extended 
dissipative system if we consider the energy flow through vertical 
sections of the cylinder, as in \eqref{edef}--\eqref{ddef}.
\end{remark} 

We next introduce a functional-analytic framework that is appropriate
for the Navier-Stokes equation \eqref{NSeq}.  Let $X$ denote the
Banach space
\begin{equation}\label{Xdef}
  X \,=\, \Bigl\{u \in \BUC(\O)\,\Big|\, \partial_j u \in \BUC(\O)
  \hbox{ for } j = 1,2\,,~ \div u = 0\,,~\langle u_1\rangle = 0\Bigr\}~,
\end{equation}
equipped with the norm $\|u\|_X \,=\, \max(\|u\|_{L^\infty(\O)},
\|\nabla u\|_{L^\infty(\O)})$. We recall that $\langle u_1\rangle$ is
the vertical average of the horizontal velocity, see \eqref{czero}.
Proceeding as in the proof of Theorem~\ref{thm1}, it is
straightforward to verify that the Cauchy problem for Eq.~\eqref{NSeq}
is globally well-posed in $X$. More precisely, for any $u_0 \in X$,
the integral equation \eqref{NSint} has a unique global solution $u
\in C^0([0,\infty),X)$, which depends continuously on the initial data
$u_0$ in the topology of $X$, uniformly in time on compact
intervals. In other words, the Navier-Stokes equation defines a
continuous semiflow $(\Phi (t))_{t\ge 0}$ on $X$.  Given a constant 
$M > 0$, we also consider the subset
\begin{equation}\label{XMdef}
  X_M \,=\, \Bigl\{u \in X\,\Big|\, |\curl u| = |\partial_1 u_2 - 
  \partial_2 u_1| \le M\Bigr\} \,\subset\, X~,
\end{equation}
which is invariant under the semiflow $\Phi(t)$ since the vorticity
$\omega = \curl u$ obeys the parabolic maximum principle. Note that
$X_M$ is an {\em unbounded} subset of $X$, because (as was discussed
in the previous section) a bound on the vorticity allows to control
the oscillating part $\widehat u$ of the velocity field, but not the
average vertical velocity $m = \langle u_2\rangle$.

For any $u \in X_M$, we define as in \eqref{edef}--\eqref{ddef}\:
\begin{align}\nonumber
  e(x_1) \,&=\, \frac12 \int_\T |u(x_1,x_2)|^2 \dd x_2 + 1~, \\ \label{defehd}
  h(x_1) \,&=\, \int_\T \Bigl(\frac12 |u(x_1,x_2)|^2 + 
  p(x_1,x_2)\Bigr)u_1(x_1,x_2)\dd x_2~, \\ \nonumber
  d(x_1) \,&=\, \int_\T |\nabla u(x_1,x_2)|^2 \dd x_2~,
\end{align}
where $p$ is given by \eqref{pdef2}. We also denote $f(x_1) = 
\partial_1 e(x_1) - h(x_1)$. 

\begin{lemma}\label{l:ehd}
Assume that $u \in X_M$ for some $M > 0$, and let $e,h,d$ be defined 
by \eqref{defehd}. Then there exists a constant $C_4 > 0$, 
depending only on $M$, such that
\begin{equation}\label{hebounds}
  h(x_1)^2 \,\le\, C_4 e(x_1) d(x_1)~, \qquad |\partial_1 e(x_1)|^2 
  \,\le\, 2 e(x_1) d(x_1)~,
\end{equation}
for all $x_1 \in \R$. 
\end{lemma}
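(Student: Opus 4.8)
The second inequality in \eqref{hebounds} is elementary, and I would dispatch it first. Differentiating the definition of $e$ under the integral sign gives $\partial_1 e(x_1) = \int_\T u\cdot\partial_1 u\dd x_2$, so the Cauchy--Schwarz inequality yields $|\partial_1 e|^2 \le \bigl(\int_\T |u|^2\dd x_2\bigr)\bigl(\int_\T |\partial_1 u|^2\dd x_2\bigr)$. Since $\int_\T |u|^2\dd x_2 = 2(e-1) \le 2e$ and $\int_\T |\partial_1 u|^2\dd x_2 \le d$, the bound $|\partial_1 e|^2 \le 2ed$ follows at once, with no restriction on $M$.

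The first inequality is the substantial one, and the plan is to exploit the structural information collected in Section~\ref{sec2}. Because $\langle u_1\rangle = 0$ we have $u_1 = \widehat u_1$, while $u_2 = m + \widehat u_2$ with $m = \langle u_2\rangle$. Since $u \in X_M$ we have $\|\omega\|_{L^\infty} \le M$, so Lemma~\ref{l:bounds} provides the uniform bounds $\|u_1\|_{L^\infty(\O)}, \|\widehat u_2\|_{L^\infty(\O)} \le C_1 M$ and $\|p\|_{L^\infty(\O)} \le C_2 M^2$. The only quantity not controlled pointwise is the mean vertical flow $m$, which may be arbitrarily large; the whole difficulty is to absorb its contribution into the factor $e$ on the right-hand side.

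The main tool for $u_1$ is the Poincar\'e inequality on $\T$. As $u_1(x_1,\cdot)$ has zero average on $\T = \R/\Z$ for each fixed $x_1$, we get $\int_\T u_1^2\dd x_2 \le \frac{1}{4\pi^2}\int_\T |\partial_2 u_1|^2\dd x_2 \le \frac{1}{4\pi^2}d$. Combining this with the trivial bound $\int_\T u_1^2\dd x_2 \le 2e$ gives the interpolated estimate $\int_\T u_1^2\dd x_2 \le C(ed)^{1/2}$, which has exactly the homogeneity needed to close the argument. I would then substitute $\frac12|u|^2 + p = \frac12 u_1^2 + \frac12 u_2^2 + p$ into the definition of $h$ and expand $u_2^2 = m^2 + 2m\widehat u_2 + \widehat u_2^2$. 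The crucial cancellation is that the leading term $\frac12 m^2\int_\T u_1\dd x_2 = \frac12 m^2\langle u_1\rangle$ vanishes identically, so no uncontrolled power of $m$ survives. What remains is a sum of terms, each of which I expect to bound by $C(M)\,(ed)^{1/2}$: the cubic term $\frac12\int_\T u_1^3$ via $\|u_1\|_{L^\infty}$ and the interpolated estimate; the linear-in-$m$ term $m\int_\T \widehat u_2 u_1$ via $|m| \le (2e)^{1/2}$ together with $\int_\T|u_1|\dd x_2 \le (2\pi)^{-1}d^{1/2}$; and the terms $\frac12\int_\T \widehat u_2^2 u_1$ and $\int_\T p u_1$ via the $L^\infty$ bounds on $\widehat u_2$ and $p$, the Cauchy--Schwarz/Poincar\'e bound $\int_\T|u_1|\dd x_2 \le (2\pi)^{-1}d^{1/2}$, and the elementary fact $d^{1/2} \le (ed)^{1/2}$ (valid since $e \ge 1$). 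Squaring the resulting estimate $|h| \le C(ed)^{1/2}$ gives the first inequality in \eqref{hebounds} with $C_4 = C^2$ depending only on $M$.

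The step I expect to be the main obstacle --- or at least the one requiring the most care --- is the treatment of the mean vertical flow $m$. One must verify that the only term of order $m^2$ cancels by virtue of $\langle u_1\rangle = 0$, and that the surviving term of order $m$ is paid for by the growth $e \gtrsim m^2$ of the energy density, precisely through $|m| \le (2e)^{1/2}$. This is exactly where the normalization $\langle u_1\rangle = 0$ built into the space $X$ and the additive constant $1$ in the definition of $e$ both become essential.
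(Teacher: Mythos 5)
Your proof is correct and follows essentially the same route as the paper's: Cauchy--Schwarz for the $\partial_1 e$ bound, and for $h$ the same three ingredients --- the cancellation of the $m^2$ term via $\langle u_1\rangle=0$, the Poincar\'e--Wirtinger inequality $\int_\T u_1^2\dd x_2 \le d/(4\pi^2)$, and the $L^\infty$ bounds on $\widehat u$ and $p$ from Lemma~\ref{l:bounds} --- with the only cosmetic difference that you pay for the linear-in-$m$ term with the pointwise bound $|m|\le(2e)^{1/2}$, whereas the paper regroups $m\widehat u_2$ into $u_2\widehat u_2$ and applies Cauchy--Schwarz with $\int_\T u_2^2\dd x_2\le 2e$.
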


\begin{proof}
If $u \in X_M$, then $\langle u_1 \rangle = \int_\T u_1(x_1,x_2)\dd x_2 
= 0$, and the Poincar\'e-Wirtinger inequality implies
\begin{equation}\label{PW}
  \int_\T |u_1(x_1,x_2)|^2\dd x_2 \,\le\, \frac{1}{4\pi^2}\int_\T 
  |\partial_2 u_1(x_1,x_2)|^2 \dd x_2 \,\le\, \frac{d(x_1)}{4\pi^2}~. 
\end{equation}
To prove \eqref{hebounds}, we write $h(x_1) = h_1(x_1) + h_2(x_1)$, 
where
\[
  h_1(x_1) \,=\, \frac12 \int_\T |u(x_1,x_2)|^2u_1(x_1,x_2) 
  \dd x_2~, \qquad 
  h_2(x_1) \,=\, \int_\T p(x_1,x_2)u_1(x_1,x_2)\dd x_2~. 
\]
Since $u_2 = m + \widehat u_2$, where $m = \langle u_2 \rangle$, we 
have $|u|^2 = u_1^2 + m^2 + 2m\widehat u_2 + \widehat u_2^2$, hence
\[
  h_1 \,=\, \frac12 \int_\T \Bigl(u_1^2 + 2m\widehat u_2 + 
  \widehat u_2^2\Bigr)u_1 \dd x_2 \,=\, \frac12 \int_\T \Bigl(u_1^2 
  - \widehat u_2^2 + 2u_2\widehat u_2\Bigr)u_1 \dd x_2~.
\]
Using Lemma~\ref{l:bounds}, H\"older's inequality, and Poincar\'e's 
inequality \eqref{PW}, we thus obtain
\begin{align*}
  |h_1| \,&\le\, CM \int_\T (|u_1| + |u_2|)|u_1| \dd x_2 \,\le\,  
  CM \Bigl(\int_\T |u|^2 \dd x_2\Bigr)^{1/2} \Bigl(\int_\T u_1^2\dd 
  x_2\Bigr)^{1/2} \,\le\, CM (ed)^{1/2}~, \\
  |h_2| \,&\le\, CM^2 \int_\T |u_1| \dd x_2 \,\le\,  CM^2 \Bigl(\int_\T 
  u_1^2 \dd x_2\Bigr)^{1/2} \,\le\, CM^2 d^{1/2} \,\le\, CM^2 (ed)^{1/2}~. 
\end{align*}
In the last inequality, we used the fact that $e(x_1) \ge 1$ for 
all $x_1 \in \R$. Finally, 
\[
  |\partial_1 e| \,\le\, \int_\T |u_1\partial_1 u_1 +  u_2\partial_1 
  u_2| \dd x_2 \,\le\, \Bigl(\int_\T |u|^2\dd x_2\Bigr)^{1/2}
  \Bigl(\int_\T |\nabla u|^2\dd x_2\Bigr)^{1/2}  \,\le\, (2ed)^{1/2}~.
\]
\end{proof}

As a direct consequence of Lemma~\ref{l:ehd}, we obtain

\begin{proposition}\label{NSeds}
The Navier-Stokes equations \eqref{NSeq}, \eqref{pdef} define an extended 
dissipative system in the space $X_M$ for any $M > 0$. More precisely,
the triple $(e,f,d)$ defined by \eqref{defehd} and by the relation 
$f = \partial_1 e - h$ satisfies assumptions (A1), (A3), (A4)
of Definition~\ref{edsdef}, as well as\\[1mm]
\null\hskip 8pt {\bf (A2')} $\,|f|^2 \le \beta ed$ for some positive 
  constant $\beta$ depending only on $M$;\\[1mm]
\null\hskip 8pt {\bf (A5)} $\,\,|\partial_1 e|^2 \le \gamma ed$ for 
  some positive constant $\gamma$. 
\end{proposition}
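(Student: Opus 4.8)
The plan is to verify the five properties one at a time, noting that the two quadratic inequalities (A2') and (A5) are immediate corollaries of Lemma~\ref{l:ehd}, whereas the energy balance (A4) rests on the local energy identity and the continuity (A1) and rigidity (A3) properties follow from the structure of the system. I would begin with the easy points. Property (A5) is literally the second estimate in \eqref{hebounds}, so one may take $\gamma = 2$. For (A2'), since $f = \partial_1 e - h$, the elementary bound $|f|^2 \le 2|\partial_1 e|^2 + 2h^2$ combined with both inequalities of \eqref{hebounds} yields $|f|^2 \le (4 + 2C_4)\,ed$; as $C_4$ depends only on $M$, the constant $\beta = 4 + 2C_4$ has the required dependence.

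For the energy balance (A4), I would take the scalar product of \eqref{NSeq} with $u$ and use $\div u = 0$ to rewrite the transport and pressure terms as divergences, together with the identity $u\cdot\Delta u = \Delta(\tfrac12|u|^2) - |\nabla u|^2$. This gives the pointwise local energy identity $\partial_t(\tfrac12|u|^2) + \div\bigl((\tfrac12|u|^2 + p)u\bigr) = \Delta(\tfrac12|u|^2) - |\nabla u|^2$. Averaging over the vertical variable $x_2 \in \T$ makes every $\partial_2$-derivative integrate to zero by periodicity, so that only the horizontal derivatives survive; recalling the definitions \eqref{defehd} one obtains $\partial_t e = \partial_1^2 e - \partial_1 h - d = \partial_1 f - d$. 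Since the solutions of Proposition~\ref{localex} are smooth for $t > 0$, this identity holds classically there, and hence in the distributional sense required by (A4).

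Property (A3) is a short rigidity argument. Because $u \in X$ has $\nabla u \in \BUC(\O)$, the dissipation $d(x_1,t) = \int_\T |\nabla u|^2\dd x_2$ is a genuine continuous function, and the hypothesis $d \equiv 0$ on $\R \times [0,t_0]$ forces $\nabla u(\cdot,t) \equiv 0$ there, i.e.\ $u(\cdot,t)$ is spatially homogeneous. The constraint $\langle u_1\rangle = 0$ then gives $u(\cdot,t) = (0,m(t))^\tsp$, and feeding this into the evolution equation \eqref{meq} for $m$ kills every spatial term (both the advective flux and $\partial_1^2 m$ vanish), leaving $\partial_t m = 0$; thus $m$ is constant on $[0,t_0]$ and $u_0 = (0,m_0)^\tsp$ is a spatially homogeneous state. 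Such a state is an equilibrium of \eqref{NSeq}, so by uniqueness (Theorem~\ref{thm1}) we conclude $\Phi(t)u_0 = u_0$ for all $t \ge 0$.

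The main obstacle is (A1), the continuous dependence of $(e,f,d)$ on $u$, uniformly on compact sets of $x_1$. For $e$ and $d$, and for the derivative $\partial_1 e$, this is routine: each is an integral over $\T$ of a polynomial expression in $u$ and $\nabla u$, so convergence in the $X$-norm---which controls $\|u\|_{L^\infty(\O)}$ and $\|\nabla u\|_{L^\infty(\O)}$, hence $\|\omega\|_{L^\infty(\O)}$---produces uniform convergence of the integrands and thus of the functions themselves. The delicate ingredient is the flux term $h$, which depends on $u$ through the nonlocal pressure \eqref{pdef2}. Here I would exploit the bilinear structure: the map $u \mapsto p = -u_1^2 - 2\partial_2 K *(\omega\,u_1)$ is quadratic, and the estimates underlying Lemma~\ref{l:bounds} bound the associated bilinear operator. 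A bounded bilinear map is locally Lipschitz, so a small perturbation of $u$ in $X$ induces a small perturbation of $p$ in $L^\infty(\O)$; continuity of $h = \int_\T(\tfrac12|u|^2 + p)u_1\dd x_2$, and hence of $f = \partial_1 e - h$, follows. This establishes (A1) and completes the verification that the triple $(e,f,d)$ makes the Navier--Stokes semiflow an extended dissipative system on each $X_M$.
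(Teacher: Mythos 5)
Your proposal is correct and takes essentially the same approach as the paper: (A2') and (A5) are read off from Lemma~\ref{l:ehd} exactly as intended (the paper presents the proposition as a direct consequence of that lemma), and (A4) is the vertically averaged local energy identity already sketched in the paper's introduction. Your explicit verifications of (A1) and (A3), which the paper leaves implicit, are sound as written.
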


We are now in position to apply the results of \cite{GS2}. If 
$u(x_1,x_2,t)$ is a solution of \eqref{NSeq}, \eqref{pdef} with 
initial data $u_0 \in X_M$, we define for all $T \ge 0$\:
\begin{equation}\label{estardef}
  e_*(T) \,=\, \sup_{x_1 \in \R} e(x_1,T)~, \qquad
  \bar e_*(T) \,=\, \sup_{0 \le t \le T} e_*(t)~,
\end{equation}
where $e(x_1,t)$ is the energy density \eqref{edef}. With these 
notations we have

\begin{corollary}\label{NSedsres} {\bf \cite{GS2}}
If $u_0 \in X_M$ for some $M > 0$, the solution of \eqref{NSeq}, 
\eqref{pdef} with initial data $u_0$ satisfies, for all $x_1 \in \R$ and 
all $T > 0$, 
\begin{equation}\label{NSeds1}
  \Bigl|\int_0^T f(x_1,t)\dd t\Bigr| \,\le\, \sqrt{\beta T 
  \bar e_*(T)e_*(0)}~,
\end{equation}
where $\beta  > 0$ is as in Proposition~\ref{NSeds}. Moreover, 
for all $R > 0$, 
\begin{equation}\label{NSeds2}
  \int_0^T \int_{|x_1| \le R} d(x_1,t)\dd x_1 \dd t \,\le\, 
  2 \sqrt{\beta T\bar e_*(T)e_*(0)} + 2 R e_*(0)~.
\end{equation}
In particular, if $e_*(t)$ is uniformly bounded for all times, 
we have
\begin{equation}\label{NSeds3}
  \limsup_{T \to \infty} \frac{1}{\sqrt{\beta T}} \int_0^T \int_{|x_1| 
  \le \sqrt{\beta T}} d(x_1,t)\dd x_1 \dd t \,\le\, 4 \sqrt{e_*(0) 
  \bar e_*(\infty)}~.
\end{equation}
\end{corollary}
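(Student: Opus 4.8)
The plan is to deduce all three estimates from two relations that hold pointwise in $x_1$, obtained by integrating the energy balance (A4) over the time interval $[0,T]$. Fix $T>0$ and abbreviate
$$
  G(\xi)\,=\,\int_0^T f(\xi,t)\dd t~,\qquad
  D(\xi)\,=\,\int_0^T d(\xi,t)\dd t\,\ge\,0~.
$$
Integrating the balance $\partial_t e=\partial_x f-d$ in $t$ yields the first relation $G'(\xi)=e(\xi,T)-e(\xi,0)+D(\xi)$; the distributional identity makes sense because, by (A1), $e,f,d$ are continuous, so $G$ is in fact $C^1$. The flux bound (A2$'$), the Cauchy--Schwarz inequality, and the definition \eqref{estardef} of $\bar e_*(T)$ give the second relation
$$
  G(\xi)^2\,\le\,\beta\Bigl(\int_0^T e(\xi,t)\dd t\Bigr)D(\xi)
  \,\le\,\beta T\,\bar e_*(T)\,D(\xi)~.
$$
Writing $A=\beta T\,\bar e_*(T)$, these read $G'=e(\cdot,T)-e(\cdot,0)+D$ and $D\ge G^2/A$.

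First I would prove \eqref{NSeds1}, namely $|G(\xi)|\le\sqrt{A\,e_*(0)}$; this is the main obstacle. Suppose, for contradiction, that $G(\xi_0)>\sqrt{A\,e_*(0)}$ at some point $\xi_0$. Using $e(\xi,T)\ge 0$, $e(\xi,0)\le e_*(0)$ and $D(\xi)\ge G(\xi)^2/A$ in the first relation, I obtain
$$
  G'(\xi)\,\ge\,\frac{G(\xi)^2}{A}-e_*(0)\qquad
  \text{whenever }\ G(\xi)>\sqrt{A\,e_*(0)}~.
$$
At $\xi_0$ the right-hand side is strictly positive, so $G$ is increasing there and therefore stays above the threshold $\sqrt{A\,e_*(0)}$ for all $\xi\ge\xi_0$; consequently the displayed differential inequality holds on the whole half-line $[\xi_0,\infty)$. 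Comparison with the Riccati equation $g'=g^2/A-e_*(0)$ then forces $G$ to blow up at a finite value of $\xi$, which contradicts the fact that $G$ is finite and continuous on all of $\R$ (again by (A1)). The bound $G(\xi)\ge-\sqrt{A\,e_*(0)}$ is obtained by the same argument run as $\xi$ decreases from a point where $G$ drops below $-\sqrt{A\,e_*(0)}$: the very same relations now force $G\to-\infty$ at a finite $\xi$. Together these prove \eqref{NSeds1}.

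The remaining two estimates are then routine. Integrating (A4) over $[-R,R]\times[0,T]$ and rearranging gives
$$
  \int_0^T\!\!\int_{-R}^R d\,\dd x_1\dd t
  \,=\,\int_{-R}^R\!\bigl(e(x_1,0)-e(x_1,T)\bigr)\dd x_1+G(R)-G(-R)~.
$$
Dropping the nonnegative term $\int_{-R}^R e(x_1,T)\dd x_1$, bounding $\int_{-R}^R e(x_1,0)\dd x_1\le 2R\,e_*(0)$, and applying \eqref{NSeds1} to $G(R)$ and $G(-R)$ yields \eqref{NSeds2}. Finally, for \eqref{NSeds3} I would set $R=\sqrt{\beta T}$ in \eqref{NSeds2}, divide by $\sqrt{\beta T}$, and let $T\to\infty$ using that $\bar e_*(T)$ increases to $\bar e_*(\infty)$; this bounds the $\limsup$ by $2e_*(0)+2\sqrt{e_*(0)\,\bar e_*(\infty)}$. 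Since $\bar e_*(\infty)\ge e_*(0)$ by \eqref{estardef}, one has $2e_*(0)\le 2\sqrt{e_*(0)\,\bar e_*(\infty)}$, so the two terms combine into $4\sqrt{e_*(0)\,\bar e_*(\infty)}$, which is exactly \eqref{NSeds3}.
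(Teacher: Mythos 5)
Your proof is correct, and it is built on exactly the two relations that drive the paper's treatment: time-integration of the energy balance (A4), giving $\partial_x F = e(\cdot,T)-e(\cdot,0)+D$, and assumption (A2$'$) combined with the Cauchy--Schwarz inequality, giving $F^2 \le \beta T \bar e_*(T)\, D$. These are precisely \eqref{F2} and \eqref{F1} in the proof of Proposition~\ref{p:localflux}; note that the corollary itself is quoted from \cite{GS2}, and the closest in-paper argument is Proposition~\ref{p:localflux} together with Corollary~\ref{c:fluxbound} and the observation $E_*(T)\le T\bar e_*(T)$. The only genuine difference lies in how the Riccati-type inequality \eqref{r:diffin} is exploited. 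The paper integrates it explicitly between two points $a<b$, which yields the quantitative local bound \eqref{r:left} with the extra term $\beta E_*/(b-a)$, and then recovers the global bound \eqref{fluxbound} by letting $b\to\infty$; you instead run a qualitative comparison argument on a half-line: once $F$ exceeds the threshold $\sqrt{\beta T\bar e_*(T) e_*(0)}$ it is trapped above it, is a supersolution of a Riccati equation, and must blow up at a finite $x$, contradicting finiteness of the integrated flux. Both are sound; the paper's version buys the local estimate of Proposition~\ref{p:localflux}, which controls the flux at a point using only the energy in a neighborhood and is needed elsewhere, whereas your contradiction argument is shorter if one only wants the global bounds \eqref{NSeds1}--\eqref{NSeds3}. (A small remark: the inequality $G'\ge G^2/A - e_*(0)$ actually holds for \emph{all} $\xi$, not only where $G$ exceeds the threshold, so your restriction is unnecessary though harmless.) Your derivations of \eqref{NSeds2} and \eqref{NSeds3} --- energy balance over $[-R,R]\times[0,T]$ plus \eqref{NSeds1}, then $R=\sqrt{\beta T}$ and the absorption $2e_*(0)\le 2\sqrt{e_*(0)\bar e_*(\infty)}$ --- are the standard ones and mirror the structure of Proposition~\ref{p:dissip}.
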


The weakness of Corollary~\ref{NSedsres} lies in the fact that
the right-hand side of inequalities \eqref{NSeds1}--\eqref{NSeds3}
involves the quantity $\bar e_*(T)$, which depends on $T$ in an
unknown way. As was discussed in the introduction, it is still an
open question whether $\bar e_*(T)$ is uniformly bounded in time for 
any solution of \eqref{NSeq} in $X_M$. If we restrict ourselves 
to solutions for which $\bar e_*(\infty) < \infty$, then 
Corollary~\ref{NSedsres} allows to draw interesting consequences
on the long-time behavior. For instance, inequality \eqref{NSeds1}
shows that the energy flux $f(x_1,t)$ through any fixed point 
$x_1 \in \R$ is, on average, very small when $t$ is large. It 
follows that the total energy that is dissipated in a spatial 
domain of size $\OO(T^{1/2})$ over the time interval $[0,T]$ grows
at most like $T^{1/2}$ as $T \to \infty$, as indicated by 
\eqref{NSeds2}, \eqref{NSeds3}. Since, by assumption (A3), the
energy dissipation $d(x_1,t)$ vanishes only on the set of equilibria,
one can deduce, as in \cite{GS2}, that any solution of \eqref{NSeq}
with uniformly bounded energy density converges, in a suitable 
localized and averaged sense, to the set of equilibria as 
$t \to \infty$. In particular, it follows from \eqref{NSeds2}
or \eqref{NSeds3} that the Navier-Stokes equation \eqref{NSeq} 
has no solutions in $\BUC(\O)$ that are periodic in time, 
except for spatially homogeneous equilibria. 

The main goal of the present paper is to reproduce the results of
\cite{GS2} for the Navier-Stokes equation without assuming that the
energy density stays uniformly bounded. As we shall show in the
subsequent sections, this can be achieved by using the additional
assumption (A5) in Proposition~\ref{NSeds}, which holds in our case
but not for some of the systems considered in \cite{GS2}. We shall
thus obtain estimates which are similar to \eqref{NSeds1}--\eqref{NSeds3}
but do not contain the quantity $\bar e_*(T)$ in the right-hand side. 

\begin{remark}\label{compactness}
If we equip our function space $X$ with a topology that is 
weak enough so that all solutions with uniformly bounded 
energy density are compact, then Corollary~\ref{NSedsres} also 
gives some information on the omega-limit set of such solutions. 
A natural choice is the ``localized'' topology $\TT_\loc$, 
which is the topology of uniform convergence on compact sets 
for the velocity field $u(x_1,x_2)$. Indeed, by Ascoli's 
theorem, any bounded subset of $X$ is relatively compact 
with respect to $\TT_\loc$. Moreover, although the Navier-Stokes
equation is nonlocal, it is straightforward to verify that
the solutions of \eqref{NSeq}, \eqref{pdef} depend continuously on the 
initial data in the topology $\TT_\loc$, so that \eqref{NSeq} 
defines a continuous semiflow in $X_\loc = (X,\TT_\loc)$. If 
we restrict ourselves to the subset $X_M$ for some $M > 0$, 
then \eqref{NSeq} nearly defines an extended dissipative 
system in the sense of Definition~\ref{edsdef}. The only 
caveat concerns assumption (A1): the flux $f$ and the 
dissipation $d$ do not depend continuously on $u$ in the localized 
topology $\TT_\loc$, but this is a minor point and most of the results 
of \cite{GS2} can be derived without that property. For instance, if 
$u(t)$ is a solution of \eqref{NSeq} that stays uniformly bounded in 
$X$, then the trajectory $\{u(t)\}_{t\ge0}$ is relatively 
compact in $X_\loc$, and it follows from \cite{GS2} that the
omega-limit set $\Omega(u)$ contains at least an equilibrium
$\bar u$ with $\nabla \bar u \equiv 0$. Moreover the solution
$u(t)$ stays most of the time, in a sense that can be 
quantified precisely, within an arbitrary neighborhood (in
$\TT_\loc$) of the set of spatially homogeneous equilibria. 
\end{remark}

\section{Integrated flux bounds}\label{sec4}

From now on, we do not consider specifically the Navier-Stokes
equation anymore, but we study a general extended dissipative 
system on $\R$ in the sense of Definition~\ref{edsdef}. Keeping
in mind the applications to \eqref{NSeq}, we strengthen assumption
(A2) as follows:\\[2mm]
\null\hskip 8pt {\bf (A2')} $\,|f|^2 \le \beta ed$ for some positive 
  constant $\beta$.\\[2mm]
Moreover, we add another hypothesis, which will be crucial
in obtaining results without a priori bounds on the solutions\:\\[2mm]
\null\hskip 8pt {\bf (A5)} $\,\,|\partial_x e|^2 \le \gamma ed$ for 
  some positive constant $\gamma$.\\[2mm]
Here and in what follows, we denote the space variable by $x \in \R$ 
(instead of $x_1$). Assumption (A5) means that the energy gradient 
generates dissipation, and in combination with (A2') this will drive the 
whole theory. 

Given a solution $u(t) = \Phi(t)u_0$ of our system, we consider 
the (time-dependent) energy density $e(x,t) \ge 0$, the energy 
flux $f(x,t) \in \R$, and the energy dissipation rate $d(x,t) \ge 0$, 
which are continuous functions on $\R \times \R_+$. In view 
of (A4), the local energy dissipation law $\partial_t e = \partial_x 
f - d$ holds in the sense of distributions on $\R \times \R_+$. 
As a consequence, given $T > 0$ and $a,b \in \R$ with $a < b$, 
we have the integrated energy balance equation
\begin{equation}\label{r:balance}
  \int_a^b\Bigl(e(x,T)-e(x,0)\Bigr)\dd x \,=\, 
  \int_0^T\Bigl(f(b,t)-f(a,t)\Bigr)\dd t - \int_0^T\int_a^b
  d(x,t)\dd x\dd t~.
\end{equation}
The left-hand side is the variation of energy in the segment $[a,b]$
from initial time $t = 0$ to final time $t = T$. The first term in the
right-hand side represents the energy entering (or leaving) the segment
$[a,b]$ through the endpoints over the time interval $[0,T]$, and the
last term is the energy that is dissipated in $[a,b]$ for $t \in
[0,T]$. This relation will be used so often that we now introduce
shorthand notations for the various quantities in \eqref{r:balance}.

We use capital letters $E,F,D$ to denote the integrals of $e,f,d$ 
with respect to time, namely
\begin{equation}\label{EFDdef}
  E(x,T) \,=\, \int_{0}^{T}e(x,t)\dd t~, ~\quad
  F(x,T) \,=\, \int_{0}^{T}f(x,t)\dd t~, ~\quad 
  D(x,T) \,=\, \int_{0}^{T}d(x,t)\dd t~, 
\end{equation}
for all $x \in \R$ and all $T \ge 0$. Thus, if $a < b$, the total energy 
dissipated in the segment $[a,b]$ over the time interval $[0,T]$ is
\begin{equation}\label{Ddef}
  D([a,b],T) \,=\, \int_a^b D(x,T) \dd x \,=\, \int_a^b\int_0^T d(x,t) 
  \dd t \dd x~.
 \end{equation}
Another important quantity is the "available" energy in the segment 
$[a,b]$ at time $T$, which we define as
\begin{equation}\label{Adef}
  A([a,b],T) \,=\, \int_a^b e(x,0)\dd x+ F(b,T) - F(a,T)~.
\end{equation}
This is the energy that would be present in the segment $[a,b]$ at
time $T$, due to the initial data and to the flux through the 
endpoints, if no dissipation was included in our model. Indeed, 
using this notation, the integrated energy balance \eqref{r:balance} 
reads
\begin{equation}\label{r:intbalance}
  A([a,b],T) \,=\, \int_a^b e(x,T)\dd x + D([a,b],T)~.  
\end{equation}
Finally, one of our main goals is investigating the energy growth, 
so it is convenient to introduce the following notations for the
supremum of the energy density: 
\begin{equation}\label{Estardef} 
  \begin{array}{l} \DS\,e_*(t) \,=\, \sup_{x\in \R}e(x,t)~, \\
  \DS E_*(t) \,=\, \sup_{x\in \R}E(x,t)~, \end{array} \qquad
  \begin{array}{l} \DS
  \DS \,e_*([a,b],t) \,=\, \sup_{x\in [a,b]}e(x,t)~, \\
  \DS E_*([a,b],t) \,=\, \sup_{x\in [a,b]}E(x,t)~. \end{array}
\end{equation}

As a first application, we use the energy balance equation and 
assumption (A2') to derive useful bounds on the integrated energy 
flux $F(x,T)$, which will serve as a basis for the analysis in the subsequent 
sections. We begin with a local version of the integrated flux bound. 

\begin{proposition}\label{p:localflux}
Let $u(t) = \Phi(t)u_0$ be any solution of an extended dissipative
system on $\R$ satisfying (A2'). Then, for all $a,b \in \R$ with
$a < b$ and all $T > 0$, we have
\begin{align}\label{r:left} 
  F(a,T) \,&\le\,  \phantom{+}\sqrt{\beta e_*([a,b],0)E_*([a,b],T)} \,+\,
  \frac{\beta E_*([a,b],T)}{b-a}~,  \\  \label{r:right}
  F(b,T) \,&\ge\, -\sqrt{\beta e_*([a,b],0)E_*([a,b],T)} \,-\, 
  \frac{\beta E_*([a,b],T)}{b-a}~. 
\end{align}
\end{proposition}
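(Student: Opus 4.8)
The plan is to derive a pointwise (in $x$) bound on the integrated flux, convert the time-integrated energy balance into a differential inequality for $F(\cdot,T)$, and close the argument by a Riccati-type comparison on the interval $[a,b]$.

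First I would record the elementary flux bound. Fix $x\in[a,b]$ and $T>0$. By assumption (A2') we have $|f(x,t)|\le\sqrt{\beta\,e(x,t)\,d(x,t)}$, so the Cauchy--Schwarz inequality in the time variable gives
$$|F(x,T)|\,\le\,\int_0^T|f(x,t)|\dd t\,\le\,\sqrt{\beta}\Bigl(\int_0^T e(x,t)\dd t\Bigr)^{1/2}\Bigl(\int_0^T d(x,t)\dd t\Bigr)^{1/2}\,=\,\sqrt{\beta\,E(x,T)\,D(x,T)}.$$
Writing $M=E_*([a,b],T)$ and $m_0=e_*([a,b],0)$, this yields the key inequality $F(x,T)^2\le\beta M\,D(x,T)$, that is $D(x,T)\ge F(x,T)^2/(\beta M)$, valid for all $x\in[a,b]$.

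Next I would integrate the energy balance (A4) in time. Integrating $\partial_t e=\partial_x f-d$ over $t\in[0,T]$ and using the definitions \eqref{EFDdef}, one obtains $\partial_x F(x,T)=e(x,T)-e(x,0)+D(x,T)$; since $e,d$ are continuous this identity holds classically and shows $F(\cdot,T)\in C^1(\R)$. Because $e(x,T)\ge0$ and $e(x,0)\le m_0$ on $[a,b]$, combining with the previous step gives the Riccati-type differential inequality
$$\partial_x F(x,T)\,\ge\,D(x,T)-m_0\,\ge\,\frac{F(x,T)^2}{\beta M}-m_0,\qquad x\in[a,b].$$
To prove \eqref{r:left} I would argue as follows. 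If $F(a,T)\le\sqrt{\beta M m_0}$ there is nothing to prove, so assume $F(a,T)>\sqrt{\beta M m_0}$. Then the differential inequality forces $\partial_x F(x,T)\ge0$ wherever $F(x,T)\ge\sqrt{\beta M m_0}$, so $F(\cdot,T)$ is nondecreasing and stays above $\sqrt{\beta M m_0}$ throughout $[a,b]$. Setting $\nu=\sqrt{\beta M m_0}$ and $\phi=F/\nu>1$, the inequality becomes $\phi'\ge\kappa(\phi^2-1)$ with $\kappa=\sqrt{m_0/(\beta M)}$; integrating $\frac{\phi'}{\phi^2-1}\ge\kappa$ over $[a,b]$ and using that $\phi(b)$ is finite (so $\log\frac{\phi(b)-1}{\phi(b)+1}<0$) yields $\log\frac{\phi(a)+1}{\phi(a)-1}>2\kappa(b-a)$. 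Finally the elementary bound $\log(1+s)<s$ applied to $s=2/(\phi(a)-1)$ gives $\phi(a)-1<1/(\kappa(b-a))$, which after unwinding the definitions of $\nu,\kappa$ is exactly $F(a,T)<\sqrt{\beta M m_0}+\beta M/(b-a)$, the claimed estimate \eqref{r:left}. The lower bound \eqref{r:right} on $F(b,T)$ follows by the mirror argument: if $F(b,T)<-\sqrt{\beta M m_0}$, the same differential inequality makes $\psi=-F$ increase as $x$ decreases from $b$, and the identical Riccati integration over the interval of length $b-a$ produces the symmetric bound.

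The routine parts are the Cauchy--Schwarz estimate and the bookkeeping with the constants $\nu,\kappa$. The main obstacle is the sharpness of the comparison step: a crude use of the Riccati inequality (e.g. comparing with finite-length blow-up of $1/F$) loses numerical factors, and to recover exactly the constants $\sqrt{\beta e_*E_*}$ and $\beta E_*/(b-a)$ one must integrate $\phi'\ge\kappa(\phi^2-1)$ precisely and exploit the strict inequality $\log(1+s)<s$ rather than a coarser estimate. A secondary point requiring care is the justification that $F(\cdot,T)$ is genuinely $C^1$ in $x$ with the stated derivative, so that the differential-inequality machinery applies in the classical sense; this follows from the continuity of $e$ and $d$ guaranteed by (A1).
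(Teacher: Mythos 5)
Your proof is correct and follows essentially the same route as the paper's: the same Cauchy--Schwarz/H\"older step giving $F(x,T)^2 \le \beta E_*([a,b],T)\,D(x,T)$, the same time-integrated energy balance $\partial_x F(x,T) = e(x,T)-e(x,0)+D(x,T) \ge D(x,T)-e_*([a,b],0)$, and the same Riccati comparison on $[a,b]$; your normalization $\phi = F/\nu$ together with $\log(1+s)<s$ is just a rescaled form of the paper's integration of $\partial_x F/(F^2-\rho^2)\ge 1/(\beta\widetilde{E})$ followed by the equivalent elementary bound $(e^{2Y}+1)/(e^{2Y}-1)\le 1+1/Y$. The only (inessential) omission is the degenerate case $E_*([a,b],T)=0$, where your divisions by $\beta M$ and $\nu$ are undefined; the paper dispatches it first by noting that (A2') then forces $F\equiv 0$ on $[a,b]$.
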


\begin{proof}
Let $\widetilde{e} = e_*([a,b],0)$ and $\widetilde{E} =E_*([a,b],T)$. If 
$\widetilde{E}=0$, then by (A2') we have $F(x,T) = 0$ for all $x\in [a,b]$, 
and \eqref{r:left}, \eqref{r:right} trivially hold. So we assume that 
$\widetilde{E}>0$ and prove \eqref{r:left}, the proof of \eqref{r:right} 
being analogous. Using assumption (A2') and H\"older's inequality, 
we find for any $x\in [a,b]$:
\begin{align}\nonumber
  F(x,T)^{2} \,&\le\, \Bigl(\int_0^T |f(x,t)| \dd t\Bigr)^2 \,\le\, \Bigl(
  \int_0^T \beta^{1/2}e(x,t)^{1/2} d(x,t)^{1/2}\dd t\Bigr)^2 \\ \label{F1}
  \,&\le\,\beta \Bigr(\int_0^T e(x,t)\dd t\Bigr) \Bigl(
  \int_0^Td(x,t)\dd t\Bigr) \,\le\, \beta \widetilde{E}\,D(x,T)~.
\end{align}
On the other hand, if we integrate in time the energy dissipation law (A4) and
use the fact that $e(x,T) \ge 0$ and $e(x,0) \le \widetilde{e}$, we obtain 
for all $x \in [a,b]$:
\begin{equation}\label{F2}
  \partial_x F(x,T) \,=\, e(x,T) - e(x,0) + D(x,T) \,\ge\, 
  -\widetilde{e}+D(x,T)~.
\end{equation}
Thus, combining \eqref{F1} and \eqref{F2}, we see that the integrated 
flux $F(x,t)$ satisfies the differential inequality
\begin{equation}\label{r:diffin}
  \partial_x F(x,T) \,\ge\, -\widetilde{e} +\frac{1}{\beta \widetilde{E}}
  \,F(x,T)^{2}~, \qquad x \in [a,b]~.  
\end{equation}

Let $\rho = (\beta \widetilde{e}\widetilde{E})^{1/2}$. If $F(a,T)\le \rho$, 
then \eqref{r:left} is proved. If $F(a,T) > \rho$, then $\partial_x F(a,T)>0$, 
and it follows from \eqref{r:diffin} that $F(x,T) >\rho$ for all 
$x\in [a,b]$, so that
\[
  \frac{\partial_x F(x,T)}{F(x,T)^2 - \rho^2} \,\ge\, 
  \frac{1}{\beta \widetilde{E}}~, \qquad x \in [a,b]~.  
\]
Integrating both sides over $x \in [a,b]$ we deduce
\[
  \frac{b-a}{\beta \widetilde{E}} \,\le\,\frac{1}{2 \rho} 
  \ln\left(\frac{F(a,T)+\rho}{F(a,T)-\rho} \cdot\frac{F(b,T)- 
  \rho}{F(b,T)+\rho}\right) \\ 
  \,\le\, \frac{1}{2 \rho} \ln\left(\frac{F(a,T)
  +\rho}{F(a,T)-\rho}\right)~.
\]
Thus, if we denote $Y = \rho(b-a)/(\beta\widetilde{E})$, we arrive at 
\[
  F(a,T) \,\le\, \rho \,\frac{\exp (2Y)+1}{\exp (2Y)-1} \,\le\, \rho 
  \Bigl(1+\frac{1}{Y}\Bigr) \,=\, \sqrt{\beta \widetilde{e}\widetilde{E}}
  +\frac{\beta \widetilde{E}}{b-a}~,
\]
which is the desired result. 
\end{proof}

A remarkable feature of inequalities \eqref{r:left}, \eqref{r:right}
is that they give estimates on the integrated flux $F(x_0,T)$ in terms
of the energy density $e(x,t)$ for $t \in [0,T]$ and $x$ in a {\em
neighborhood} of $x_0$. Simpler estimates involving the energy over
the whole line $\R$ easily follow from Proposition~\ref{p:localflux}
and are often sufficient in the applications.

\begin{corollary}\label{c:fluxbound} 
Under the assumptions of Proposition~\ref{p:localflux}, one has for all 
$x \in \R$ and all  $T>0$
\begin{equation}\label{fluxbound}
  |F(x,T)| \,\le\, \sqrt{\beta e_*(0)E_*(T)}~,  
\end{equation}
where $e_*(0)$ and $E_*(T)$ are defined in \eqref{Estardef}.   
\end{corollary}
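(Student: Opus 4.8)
The plan is to deduce the global bound \eqref{fluxbound} directly from the local estimates \eqref{r:left}, \eqref{r:right} of Proposition~\ref{p:localflux} by letting the length of the auxiliary interval tend to infinity. First I dispose of a trivial case: if $E_*(T) = +\infty$ the right-hand side of \eqref{fluxbound} is infinite and there is nothing to prove, so I may assume $E_*(T) < \infty$ (and similarly $e_*(0) < \infty$, as otherwise the asserted bound is again vacuous).

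To bound $F(x,T)$ from above, I would apply \eqref{r:left} with $a = x$ and $b = x + L$, where $L > 0$ is arbitrary. By monotonicity of the supremum we have $e_*([x,x+L],0) \le e_*(0)$ and $E_*([x,x+L],T) \le E_*(T)$, so \eqref{r:left} yields
\begin{equation*}
  F(x,T) \,\le\, \sqrt{\beta\, e_*(0)\, E_*(T)} \,+\, \frac{\beta\, E_*(T)}{L}~.
\end{equation*}
Letting $L \to \infty$ makes the error term $\beta E_*(T)/L$ vanish (this is the only place where finiteness of $E_*(T)$ is used), and we obtain $F(x,T) \le \sqrt{\beta\, e_*(0)\, E_*(T)}$.

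Symmetrically, to bound $F(x,T)$ from below I would apply \eqref{r:right} with $b = x$ and $a = x - L$, again using $e_*([x-L,x],0) \le e_*(0)$ and $E_*([x-L,x],T) \le E_*(T)$, and then send $L \to \infty$ to obtain $F(x,T) \ge -\sqrt{\beta\, e_*(0)\, E_*(T)}$. Combining the two one-sided bounds gives $|F(x,T)| \le \sqrt{\beta\, e_*(0)\, E_*(T)}$, which is \eqref{fluxbound}.

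This argument is essentially routine, and there is no serious obstacle: the only genuinely nonlocal term in Proposition~\ref{p:localflux}, namely $\beta E_*([a,b],T)/(b-a)$, is dominated by the fixed quantity $\beta E_*(T)/(b-a)$ and therefore disappears as the interval grows. The single point worth keeping in mind is that the limiting step is only meaningful when $E_*(T)$ is finite; when it is infinite the inequality holds trivially, which is why I treat that case separately at the outset.
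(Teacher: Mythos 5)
Your proposal is correct and follows essentially the same route as the paper: apply \eqref{r:left} with $b \to +\infty$ and \eqref{r:right} with $a \to -\infty$, using the monotonicity $e_*([a,b],0) \le e_*(0)$, $E_*([a,b],T) \le E_*(T)$ so that the error term $\beta E_*(T)/(b-a)$ vanishes in the limit. Your explicit handling of the case $E_*(T) = +\infty$ is a harmless extra precaution that the paper leaves implicit.
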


\begin{proof}
Since $e_*([a,b],0) \le e_*(0)$ and $E_*([a,b],T) \le E_*(T)$, it follows from 
\eqref{r:left}, \eqref{r:right} that
\[
  F(a,T) \,\le\, \sqrt{\beta e_*(0)E_*(T)} +  \frac{\beta E_*(T)}{b-a}~, \qquad
  F(b,T) \,\ge\, -\sqrt{\beta e_*(0)E_*(T)} -  \frac{\beta E_*(T)}{b-a}~.
\]
If we now take the limit $b \to +\infty$ in the first inequality and $a \to 
-\infty$ in the second one, we obtain \eqref{fluxbound}. 
\end{proof}

The proof of Proposition~\ref{p:localflux} also shows that, in a
one-dimensional extended dissipative system, the energy density
$e(x,t)$ cannot be everywhere an increasing function of time. More
precisely, we have

\begin{corollary}\label{c:zero}
Let $u(t) = \Phi(t)u_0$ be a solution of an extended dissipative system 
on $\R$ satisfying (A2'), and assume that there exists $T > 0$ such 
that $e(x,T) \ge e(x,0)$ for all $x \in \R$. Then $f(x,t) = d(x,t) = 0$ 
and $e(x,t) = e(x,0)$ for all $x \in \R$ 
and all $t \in [0,T]$. 
\end{corollary}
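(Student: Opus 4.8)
The plan is to show that the integrated flux $F(\cdot,T)$ must be identically zero, and then to read off the remaining conclusions from the energy balance. First I would integrate the local dissipation law (A4) in time exactly as in \eqref{F2}, obtaining $\partial_x F(x,T) = e(x,T) - e(x,0) + D(x,T)$ for all $x \in \R$. The hypothesis $e(x,T) \ge e(x,0)$, combined with $D(x,T) \ge 0$, shows at once that $F(\cdot,T)$ is nondecreasing, and in fact that $\partial_x F(x,T) \ge D(x,T)$.

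The heart of the matter is to upgrade this monotonicity to $F(\cdot,T) \equiv 0$. Here I would revisit the proof of Proposition~\ref{p:localflux}. The Cauchy--Schwarz estimate \eqref{F1} still yields $F(x,T)^2 \le \beta \widetilde E\, D(x,T)$ on any segment $[a,b]$, with $\widetilde E = E_*([a,b],T)$, but now \eqref{F2} improves to $\partial_x F(x,T) \ge D(x,T)$, since the term $-\widetilde e$ disappears thanks to the hypothesis. Combining the two sharpens the differential inequality \eqref{r:diffin} into the \emph{homogeneous} Riccati inequality $\partial_x F(x,T) \ge (\beta \widetilde E)^{-1} F(x,T)^2$ on $[a,b]$. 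Rerunning the integration step of Proposition~\ref{p:localflux} with $\rho = 0$ then upgrades \eqref{r:left}, \eqref{r:right} to $F(a,T) \le \beta E_*([a,b],T)/(b-a)$ and $F(b,T) \ge -\beta E_*([a,b],T)/(b-a)$, the square-root terms now being absent.

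From these sharpened bounds I would argue as in Corollary~\ref{c:fluxbound}: fixing $a$ and letting $b \to +\infty$ forces $F(a,T) \le 0$, while fixing $b$ and letting $a \to -\infty$ forces $F(b,T) \ge 0$; since $F(\cdot,T)$ is nondecreasing, this pins down $F(\cdot,T) \equiv 0$. The main obstacle is precisely this passage to the limit: it requires $E_*([a,b],T)/(b-a) \to 0$, i.e. that the integrated energy grow sublinearly in the length $b-a$ (in particular it suffices that $E_*(T)$ be finite). Controlling the energy at spatial infinity is the delicate point, and it is where the monotonicity of $F$ must be played off against the flux bound rather than used in isolation.

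Once $F(\cdot,T) \equiv 0$, the remaining conclusions follow mechanically. Since $\partial_x F(x,T) = (e(x,T)-e(x,0)) + D(x,T)$ is a sum of two nonnegative terms that now vanishes, both terms vanish: $e(x,T) = e(x,0)$ and $D(x,T) = \int_0^T d(x,t)\dd t = 0$ for every $x \in \R$. As $d \ge 0$ is continuous, $D(x,T) = 0$ gives $d(x,t) = 0$ for all $x \in \R$ and $t \in [0,T]$; then (A2') yields $|f(x,t)|^2 \le \beta e(x,t) d(x,t) = 0$, so $f \equiv 0$ on $\R \times [0,T]$; and finally the balance (A4) reduces to $\partial_t e = 0$, whence $e(x,t) = e(x,0)$ throughout $\R \times [0,T]$, as claimed.
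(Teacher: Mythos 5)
Your proof is correct and follows essentially the same route as the paper's: under the hypothesis, \eqref{F2} holds with $\widetilde{e}=0$, so rerunning Proposition~\ref{p:localflux} (with $\rho=0$) and Corollary~\ref{c:fluxbound} gives $F(\cdot,T)\equiv 0$, after which the energy balance together with (A2') and (A4) yields $d\equiv 0$, $f\equiv 0$, and $e(x,t)=e(x,0)$. The finiteness of $E_*(T)$ that you flag as the delicate point is implicitly assumed in the paper's Corollary~\ref{c:fluxbound} as well (and holds in the Navier--Stokes application), so it does not mark a genuine divergence between your argument and the paper's.
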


\begin{proof}
If $e(x,T) \ge e(x,0)$ for all $x \in \R$, it is clear that inequality 
\eqref{F2} holds for all $x \in \R$ with $\widetilde{e} = 0$. Proceeding 
as in the proof of 
Proposition~\ref{p:localflux} and Corollary~\ref{c:fluxbound}, we deduce 
that $F(x,T) \le 0$ for all $x \in \R$, and finally that $F(\cdot,T) \equiv 0$. 
Since $e(x,T) \ge e(x,0)$, the integrated energy balance \eqref{r:balance}
then implies that the energy dissipation $d(x,t)$ vanishes identically for 
$t \in [0,T]$, and so does the energy flux $f(x,t)$ by (A2'). Now, using the 
local energy dissipation law (A4), we conclude that $e(x,t) = e(x,0)$ for
all $x \in \R$ and all $t \in [0,T]$.
\end{proof}

In view of assumption (A3), Corollary~\ref{c:zero} implies that
equilibria are the only possible solutions for which $e(x,t) \ge
e(x,0)$ for all $x \in \R$ and all $t \ge 0$.  In particular, a state
$u \in X$ for which the energy density $e(x)$ vanishes identically (or
is equal to its minimal value) is necessary an equilibrium.  For
instance, it follows from Corollary~\ref{c:zero} that the
Navier-Stokes equation \eqref{NSeq} has no nontrivial solution $u \in
C^0_b((0,T],X)$ in the space \eqref{Xdef} such that $u(x,t)$ converges
to zero uniformly on compact sets of $\R$ as $t \to 0$.

\section{Integrated energy bounds}\label{sec5}

We have seen in Corollary~\ref{c:fluxbound} that the integrated
energy flux $F(x,T)$ can be bounded by an expression depending only 
on the initial data and the integrated energy density $E_*(T)$. 
Using the additional assumption (A5), we now prove that $E_*(T)$ can 
in turn be estimated in terms of the initial data and the observation
time $T$. We begin with

\begin{lemma}\label{l:L2first}
Let $u(t) = \Phi(t)u_0$ be any solution of an extended dissipative
system on $\R$ satisfying (A5). Given $T > 0$ and $a,b \in \R$ with
$a < b$, we have for all $x \in [a,b]$\:
\begin{equation}\label{L2first}
  \left(\int_0^T e(x,t)^2\dd t\right)^{1/2} \le\,
  \biggl(\frac{\sqrt{T}}{b-a} + \sqrt{\gamma}\biggr) 
  \sup_{0 \le t \le T} A([a,b],t)~,
\end{equation}
where the available energy $A([a,b],t)$ is defined in \eqref{Adef}
and satisfies \eqref{r:intbalance}. 
\end{lemma}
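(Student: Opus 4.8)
The plan is to bound the energy density $e(x,t)$ pointwise in the space variable $x \in [a,b]$ by the sum of an ``average'' term and a ``gradient'' term, and then to take the $L^2$ norm in time, splitting the two contributions by Minkowski's inequality so that their constants add. Throughout I abbreviate $\bar A = \sup_{0 \le t \le T} A([a,b],t)$.

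First I would establish, for every fixed $t \in [0,T]$ and every $x,y \in [a,b]$, the identity $e(x,t) = e(y,t) + \int_y^x \partial_z e(z,t)\dd z$, whence $e(x,t) \le e(y,t) + \int_a^b |\partial_z e(z,t)|\dd z$. Averaging this over $y \in [a,b]$ gives
\begin{equation*}
  e(x,t) \,\le\, \frac{1}{b-a}\int_a^b e(y,t)\dd y + \int_a^b |\partial_z e(z,t)|\dd z~.
\end{equation*}
For the average term, the integrated balance \eqref{r:intbalance} together with $D([a,b],t) \ge 0$ yields $\int_a^b e(y,t)\dd y \le A([a,b],t) \le \bar A$. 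For the gradient term, assumption (A5) gives $|\partial_z e| \le \sqrt{\gamma}\, e^{1/2}d^{1/2}$, so by the Cauchy--Schwarz inequality in the variable $z$,
\begin{equation*}
  \int_a^b |\partial_z e(z,t)|\dd z \,\le\, \sqrt{\gamma}\Bigl(\int_a^b e(z,t)\dd z\Bigr)^{1/2}
  \Bigl(\int_a^b d(z,t)\dd z\Bigr)^{1/2} \,\le\, \sqrt{\gamma}\,\bar A^{1/2}\Bigl(\int_a^b d(z,t)\dd z\Bigr)^{1/2}~.
\end{equation*}

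I would then pass to the $L^2(0,T)$ norm in time. Since $e \ge 0$ and the right-hand side is a sum of two nonnegative terms, Minkowski's inequality bounds $\|e(x,\cdot)\|_{L^2(0,T)}$ by the sum of the $L^2(0,T)$ norms of the two terms. The first (average) term is at most $\bar A/(b-a)$ for every $t$, so its $L^2(0,T)$ norm is at most $\frac{\sqrt{T}}{b-a}\,\bar A$, which already accounts for the first constant in \eqref{L2first}. For the second (gradient) term, squaring and integrating in time gives
\begin{equation*}
  \int_0^T \Bigl(\int_a^b |\partial_z e(z,t)|\dd z\Bigr)^2 \dd t \,\le\, \gamma\,\bar A
  \int_0^T\!\!\int_a^b d(z,t)\dd z\dd t \,=\, \gamma\,\bar A\, D([a,b],T)~.
\end{equation*}
Here the crucial step is to observe that the total dissipation $D([a,b],T)$ is itself controlled by the available energy: using \eqref{r:intbalance} once more and $e(\cdot,T) \ge 0$, we have $D([a,b],T) = A([a,b],T) - \int_a^b e(x,T)\dd x \le A([a,b],T) \le \bar A$. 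Hence the last displayed integral is at most $\gamma \bar A^2$, so the $L^2(0,T)$ norm of the gradient term is at most $\sqrt{\gamma}\,\bar A$. Adding the two contributions yields exactly the bound \eqref{L2first}.

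The main point requiring care is the double use of \eqref{r:intbalance}: once to bound the instantaneous spatial energy $\int_a^b e(\cdot,t)$ by $\bar A$, and once to bound the time-integrated dissipation $D([a,b],T)$ again by $\bar A$. It is precisely this second, dissipative, closure that replaces the a priori energy bound used in \cite{GS2}, and it is made possible by assumption (A5), which converts the spatial gradient of $e$ into dissipation. A minor technical point is the differentiability in $x$ needed to write the fundamental theorem of calculus above; this is justified since (A5) bounds $\partial_x e$ and, in the applications, $e$ is smooth in $x$ for positive times.
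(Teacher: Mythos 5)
Your proof is correct and follows essentially the same route as the paper: the same decomposition of $e(x,t)$ into a spatial average plus a gradient term, assumption (A5) with Cauchy--Schwarz to convert the gradient into $(\int_a^b e)^{1/2}(\int_a^b d)^{1/2}$, Minkowski's inequality in $L^2(0,T)$, and the double use of the integrated balance \eqref{r:intbalance} to bound both $\sup_t \int_a^b e(\cdot,t)$ and $D([a,b],T)$ by $\sup_t A([a,b],t)$. The only differences (inserting the bound $\int_a^b e \le \bar A$ before rather than after the time integration, and phrasing the pointwise bound via the fundamental theorem of calculus) are cosmetic.
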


\begin{proof}
For any $x_0 \in [a,b]$ and any $t \in [0,T]$, we have
\[
  \left|e(x_0,t) - \frac{1}{b-a}\int_a^b e(x,t)\dd x\right|
  \,\le\, \int_a^b|\partial_x e(x,t)|\dd x~.  
\]
Applying (A5) and H\"older's inequality, we obtain
\[
  \int_a^b |\partial_x e(x,t)|\dd x \,\le\, \sqrt{\gamma} 
  \int_a^b \Bigl(e(x,t)d(x,t)\Bigr)^{1/2}\dd x \,\le\, \sqrt{\gamma}
  \Bigl(\int_a^b e(x,t)\dd x\Bigr)^{1/2} \Bigl(\int_a^b d(x,t)\dd x
  \Bigr)^{1/2}~,
\]
hence
\begin{equation}\label{r:lone}
  e(x_0,t) \,\le\, \frac{1}{b-a}\int_a^b e(x,t)\dd x + 
  \sqrt{\gamma} \Bigl(\int_a^b e(x,t)\dd x\Bigr)^{1/2} 
  \Bigl(\int_a^b d(x,t)\dd x\Bigr)^{1/2}~.
\end{equation}

Using \eqref{r:lone} we now estimate the $L^2$ norm in time 
of $e(x_0,t)$. Since $\int_a^b e(x,t)\dd x \le A([a,b],t)$ by 
\eqref{r:intbalance}, we have
\begin{equation}\label{r:ltwo}
  \int_0^T \Bigl(\int_a^b e(x,t)\dd x\Bigr)^2\dd t \,\le\, 
  \int_0^T A([a,b],t)^2\dd t \,\le\, T A_*^2~,
\end{equation}
where we introduced the shorthand notation $A_* = \sup\{A([a,b],t)\,|\,
t \in [0,T]\}$. Similarly, in view of \eqref{Ddef} and 
\eqref{r:intbalance}, we obtain
\begin{equation}\label{r:lthree}
  \int_0^T \Bigl(\int_a^b e(x,t)\dd x\Bigr)\Bigl(\int_a^b d(x,t)\dd x
  \Bigr)\dd t \,\le\, \sup_{0 \le t \le T}\Bigl(\int_a^b e(x,t)
  \dd x\Bigr)D([a,b],T) \,\le\, A_*^2~.
\end{equation}
Applying Minkowski's inequality to the right-hand side of \eqref{r:lone} 
and using \eqref{r:ltwo}, \eqref{r:lthree}, we thus find
\[
  \Bigl(\int_0^T e(x_0,t)^2 \dd t\Bigr)^{1/2} \,\le\, 
  \frac{\sqrt{T} A_*}{b-a} + \sqrt{\gamma} A_*~,
\]
which is the desired result.
\end{proof}

\begin{remark}\label{r:power2}
It is clear from inequality \eqref{r:lone} that we can estimate
the $L^p$ norm in time of the energy density $e(x_0,t)$ for 
$p \le 2$ only. Indeed, the only control we have on the 
energy dissipation $d(x,t)$ is the bound $D([a,b],T) \le 
A([a,b],T)$, which comes from \eqref{r:intbalance}, and this 
corresponds to the limiting case $p = 2$. 
\end{remark}

In view of \eqref{L2first}, it is natural to introduce the quantity
\begin{equation}\label{EEstardef}
  \EE_*(T) \,=\, \sup_{x \in \R}  \left(\int_0^T e(x,t)^2\dd t
  \right)^{1/2}~, \qquad T > 0~,
\end{equation}
which controls $E_*(T)$ since $E_*(T) = \sup_{x\in\R}\int_0^T e(x,t)\dd t
\le \sqrt{T} \EE_*(T)$. Combining Corollary~\ref{c:fluxbound} and 
Lemma~\ref{l:L2first}, we obtain the main result of this section\:

\begin{proposition}\label{p:L2energy}
Let $u(t) = \Phi(t)u_0$ be any solution of an extended dissipative
system on $\R$ satisfying (A2') and (A5). There exists a constant 
$\kappa > 1$, depending only on the product $\beta\gamma$, such that, 
for all $T > 0$, 
\begin{equation}\label{L2energy}
  \EE_*(T) \,\le\, \kappa e_*(0)\sqrt{T}~, \qquad \hbox{and}\qquad
  E_*(T) \,\le\, \kappa e_*(0) T~.
\end{equation}
\end{proposition}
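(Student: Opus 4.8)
The plan is to establish a self-improving (bootstrap) inequality for the quantity $\EE_*(T)$ by feeding the flux bound from Corollary~\ref{c:fluxbound} into the $L^2$ energy estimate of Lemma~\ref{l:L2first}, and then to choose the interval length cleverly to close the loop. First I would fix $x_0 \in \R$ and $T > 0$ and apply Lemma~\ref{l:L2first} on a symmetric interval $[a,b] = [x_0 - L, x_0 + L]$ of half-length $L$ to be chosen later, so that $b - a = 2L$. The left-hand side there is exactly $\left(\int_0^T e(x_0,t)^2\dd t\right)^{1/2}$, which upon taking the supremum over $x_0$ is the object $\EE_*(T)$ I want to bound. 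The right-hand side involves $\sup_{0\le t\le T} A([a,b],t)$, and the key observation is that the available energy $A([a,b],t)$ can itself be controlled: by its definition \eqref{Adef} it equals $\int_a^b e(x,0)\dd x + F(b,t) - F(a,t)$, so I would bound $\int_a^b e(x,0)\dd x \le 2L\,e_*(0)$ and use the flux bound \eqref{fluxbound} from Corollary~\ref{c:fluxbound} to estimate $|F(b,t)|, |F(a,t)| \le \sqrt{\beta e_*(0) E_*(t)} \le \sqrt{\beta e_*(0) E_*(T)}$.

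Carrying this through, I expect an inequality of the form
\begin{equation}\label{plan:boot}
  \EE_*(T) \,\le\, \biggl(\frac{\sqrt{T}}{2L} + \sqrt{\gamma}\biggr)
  \Bigl(2L\,e_*(0) + 2\sqrt{\beta e_*(0) E_*(T)}\Bigr)~.
\end{equation}
Now comes the crucial step: I would use the relation $E_*(T) \le \sqrt{T}\,\EE_*(T)$ noted just before the proposition to replace $E_*(T)$ on the right-hand side by $\sqrt{T}\,\EE_*(T)$, turning \eqref{plan:boot} into a closed inequality for the single unknown $\EE_*(T)$. The right-hand side then contains a term proportional to $\sqrt{E_*(T)} \le T^{1/4}\EE_*(T)^{1/2}$, so after the substitution one gets an inequality of the shape $\EE_* \le \alpha_1 e_*(0)\sqrt{T} + \alpha_2 (e_*(0))^{1/2} T^{1/4}(\EE_*)^{1/2}$ for constants $\alpha_1,\alpha_2$ depending on the scale-invariant quantities. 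Here the natural choice is to take $L$ proportional to $\sqrt{T}$, say $L = c\sqrt{T}$ for a constant $c$ chosen to balance the two contributions $\sqrt{T}/(2L)$ and $\sqrt{\gamma}$; this is what makes the prefactor $(\sqrt{T}/(2L) + \sqrt{\gamma})$ depend only on $\gamma$ and $c$, and it is what ultimately forces the constant $\kappa$ to depend only on the product $\beta\gamma$.

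The main obstacle, and the step requiring genuine care, is that $\EE_*(T)$ appears on both sides of the bootstrap inequality with a \emph{sublinear} power on the right (through $(\EE_*)^{1/2}$), so one cannot simply rearrange linearly; instead I would treat \eqref{plan:boot} after substitution as a quadratic inequality in $\EE_*(T)^{1/2}$ and solve it. Writing $Z = \EE_*(T)^{1/2}$, the inequality has the form $Z^2 - \alpha_2 (e_*(0))^{1/2}T^{1/4} Z - \alpha_1 e_*(0)\sqrt{T} \le 0$, whose positivity of discriminant and the quadratic formula yield $Z \le C (e_*(0))^{1/2} T^{1/4}$ for an explicit $C$ depending on $\alpha_1,\alpha_2$, hence $\EE_*(T) = Z^2 \le \kappa e_*(0)\sqrt{T}$. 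One must verify that $a\,priori$ $\EE_*(T)$ is finite so that this algebraic manipulation is legitimate (otherwise the quadratic argument is vacuous); this follows because $u \in X_M$ gives a locally controlled but possibly $T$-dependent bound, or alternatively from the continuity of $e$ in $t$, so the supremum is finite for each fixed $T$. Once $\EE_*(T) \le \kappa e_*(0)\sqrt{T}$ is established, the second assertion $E_*(T) \le \kappa e_*(0)T$ is immediate from $E_*(T) \le \sqrt{T}\,\EE_*(T)$, and the constant $\kappa$ inherits its dependence solely on $\beta\gamma$ from the balancing choice of $c$.
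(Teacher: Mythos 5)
Your proposal is correct and follows essentially the same route as the paper's proof: bound the available energy via Corollary~\ref{c:fluxbound}, insert it into Lemma~\ref{l:L2first} with interval length proportional to $\sqrt{T}$ (balanced so the constant depends only on $\beta\gamma$), close the loop using $E_*(T)\le\sqrt{T}\,\EE_*(T)$, and solve the resulting quadratic inequality. The only cosmetic differences are your symmetric interval and your unnormalized quadratic variable $Z=\EE_*(T)^{1/2}$ where the paper uses the dimensionless $Z^2=\EE_*(T)/(e_*(0)\sqrt{T})$; your remark on the a priori finiteness of $\EE_*(T)$ addresses a point the paper leaves implicit.
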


\begin{proof}
We need only prove the first inequality in \eqref{L2energy}, which 
implies the second one. Fix $T > 0$, and take $a,b \in \R$ with 
$a < b$. By \eqref{Adef} and \eqref{fluxbound}, the available 
energy in the interval $[a,b]$ at any time $t \in [0,T]$ 
satisfies 
\[
  A([a,b],t) \,\le\, e_*(0)(b-a) + |F(b,t)| + |F(a,t)| 
  \,\le\, e_*(0)(b-a) + 2\sqrt{\beta e_*(0)E_*(t)}~.
\]
If $x \in [a,b]$, it thus follows from \eqref{L2first} that
\begin{equation}\label{L2second}
  \Bigl(\int_0^T e(x,t)^2\dd t\Bigr)^{1/2} \le\,
  \Bigl(\frac{\sqrt{T}}{b-a} + \sqrt{\gamma}\Bigr) 
  \Bigl(e_*(0)(b-a) + 2\sqrt{\beta e_*(0)E_*(T)}\Bigr)~,
\end{equation}
because $E_*(t) \le E_*(T)$ if $t \in [0,T]$. We now assume that 
$b-a = \epsilon \sqrt{T}$, where $\epsilon = (\beta/\gamma)^{1/4}$. 
Inserting this relation into the right-hand side of \eqref{L2second}, 
and then taking the supremum over $x \in \R$ in the left-hand side, 
we obtain the inequality
\begin{equation}\label{L2third}
  \EE_*(T) \,\le\, (1+\sigma)\Bigl(e_*(0)\sqrt{T} + 2\sigma
  \sqrt{e_*(0)E_*(T)}\Bigr)~,
\end{equation}
where $\sigma = (\beta\gamma)^{1/4}$. If $e_*(0) = 0$, then $\EE_*(T) 
= 0$ in agreement with \eqref{L2energy}. In the converse case, we 
define $Z > 0$ such that
\[
  Z^2 \,=\, \frac{\EE_*(T)}{e_*(0) \sqrt{T}}~,
\]
and since $E_*(T) \le \sqrt{T}\EE_*(T)$ we deduce from \eqref{L2third}
that $Z^2 \le (1+\sigma)(1+2\sigma Z)$. This quadratic inequality
implies that $Z^2 \le \kappa$, where
\begin{equation}\label{kappadef}
  \sqrt{\kappa} \,=\, \sigma(1+\sigma) + \sqrt{\sigma^2
  (1+\sigma)^2 + (1+\sigma)}~,
\end{equation}
and \eqref{L2energy} follows. This concludes the proof.
\end{proof}

As an immediate consequence of Corollary~\ref{c:fluxbound} and 
Proposition~\ref{p:L2energy}, we obtain our final estimate 
on the integrated energy flux\:

\begin{corollary}\label{c:flux}
Under the assumptions of Proposition~\ref{p:L2energy} we 
have, for all $x \in \R$ and all $T > 0$,
\begin{equation}\label{r:flux}
  |F(x,T)| \,\le\, e_*(0)\sqrt{\kappa\beta T}~.  
\end{equation}
\end{corollary}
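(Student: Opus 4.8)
The plan is to obtain this estimate by a direct substitution, chaining together the two preceding results. Corollary~\ref{c:fluxbound} already furnishes the bound
\[
  |F(x,T)| \,\le\, \sqrt{\beta\, e_*(0)\, E_*(T)}~, \qquad x \in \R~,~~ T > 0~,
\]
but its right-hand side still involves the integrated energy $E_*(T)$, which at this stage is not controlled by the data alone. The whole point of Proposition~\ref{p:L2energy}, obtained via assumption (A5), is precisely to remove this dependence: it supplies the a priori bound $E_*(T) \le \kappa\, e_*(0)\, T$, in which the right-hand side depends only on the initial energy density and the observation time.

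The single step I would carry out is therefore to insert the bound on $E_*(T)$ into the flux estimate above. This gives
\[
  |F(x,T)| \,\le\, \sqrt{\beta\, e_*(0)\, E_*(T)} \,\le\,
  \sqrt{\beta\, e_*(0)\cdot \kappa\, e_*(0)\, T} \,=\, e_*(0)\sqrt{\kappa\beta T}~,
\]
which is exactly \eqref{r:flux}. Since the inequalities hold for every $x \in \R$ and every $T > 0$, no further argument is needed.

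There is no genuine obstacle here: the corollary is \emph{immediate} once Corollary~\ref{c:fluxbound} and Proposition~\ref{p:L2energy} are in hand. The substantive work was already done earlier, first in the differential-inequality argument of Proposition~\ref{p:localflux} that produced the local flux bounds, and then in Lemma~\ref{l:L2first} and the self-improving quadratic estimate of Proposition~\ref{p:L2energy}, where assumptions (A2') and (A5) are combined to close the bound on $\EE_*(T)$. The role of the present statement is simply to record, in its cleanest form, the resulting pointwise-in-$x$ control of the integrated flux in terms of the initial data and $T$ alone, which is the estimate that will be invoked in the long-time analysis of the subsequent sections.
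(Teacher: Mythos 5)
Your proof is correct and is exactly the paper's argument: the paper presents this corollary as an immediate consequence of Corollary~\ref{c:fluxbound} and Proposition~\ref{p:L2energy}, which is precisely the substitution $E_*(T) \le \kappa e_*(0) T$ into $|F(x,T)| \le \sqrt{\beta e_*(0) E_*(T)}$ that you carry out. Nothing further is needed.
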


\section{Some dynamical implications}\label{sec6}

In this section, we draw a few consequences of the previous results,
in the spirit of what was done in \cite{GS2} for {\em bounded}
solutions of extended dissipative systems. As in
Proposition~\ref{p:L2energy}, we always assume that $u(t) =
\Phi(t)u_0$ is a solution of an extended dissipative system on $\R$
satisfying (A2') and (A5).  We first observe that our bound on the
integrated energy flux implies a useful estimate on the energy
dissipation.

\begin{proposition}\label{p:dissip}
Under the assumptions of Proposition~\ref{p:L2energy} we have, 
for all $T > 0$ and all $R > 0$,  
\begin{equation}\label{dissip}
  \int_{-R}^R e(x,T) \dd x + \int_0^T \int_{-R}^R d(x,t)\dd x\dd t
  \,\le\, 2 e_*(0)\Bigl(R + \sqrt{\kappa \beta T}\Bigr)~,
\end{equation}
where $\kappa$ is defined in \eqref{kappadef}.      
\end{proposition}

\begin{proof}
By \eqref{r:intbalance} the left-hand side of \eqref{dissip} is 
equal to the available energy $A([-R,R],T)$. Now, using definition
\eqref{Adef} and Corollary~\ref{c:flux}, we see that $A([-R,R],T)
\le 2R e_*(0) + 2e_*(0)\sqrt{\kappa\beta T}$, and \eqref{dissip} follows.  
\end{proof}

Inequality \eqref{dissip} shows that the dissipated energy
$D([-R,R],T)$ grows at most like $\sqrt{T}$ as $T \to \infty$.  In
particular, all equilibria of our extended dissipative system are
non-dissipative (i.e., they satisfy $d \equiv 0$), and there exist no
other time-periodic solutions. Moreover, since by assumption (A3) only
equilibria satisfy $d \equiv 0$, Proposition~\ref{p:dissip} can be
used to prove that all trajectories converge, in a suitable sense, to
the set of equilibria as $t \to \infty$. For instance, arguing as in
\cite[Proposition~5.1]{GS2}, we obtain

\begin{corollary}\label{c:conveq1}
Consider an extended system on $\R$ satisfying (A2') and (A5). 
If $\bar u \in X$ is not an equilibrium, then $\bar u$ has a 
neighborhood $\VV$ in $X$ such that, for any solution $u(t) = 
\Phi(t)u_0$, one has
\[
  \limsup_{T \to \infty} \frac{1}{\sqrt{T}}\int_0^T \1_\VV
  (u(t))\dd t \,<\, \infty~,
\]
where $\1_\VV$ denotes the characteristic function of $\VV$. 
\end{corollary}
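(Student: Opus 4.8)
The plan is to translate the hypothesis ``$\bar u$ is not an equilibrium'' into a \emph{quantitative and spatially localized} lower bound on the dissipation produced by any trajectory that passes close to $\bar u$, and then to combine this with the global dissipation ceiling of Proposition~\ref{p:dissip}. The guiding principle is an accounting argument: each visit of the trajectory to a suitable neighborhood $\VV$ of $\bar u$ forces a fixed positive amount of dissipation into a fixed compact space-time window; since Proposition~\ref{p:dissip} bounds the total dissipation in a fixed spatial window over $[0,T]$ by $\OO(\sqrt T)$, only $\OO(\sqrt T)$ such visits can occur, which is exactly the asserted growth rate for $\int_0^T \1_\VV(u(t))\dd t$.

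First I would construct the neighborhood. Fix any $t_0 > 0$. Since $\bar u$ is not an equilibrium, the contrapositive of (A3) shows that the dissipation of the trajectory $t \mapsto \Phi(t)\bar u$ is not identically zero on $\R \times [0,t_0]$; as $d$ is continuous and nonnegative, this means $\int_0^{t_0}\!\int_\R d > 0$, and by monotone convergence we may pick $R_0$ large enough that
\[
  \delta \,:=\, \int_0^{t_0}\!\!\int_{-R_0}^{R_0} d(x,t)\dd x\dd t \,>\, 0~.
\]
The essential point is that the functional $v \mapsto \int_0^{t_0}\!\int_{-R_0}^{R_0} d(x,t)\dd x\dd t$, evaluated along $t \mapsto \Phi(t)v$, is continuous on $X$: the semiflow $(t,v)\mapsto \Phi(t)v$ is continuous uniformly on the compact interval $[0,t_0]$, the orbit $\{\Phi(t)\bar u : t\in[0,t_0]\}$ is compact, and by (A1) the dissipation depends continuously on the state, uniformly on the compact set $[-R_0,R_0]$; composing these and integrating over the compact domain $[0,t_0]\times[-R_0,R_0]$ gives the claimed continuity. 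Hence the set $\VV := \{v \in X : \int_0^{t_0}\!\int_{-R_0}^{R_0} d > \delta/2 \text{ along } \Phi(\cdot)v\}$ is an open neighborhood of $\bar u$.

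Next I would exploit the cocycle structure. If $u(s) \in \VV$ for some $s \ge 0$, then writing the trajectory past time $s$ as $\tau \mapsto \Phi(\tau)u(s)$ and using the definition of $\VV$ gives $\int_s^{s+t_0}\!\int_{-R_0}^{R_0} d(x,t)\dd x\dd t \ge \delta/2$. To avoid double counting overlapping windows I would run a greedy (Vitali-type) selection on the bad set $B_T = \{t\in[0,T] : u(t)\in\VV\}$: set $s_1 = \inf B_T$, and inductively $s_{i+1} = \inf\{t\in B_T : t \ge s_i + t_0\}$, obtaining points $s_1 < \dots < s_N$ with pairwise disjoint windows $[s_i,s_i+t_0)$ whose union covers $B_T$, so that $\meas(B_T) \le N t_0$. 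Summing the dissipation lower bounds over these disjoint windows, all contained in $[0,T+t_0]\times[-R_0,R_0]$, and applying Proposition~\ref{p:dissip} (dropping its nonnegative energy term, with $R=R_0$ and final time $T+t_0$) yields
\[
  N\,\frac{\delta}{2} \,\le\, \int_0^{T+t_0}\!\!\int_{-R_0}^{R_0} d(x,t)\dd x\dd t
  \,\le\, 2 e_*(0)\bigl(R_0 + \sqrt{\kappa\beta(T+t_0)}\bigr)~.
\]
Therefore $\int_0^T \1_\VV(u(t))\dd t \le N t_0 = \OO(\sqrt T)$, and dividing by $\sqrt T$ and taking the limit superior gives the finite bound, completing the proof.

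The main obstacle is the second paragraph: converting the purely qualitative statement ``$\bar u$ is not an equilibrium'' (which (A3) phrases only through dissipation vanishing on an unbounded strip $\R\times[0,t_0]$) into a strictly positive value of a \emph{localized} and \emph{continuous} functional on $X$, so that it persists on an open neighborhood. The localization in $x$ relies on $d \ge 0$ together with monotone convergence, while the continuity requires carefully combining the uniform-on-compacts continuity of the semiflow with assumption (A1); once these are in place, the remaining covering and counting steps are routine.
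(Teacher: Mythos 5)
Your proposal is correct and takes essentially the same route as the paper: the paper's proof of this corollary consists of invoking the argument of \cite[Proposition~5.1]{GS2} together with Proposition~\ref{p:dissip}, and that argument is precisely yours --- use (A3), (A1) and the continuity of the semiflow to build a neighborhood $\VV$ of $\bar u$ each visit to which forces a fixed amount of dissipation into a fixed space-time window, then count pairwise disjoint windows against the $\OO(\sqrt{T})$ dissipation ceiling. Your replacement of the uniform-energy-bound dissipation estimate of \cite{GS2} by Proposition~\ref{p:dissip} is exactly the adaptation the paper intends, so there is no gap to flag.
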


Corollary~\ref{c:conveq1} shows that any trajectory $u(t)$ spends a
very small fraction of its lifetime in a sufficiently small
neighborhood of any nonequilibrium point. If we assume in addition
that our configuration space $X$ is {\em compact} (see
Remark~\ref{compactness}), then using a finite covering argument we
can deduce that any trajectory spends most of its time near the set of
equilibria. More precisely, proceeding as in
\cite[Proposition~5.4]{GS2}, we find

\begin{corollary}\label{c:conveq2}
Consider a compact extended system on $\R$ satisfying (A2') and (A5). 
If $\VV$ is a neighborhood of the set of equilibria, then any 
solution $u(t) = \Phi(t)u_0$ satisfies 
\[
  \limsup_{T \to \infty} \frac{1}{\sqrt{T}}\int_0^T \1_{\VV^c}
  (u(t))\dd t \,<\, \infty~,
\]
where $\1_{\VV^c}$ denotes the characteristic function of 
$X \setminus \VV$. 
\end{corollary}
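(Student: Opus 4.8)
The plan is to deduce this from Corollary~\ref{c:conveq1} by a finite-covering argument, exploiting the compactness of the configuration space. First I would note that, since $\VV$ is a neighborhood of the set of equilibria, its complement $\VV^c = X \setminus \VV$ is a closed subset of $X$ that contains no equilibrium. Because the system is assumed compact, $X$ is compact, and hence the closed set $\VV^c$ is compact as well. This is the structural observation that makes the whole argument work: every point of $\VV^c$ is a nonequilibrium, so Corollary~\ref{c:conveq1} applies at each of them, and compactness lets us pass from a pointwise statement to a uniform one.

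Next I would apply Corollary~\ref{c:conveq1} at each $\bar u \in \VV^c$. Since $\bar u$ is not an equilibrium, that corollary furnishes an open neighborhood $\VV_{\bar u}$ of $\bar u$ in $X$ such that
\[
  \limsup_{T \to \infty} \frac{1}{\sqrt{T}}\int_0^T \1_{\VV_{\bar u}}(u(t))\dd t \,<\, \infty
\]
for every solution $u(t) = \Phi(t)u_0$. The family $\{\VV_{\bar u}\}_{\bar u \in \VV^c}$ is then an open cover of the compact set $\VV^c$, so I can extract a finite subcover $\VV_{\bar u_1}, \dots, \VV_{\bar u_N}$ with $\VV^c \subseteq \bigcup_{i=1}^N \VV_{\bar u_i}$. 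This inclusion gives the pointwise bound $\1_{\VV^c} \le \sum_{i=1}^N \1_{\VV_{\bar u_i}}$, whence
\[
  \frac{1}{\sqrt{T}}\int_0^T \1_{\VV^c}(u(t))\dd t \,\le\, \sum_{i=1}^N \frac{1}{\sqrt{T}}\int_0^T \1_{\VV_{\bar u_i}}(u(t))\dd t \qquad (T > 0)~.
\]
Taking $\limsup$ as $T \to \infty$ and using that the $\limsup$ of a finite sum is at most the sum of the $\limsup$'s, each of which is finite by the displayed bound, yields the claim.

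I do not expect a serious obstacle, since the genuine dynamical content is already packaged in Corollary~\ref{c:conveq1}; the remaining work is purely topological. The two points requiring minor care are that the neighborhoods provided by Corollary~\ref{c:conveq1} can be taken open, so that they form a bona fide open cover amenable to extraction of a finite subcover, and that the functions $t \mapsto \1_{\VV_{\bar u_i}}(u(t))$ are measurable — which follows from the continuity of the semiflow, ensuring $t \mapsto u(t)$ is continuous. The essential step is simply recognizing that $\VV^c$ is a compact, equilibrium-free set, so that a finite subcover upgrades the local estimates of Corollary~\ref{c:conveq1} into a single bound valid throughout $\VV^c$.
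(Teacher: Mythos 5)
Your proof is correct and is exactly the argument the paper has in mind: the paper derives Corollary~\ref{c:conveq2} from Corollary~\ref{c:conveq1} by precisely this finite-covering argument over the compact, equilibrium-free set $\VV^c$ (citing \cite[Proposition~5.4]{GS2} rather than writing out the details). Your handling of the minor points --- passing to open neighborhoods and summing the finitely many $\limsup$ bounds --- is the right way to fill in what the paper leaves implicit.
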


\begin{remark}\label{measures}
Corollary 6.3. has several ergodic-theoretical implications for
compact extended systems satisfying (A2') and (A5). For instance, 
one can show by applying Birkhoff's ergodic theorem that all 
invariant measures are supported on the set of equilibria. 
Furthermore, using the variational principle for topological and 
metric entropy, one can conclude that the topological entropy of the 
system is necessarily zero, see \cite[Section 4]{Sl} for a 
related discussion. 
\end{remark}

We now drop the compactness assumption and return to the general
case considered in Proposition~\ref{p:L2energy}. We have already
observed in Corollary~\ref{c:zero} that only trivial solutions
(namely, equilibria) have the property that the energy density
$e(x,t)$ does not decrease anywhere in space when times varies. 
We now derive a more precise result which strongly constraints 
the set of points where energy can increase. Given a solution 
$u(t) = \Phi(t)u_0$ of an extended dissipative system on $\R$, 
we define, for any $T > 0$,
\begin{equation}\label{JRdef}
  J_T \,=\, \biggl\{R>0\,\bigg|~\int_{-R}^R e(x,T)\dd x  \,\ge\, 
  \int_{-R}^Re(x,0)\dd x\biggr\} \,\subset\, (0,\infty)~.
\end{equation}

\begin{proposition}\label{p:finiteLeb}
Let $u(t) = \Phi(t)u_0$ be any solution of an extended dissipative
system on $\R$ satisfying (A2') and (A5), and assume that $u_0 \in X$
is not an equilibrium. Then, for any $T > 0$, the set $J_T$ defined 
by \eqref{JRdef} has a finite Lebesgue measure.
\end{proposition}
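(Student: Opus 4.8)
The plan is to rewrite membership in $J_T$ as a differential inequality for the dissipated energy and then to bound $\meas(J_T)$ by integrating it. First I would apply the integrated energy balance \eqref{r:intbalance} on the symmetric interval $[-R,R]$. Setting $\phi(R) = F(R,T)-F(-R,T)$ and $\psi(R) = D([-R,R],T)$, definition \eqref{Adef} gives $\int_{-R}^R e(x,T)\dd x - \int_{-R}^R e(x,0)\dd x = \phi(R)-\psi(R)$, so that $R \in J_T$ precisely when $\phi(R) \ge \psi(R)$. Here $\psi$ is nonnegative and nondecreasing, and since $D(\cdot,T)$ is continuous it is of class $C^1$ with $\psi'(R) = D(R,T)+D(-R,T)$.

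The key point is to use a \emph{pointwise} flux--dissipation bound rather than the global estimate of Corollary~\ref{c:flux}. The computation leading to \eqref{F1} (which uses only (A2') and H\"older's inequality) yields $F(x,T)^2 \le \beta E_*(T)\,D(x,T)$ for every $x \in \R$, where $E_*(T) < \infty$ by Proposition~\ref{p:L2energy}. Evaluating this at $x = \pm R$ and using $\sqrt a + \sqrt b \le \sqrt{2(a+b)}$, I would obtain $\phi(R) \le |F(R,T)|+|F(-R,T)| \le C'\sqrt{\psi'(R)}$ with $C' := \sqrt{2\beta E_*(T)}$. Consequently, on $J_T$ one has $\psi(R) \le \phi(R) \le C'\sqrt{\psi'(R)}$, which after squaring becomes the differential inequality $\psi'(R) \ge \psi(R)^2/C'^2$.

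To exploit this, I would first locate a radius beyond which $\psi$ is bounded away from zero. Since $u_0$ is not an equilibrium, assumption (A3) applied with $t_0 = T$ shows that $d$ does not vanish identically on $\R \times [0,T]$; by continuity $D(\cdot,T)$ is then strictly positive on a neighborhood of some point, so there is $R_0 > 0$ with $\delta := \psi(R_0) > 0$. For $R \ge R_0$ monotonicity gives $\psi(R) \ge \delta > 0$, and on $J_T \cap [R_0,\infty)$ the differential inequality reads $\psi'(R)/\psi(R)^2 \ge 1/C'^2$. Integrating the nonnegative quantity $\psi'/\psi^2$ over $[R_0,R]$, restricting the domain of integration to $J_T$, and using $\int_{R_0}^R \psi'/\psi^2\,\dd s = 1/\psi(R_0)-1/\psi(R) \le 1/\delta$, I would get $\meas\bigl(J_T \cap [R_0,R]\bigr) \le C'^2/\delta$ for every $R \ge R_0$. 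Letting $R \to \infty$ bounds $\meas\bigl(J_T \cap [R_0,\infty)\bigr)$, and since $\meas\bigl(J_T \cap (0,R_0)\bigr) \le R_0$, the total measure of $J_T$ is finite.

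The main obstacle is the regime in which the total dissipation $\psi(R)$ stays bounded as $R \to \infty$. There the crude flux bound \eqref{r:flux} is of no help, since both $\phi$ and $\psi$ may remain bounded and one cannot conclude that $J_T$ is a bounded interval. This is exactly why the sharper pointwise bound $F(x,T)^2 \le \beta E_*(T)\,D(x,T)$ is essential: it converts membership in $J_T$ into the autonomous inequality $\psi' \ge \psi^2/C'^2$, which controls $\meas(J_T)$ uniformly in both the bounded- and unbounded-dissipation regimes. A minor technical point to verify along the way is the measurability of $J_T$ and the $C^1$ regularity of $\psi$, both of which follow from the continuity of $f$ and $d$.
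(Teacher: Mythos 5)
Your proof is correct, and it follows the same overall architecture as the paper's: the energy balance on $[-R,R]$ to characterize membership in $J_T$, the pointwise estimate $F(x,T)^2 \le \beta E_*(T)\,D(x,T) \le \beta\kappa e_*(0)T\,D(x,T)$ coming from \eqref{F1} and Proposition~\ref{p:L2energy}, assumption (A3) to get positive dissipation at large radii, and a Riccati-type differential inequality in $R$ integrated over $J_T$. Where you genuinely differ is in the choice of the monotone comparison function. The paper takes $\FF(R)$ proportional to $\int_{-R}^R F(x,T)^2 \dd x$, whose derivative is built from $F(\pm R,T)^2$, and runs the chain $\FF'(R) \ge c\,(F(R,T)-F(-R,T))^2 \ge c'\,\bigl(\int_{-R}^R F(x,T)^2\dd x\bigr)^2$ on $J_T$, which requires the pointwise flux--dissipation bound \emph{integrated} over the whole interval (this is \eqref{r:FEF}). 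You instead take $\psi(R) = D([-R,R],T)$, which is nondecreasing by inspection with derivative $D(R,T)+D(-R,T)$, and you only need the pointwise bound \emph{at the two endpoints} $x = \pm R$ to close the loop $\psi \le \phi \le C'\sqrt{\psi'}$ on $J_T$; this is marginally more economical, and the quantity carrying the Riccati inequality has a direct physical meaning (total dissipated energy). Both routes deliver the same conclusion with comparable constants. One small point you should make explicit: dividing by $C'^2 = 2\beta E_*(T)$ requires $E_*(T) > 0$. This is automatic under your hypotheses --- if $E_*(T) = 0$ then $e \equiv 0$ on $\R \times [0,T]$, hence $f \equiv 0$ by (A2') and $d \equiv 0$ by (A4), so $u_0$ would be an equilibrium by (A3) --- and it plays exactly the role that Corollary~\ref{c:zero} plays in the paper's proof, where it is invoked to ensure $e_*(0) > 0$ before dividing by $\beta\kappa e_*(0)T$.
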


\begin{proof}
Given $T > 0$, we define for any $R > 0$
\[
  \partial E(R,T) \,=\, \int_{-R}^R e(x,T)\dd x - \int_{-R}^R 
  e(x,0)\dd x~.
\]
The energy balance \eqref{r:balance} then implies
\begin{equation}\label{r:FDF}
  F(R,T) - F(-R,T) \,=\, \partial E(R,T) + D([-R,R],T)~, \qquad
  R > 0~.
\end{equation}
On the other hand, proceeding as in \eqref{F1} and using 
\eqref{L2energy}, we find for all $x \in \R$
\[
  F(x,T)^2 \,\le\, \beta \Bigl(\int_0^T e(x,t)\dd t\Bigr) \Bigl(
  \int_0^T d(x,t)\dd t\Bigr) \,\le\, \beta \kappa e_*(0)T \,D(x,T)~,
\]
so that $\int_{-R}^R F(x,T)^2\dd x \le \beta \kappa e_*(0)T \,D([-R,R],T)$.
In view of Corollary~\ref{c:zero}, the assumption that $u_0$ is not an 
equilibrium implies that $e_*(0) > 0$, hence we deduce from \eqref{r:FDF}
that
\begin{equation}\label{r:FEF}
  F(R,T)-F(-R,T) \,\ge\, \partial E(R,T) + \frac{1}{\beta \kappa e_*(0)T}
  \int_{-R}^RF(x,T)^2\dd x~. 
\end{equation}

Since $u_0\in X$ is not an equilibrium, assumption (A3) implies
that $D([-R,R],T)>0$ for all sufficiently large $R > 0$, 
say for all $R \ge R_0 > 0$. On the other hand, by definition, 
we have $\partial E(R,T) \ge 0$ for all $R\in J_T$. Thus, 
using \eqref{r:FDF}, we conclude that $F(R,T)-F(-R,T)>0$ for all 
$R\in J_T\cap [R_0,\infty)$. 

If $J_T\cap \lbrack R_0,\infty )$ is empty, the claim is proved.
Otherwise, we choose $R_1\in J_T\cap \lbrack R_0,\infty)$, and 
we define
\[
  \FF(R) \,=\, \frac{1}{2(\beta \kappa e_*(0)T)^2}
  \int_{-R}^R F(x,T)^2 \dd x~, \qquad R > 0~.
\]
The function $\FF:(0,\infty )\to \R_+$ is nondecreasing and $\FF(R)>0$ 
for all $R \ge R_1$. Using \eqref{r:FEF} and the definition of $J_T$, 
we obtain
\begin{align*}
  \FF'(R) \,&=\, \frac{F(R,T)^2 + F(-R,T)^2}{2(\beta \kappa e_*(0)T)^2}
  \,\ge\, \frac{|F(R,T) - F(-R,T)|^2}{4(\beta \kappa e_*(0)T)^2} \\
  \,&\ge\, \frac{\1_{J_T}(R)}{4(\beta \kappa e_*(0)T)^4}
  \biggl(\int_{-R}^RF(x,T)^2\dd x\biggr)^2 \,=\, \1_{J_T}(R)\FF(R)^2~.
\end{align*}
Thus, for all $R > R_1$, we have
\begin{equation*}
  \int_{R_1}^R \1_{J_T}(R)\dd R \,\le\, \int_{R_1}^R \frac{\FF'(R)}{
  \FF(R)^2}\dd R \,\le\, \frac{1}{\FF(R_1)}-\frac{1}{\FF(R)}
  \,\le\, \frac{1}{\FF(R_1)}~.
\end{equation*}
This proves that $J_T\cap \lbrack R_1,\infty)$ has finite 
Lebesgue measure. 
\end{proof}

\section{Pointwise estimates on the energy density}\label{sec7}

In Sections~\ref{sec4} and \ref{sec5} we have shown that, under
assumptions (A2') and (A5), the energy density associated to any
solution of an extended dissipative system on $\R$ satisfies nice
integral bounds, which are summarized in
Proposition~\ref{p:L2energy}. A more difficult question is whether our
hypotheses also imply a uniform estimate in time on the energy
density. This is an important open problem, which we hope to address
in a future work. Before giving a partial result in that direction, we
observe that some naive blow-up scenarios are already excluded by
Proposition~\ref{p:L2energy} and Proposition~\ref{p:dissip}.  For
instance, if for some $x \in \R$ the energy density $e(x,t)$ is a
nondecreasing function of time, then \eqref{L2energy} implies that
$e(x,t) \le \kappa e_*(0)$ for all $t \ge 0$. Indeed, for any $T > t$ 
we have
\[
  e(x,t) \,\le\, \frac{1}{T-t}\int_t^T e(x,\tau)\dd\tau \,\le\,
  \frac{E_*(T)}{T-t} \,\le\, \frac{\kappa e_*(0)T}{T-t}~, 
\]
and the claim follows by taking $T \to \infty$. Thus a standard
scenario where the maximum of the energy density is reached at a fixed
point $x \in \R$ and increases with time cannot lead to any unbounded
growth.  On the other hand, applying \eqref{dissip} with $R =
\sqrt{\beta T}$, we obtain
\[
  \frac{1}{\sqrt{\beta T}}\int_{-\sqrt{\beta T}}^{\sqrt{\beta T}} 
  \,e(x,T)\dd x  \,\le\, 2(1+\sqrt{\kappa})e_*(0)~.  
\]
Thus, if for some $T > 0$ the energy density $e(x,T)$ is comparable to
$e_*(T)$ over an interval of size $2\sqrt{\beta T}$, then $e_*(T)$ is
in turn comparable to $e_*(0)$. This indicates that strong spatial
inhomogeneities necessarily occur in unbounded solutions, if they
exist.

To obtain a pointwise bound on the energy density in the abstract 
framework of extended dissipative systems, it appears necessary 
to introduce an additional assumption, which allows to control 
the spatial derivative of $e(x,t)$ at a given time. A reasonable
possibility is\:\\[1mm]
\null\hskip 8pt {\bf (A6)} $\,(\partial_x e)^2 \le \delta e$ for some
$\delta > 0$.\\[1mm]
Of course, (A6) follows from (A5) if the energy dissipation rate
$d(x,t)$ is uniformly bounded, which is indeed the case in many 
applications. Under this hypothesis, we have the following result. 

\begin{proposition}\label{p:growth}
Let $u(t) = \Phi(t)u_0$ be any solution of an extended dissipative
system on $\R$ satisfying (A2'), (A5), and (A6). There exists a 
constant $C > 0$, depending only on $\beta\gamma$, such that, for 
all $T > 0$, 
\begin{equation}\label{growth}
  e_*(T) \,\le\, C\Bigl(e_*(0) + e_*(0)^{2/3}(\delta\beta T)^{1/3}
  \Bigr)~. 
\end{equation}
\end{proposition}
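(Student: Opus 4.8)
The plan is to control the pointwise energy $e_*(T)$ at a fixed final time by combining the time-integrated bound from Proposition~\ref{p:L2energy} with the new spatial-regularity assumption (A6). Assumption (A5) has already given us good control of $\EE_*(T)$ and $E_*(T)$, but these are $L^2$-in-time and $L^1$-in-time quantities; to upgrade to a pointwise-in-time bound we need some mechanism that prevents the energy at a single time $T$ from being much larger than its time average. That mechanism is (A6), which controls how fast $e(x,T)$ can vary \emph{in space} at the fixed time $T$, and hence lets us convert ``the energy integrated over an $x$-interval is small'' into ``the energy at a point is small.''

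The key steps I would carry out are as follows. Fix $T>0$ and let $x_0\in\R$ be a point where $e(\cdot,T)$ is (nearly) maximal, so $e(x_0,T)$ is comparable to $e_*(T)$. From (A6) we have $|\partial_x e(x,T)|\le\sqrt{\delta e(x,T)}$, which controls how quickly the energy can drop off as we move away from $x_0$: on an interval of length $2r$ centered at $x_0$ the energy stays of order $e_*(T)$ provided $r$ is not too large compared to $e_*(T)/\sqrt{\delta e_*(T)} = \sqrt{e_*(T)/\delta}$. More precisely, integrating the differential inequality $|\partial_x\sqrt{e}|\le\tfrac12\sqrt{\delta}$ shows that $\sqrt{e(x,T)}\ge\sqrt{e_*(T)}-\tfrac12\sqrt{\delta}\,|x-x_0|$, so $e(x,T)\ge\tfrac12 e_*(T)$ on an interval of length comparable to $r\sim\sqrt{e_*(T)/\delta}$. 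Integrating this lower bound over that interval gives a lower bound on $\int_{x_0-r}^{x_0+r}e(x,T)\dd x$ of order $e_*(T)^{3/2}/\sqrt{\delta}$.

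On the other hand, Proposition~\ref{p:dissip} (applied on a suitably translated interval, or Proposition~\ref{p:L2energy} together with the flux bound of Corollary~\ref{c:flux}) gives an \emph{upper} bound on the spatial integral of $e(\cdot,T)$ over any interval of length $2r$, namely something of order $e_*(0)(r+\sqrt{\kappa\beta T})$. Matching the lower bound $\gtrsim e_*(T)^{3/2}/\sqrt{\delta}$ against this upper bound, and substituting the specific choice $r\sim\sqrt{e_*(T)/\delta}$, yields an algebraic inequality for $e_*(T)$ of the form $e_*(T)^{3/2}/\sqrt{\delta}\lesssim e_*(0)\bigl(\sqrt{e_*(T)/\delta}+\sqrt{\beta T}\bigr)$. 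Disentangling this inequality produces the two regimes in \eqref{growth}: the term $e_*(T)^{3/2}\lesssim e_*(0)\,e_*(T)^{1/2}$ gives the linear bound $e_*(T)\lesssim e_*(0)$, while the term $e_*(T)^{3/2}/\sqrt{\delta}\lesssim e_*(0)\sqrt{\beta T}$ gives $e_*(T)\lesssim e_*(0)^{2/3}(\delta\beta T)^{1/3}$; taking the larger of the two recovers \eqref{growth}.

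The main obstacle, and the step requiring the most care, is making the localization argument quantitatively correct: I need to choose the half-width $r$ self-consistently in terms of the unknown $e_*(T)$ (since the width of the region where energy is large itself depends on $e_*(T)$), and to make sure that the interval on which Proposition~\ref{p:dissip} is applied is correctly centered at the possibly-nonzero maximizing point $x_0$ rather than at the origin. One must either invoke translation invariance of the estimates or re-derive the available-energy bound on a shifted interval $[x_0-r,x_0+r]$, which is harmless since Corollary~\ref{c:flux} bounds $|F(x,T)|$ uniformly in $x$. A secondary technical point is that $x_0$ achieving the exact supremum may not exist, so I would work with a near-maximizer where $e(x_0,T)\ge\tfrac12 e_*(T)$ and absorb the resulting constants into $C$. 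Once these bookkeeping issues are handled, the algebra collapses to the stated two-term bound.
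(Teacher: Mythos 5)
Your proposal is correct and follows essentially the same route as the paper: use (A6) to show that $e(\cdot,T)$ stays comparable to $e_*(T)$ on an interval of half-width $\sim\sqrt{e_*(T)/\delta}$ around a point of (near-)maximal energy, bound the spatial integral of $e(\cdot,T)$ over that interval from above by the available energy via the uniform-in-$x$ flux bound of Corollary~\ref{c:flux} (so centering at $x_0$ is indeed harmless), and solve the resulting self-consistent algebraic inequality to get the two regimes $e_*(T)\lesssim e_*(0)$ and $e_*(T)\lesssim e_*(0)^{2/3}(\delta\beta T)^{1/3}$. The paper's only cosmetic differences are that it works with an arbitrary point $x_0$ together with the dichotomy $e(x_0,T)\le 4e_*(0)$ or not (taking the supremum over $x_0$ at the end, so no near-maximizer is needed), and that it uses the cruder linear lower bound $e(x,T)\ge e(x_0,T)-\sqrt{\delta e_*(T)}\,|x-x_0|$ in place of your integration of $|\partial_x\sqrt{e}|\le\tfrac12\sqrt{\delta}$.
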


\begin{proof}
We fix $T > 0$ and assume that $e_*(T) > 0$ (otherwise there is 
nothing to prove). Given $x_0 \in \R$, we have either $e(x_0,T) 
\le 4e_*(0)$, or $e(x_0,T) > 4e_*(0)$. In the latter case, 
we define $a = e(x_0,T)/\sqrt{\delta e_*(T)}$. Since 
$|\partial_x e(x,T)| \le \sqrt{\delta e_*(T)}$ by (A6), we have 
for all $x \in \R$
\[
  e(x,T) \,\ge\, e(x_0,T) - \sqrt{\delta e_*(T)}\,|x-x_0| \,=\, 
  e(x_0,T)\Bigl(1-\frac{|x-x_0|}{a}\Bigr)~.
\]
Thus, by \eqref{r:intbalance}, the available energy in the interval 
$[x_0-a,x_0+a]$ at time $T$ satisfies
\begin{equation}\label{pow1}
  A([x_0-a,x_0+a],T) \,\ge\, \int_{x_0-a}^{x_0+a} e(x,T)\dd x 
  \,\ge\, a e(x_0,T)~.
\end{equation}
On the other hand, in view of \eqref{Adef} and Corollary~\ref{c:flux}, 
we also have
\begin{equation}\label{pow2}
  A([x_0-a,x_0+a],T) \,\le\, 2ae_*(0) + 2 e_*(0)\sqrt{\kappa \beta T}~.
\end{equation}
Combining \eqref{pow1}, \eqref{pow2} and recalling that $e(x_0,T) 
> 4e_*(0)$, we thus find
\begin{equation}\label{pow3}
  a e(x_0,T) \,\equiv\, \frac{e(x_0,T)^2}{\sqrt{\delta e_*(T)}} \,\le\, 
  4 e_*(0)\sqrt{\kappa\beta T}~.
\end{equation}
Summarizing, given $x_0 \in \R$, we have shown that \eqref{pow3} holds
whenever $e(x_0,T) > 4e_*(0)$. Since $e_*(T) = \sup_{x_0 \in \R}
e(x_0,T)$, we conclude that
\[
  e_*(T) \,\le\, \max\Bigl(4 e_*(0)\,,\, (4e_*(0))^{2/3} (\delta\kappa
  \beta T)^{1/3}\Bigr)~,
\]
and \eqref{growth} follows. 
\end{proof}

In the particular case of the Navier-Stokes equation \eqref{NSeq}, it
is possible to use Proposition~\ref{p:growth} to prove
Theorem~\ref{thm2}, but this approach requires a uniform bound on the
energy dissipation rate $d$ which is not obvious a priori. In fact, if
$u$ is any solution of \eqref{NSeq} in the space $X$ defined by
\eqref{Xdef}, we know from \eqref{nablauest} that $\nabla u$ is
uniformly bounded in the space $\BMO(\O)$ for all $t \ge 0$, because
the vorticity $\omega = \partial_1 u_2 - \partial_2 u_1$ is bounded in
$L^\infty(\O)$, but this is not sufficient to control the energy
dissipation \eqref{ddef} in $L^\infty(\R)$.  However, using the
vorticity equation \eqref{omeq} and the fact that the only possibly
unbounded component of the velocity field is the vertical average $m =
\langle u_2\rangle$, it is possible to prove that the vorticity
$\omega$ is uniformly bounded in some H\"older space $C^\alpha(\O)$
for all $t \ge t_0 > 0$, and using the Biot-Savart formula we can
deduce that $\nabla u$ is uniformly bounded in $L^\infty(\O)$ for all
$t \ge 0$. This implies that the energy dissipation $d(x_1,t)$ is
bounded in $L^\infty(\R)$, so that (A6) follows from (A2'), and
Proposition~\ref{p:growth} allows us to conclude that the energy
density $e(x_1,t)$ cannot grow faster than $t^{1/3}$ as $t \to
\infty$, which shows \eqref{ubound2}.

Alternatively, Theorem~\ref{thm2} can be established by the following
direct argument,  which does not rely on Proposition~\ref{p:growth}. 

\medskip\noindent{\it Proof of Theorem~\ref{thm2}.} Let $u(x,t)$ be a
solution of the Navier-Stokes equations \eqref{NSeq}, \eqref{pdef}
given by Theorem~\ref{thm1}. Since we are interested in the long-time
behavior, we can assume without loss of generality that the initial
data $u_0$ belong to the set $X_M$ for some $M > 0$, see definition
\eqref{XMdef} and Proposition~\ref{localex}. Also, we suppose that $u$
is decomposed as in \eqref{udecomp} with $(m_1,m_2) = (0,m)$. We
already know that, for all $t \ge 0$,
\begin{equation}\label{omubounds}
  \|\omega(\cdot,t)\|_{L^\infty(\O)} \,\le\, M~, \qquad \hbox{and}
  \qquad\|\widehat u(\cdot,t)\|_{L^\infty(\O)} \,\le\, C_1 M~, \qquad
\end{equation}
see Lemma~\ref{l:bounds}. Since $\partial_1 m = \langle \omega
\rangle$, it follows that $|\partial_1 m(x_1,t)| \le M$ for all 
$x_1 \in \R$ and all $t \ge 0$. 

In the subsequent calculations, we fix a time $t > 0$ and, for 
simplicity, we denote the space variable by $x$ instead of 
$x_1$. Given $a > 0$, we have for all $x \in \R$\:
\begin{equation}\label{mbound1}
  |m(x,t)| \,\le\, \frac{Ma}{2} + \frac{1}{2a}\int_{x-a}^{x+a}|m(y,t)| 
  \dd y~,
\end{equation}
because
\[
  \frac{1}{2a}\int_{x-a}^{x+a} \Bigl(|m(x,t)| - |m(y,t)|\Bigr) \dd y 
  \,\le\, \frac{1}{2a}\int_{x-a}^{x+a} M|x-y| \dd y \,=\, \frac{Ma}{2}~.
\]
To bound the last term in \eqref{mbound1}, we observe that $|u|^2 = 
\widehat u_1^2 + (m + \widehat u_2)^2 \ge \frac12 m^2  - 
\widehat u_2^2$. Integrating that inequality with respect to the 
vertical variable and using \eqref{edef}, \eqref{omubounds},  we 
easily obtain
\begin{equation}\label{mbound2}
  m(x,t)^2 \,\le\, 4 e(x,t) + C~, \qquad x \in \R~,
\end{equation}
where $C = 2 C_1^2 M^2$. Thus, if we apply H\"older's inequality 
to the integral in \eqref{mbound1} and use \eqref{mbound2}, we 
arrive at 
\begin{equation}\label{mbound3}
  |m(x,t)| \,\le\, \frac{Ma}{2} + \Bigl( \frac{2}{a}\int_{x-a}^{x+a}
  e(y,t) \dd y + C\Bigr)^{1/2}~, \qquad x \in \R~.
\end{equation}
Finally, we know from Lemma~\ref{l:ehd} and Proposition~\ref{NSeds}
that the Navier-Stokes equation in $X_M$ defines an extended dissipative 
system satisfying (A2') for some $\beta > 0$ (depending on $M$) 
and (A5) with $\gamma = 2$. Thus, proceeding as in 
Proposition~\ref{p:dissip}, we find 
\[
  \frac{2}{a}\int_{x-a}^{x+a}e(y,t) \dd y \,\le\, \frac{2}{a}\Bigl(2a e_*(0) 
   + 2e_*(0)\sqrt{\kappa\beta t}\Bigr) \,=\, 4e_*(0)\Bigl(1 + 
   \frac{\sqrt{\kappa\beta t}}{a}\Bigr)~,
\]
for all $x \in \R$. After replacing this inequality in the right-hand side
of \eqref{mbound3} and taking the supremum over $x \in \R$, 
we conclude that
\begin{equation}\label{mbound4}
  \sup_{x \in \R}|m(x,t)| \,\le\, \frac{Ma}{2} + \biggl(C + 4e_*(0)
  \Bigl(1 + \frac{\sqrt{\kappa\beta t}}{a}\Bigr)\biggr)^{1/2}~.
\end{equation}
If we now take $a = t^{1/6}$, we see from \eqref{mbound4} that 
$\|m(\cdot,t)\|_{L^\infty} \le C'(1+t)^{1/6}$ for some $C' > 0$. 
Since $\widehat u$ is uniformly bounded, this proves \eqref{ubound2}. 
\QED

\begin{remark}\label{r:pointwise} It follows from the above proof 
(after optimizing the choice of $a$) that any solution of the 
Navier-Stokes equations \eqref{NSeq}, \eqref{pdef} with initial 
data in $X_M$ satisfies
\[
  \limsup_{t \to +\infty}\frac{\|u(\cdot,t)\|_{L^\infty}}{t^{1/6}}
  \,\le\, K e_*(0)^{1/3}~,
\]
where $K = 3(M/2)^{1/3}(\beta\kappa)^{1/6}$ depends only on $M$.
\end{remark}

\section{Convergence results for the Navier-Stokes 
equations}\label{sec8}

This final section is entirely devoted to the particular example of
the Navier-Stokes equations in the cylinder $\O = \R \times \T$. Our
goal is to use the results of Sections~\ref{sec4} to \ref{sec6} to
obtain qualitative informations on the long-time behavior of the
solutions. Without loss of generality, we fix $M > 0$ and work in 
the function space $X_M$ defined by \eqref{XMdef}, where equations 
\eqref{NSeq}, \eqref{pdef} define an extended dissipative system 
satisfying assumptions (A2'), (A5) for some constants $\beta, 
\gamma$. In particular, applying Proposition~\ref{p:L2energy} and 
using the explicit formula \eqref{edef} for the energy density, we 
obtain the estimate
\[
  \sup_{x_1 \in \R}\, \int_0^T \!\!\int_\T \frac12 |u(x_1,x_2,t)|^2 
  \dd x_2 \dd t \,\le\, \kappa e_*(0) T~,
\]
which proves \eqref{ubound3}. Similarly, if we denote $B_R = 
[-R,R] \times \T$, then Proposition~\ref{p:dissip} implies that
\begin{equation}\label{speedbound}
  \int_{B_R} \frac12 |u(x,T)|^2 \dd x + \int_0^T \int_{B_R} 
  |\nabla u(x,t)|^2 \dd x \dd t  \,\le\, 2e_*(0)(R + 
  \sqrt{\kappa\beta T})~,
\end{equation}
which is \eqref{ubound4}. Thus, to complete the proof of
Theorem~\ref{thm3}, it remains to establish \eqref{ubound5}. 

Let $\EE \subset X$ denote the set of equilibria of the 
Navier-Stokes equation \eqref{NSeq} in $X$, namely the set
of all constant velocity fields of the form $u = (0,m)^\tsp$, 
with $m \in \R$. Given $u \in X$ and $R > 0$, we define the 
distance from $u$ to $\EE$ on the finite cylinder $B_R = [-R,R] 
\times \T$ as 
\begin{equation}\label{dRdef}
  d_R(u,\EE) \,=\, \inf_{m \in \R} \,\sup_{x \in B_R} |u(x) - 
  (0,m)^\tsp|~.
\end{equation}
The following estimate will be useful\:

\begin{lemma}\label{l:dist}
Fix $\theta \in (0,1)$. There exists $C_5 > 0$ such 
that, for any $u \in X_M$ and any $R \ge 1$, one has
\begin{equation}\label{distest}
  d_R(u,\EE) \,\le\, C_5 M^\theta R^{\frac{1+\theta}{2}} \|\nabla 
  u\|_{L^2(B_R)}^{1-\theta}~.
\end{equation}
\end{lemma}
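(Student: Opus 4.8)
The plan is to choose the constant $m$ in \eqref{dRdef} to be a horizontal average of the mean vertical flow, and to estimate separately the oscillating part of $u$ and the deviation of the mean vertical flow from its average. I decompose $u = \langle u\rangle + \widehat u$ as in \eqref{udecomp}, with $\langle u\rangle = (0,m)^\tsp$ and $m = m(x_1) = \langle u_2\rangle$; since $u \in X_M$ we have $\langle u_1\rangle = 0$, so the horizontal velocity is purely oscillating ($u_1 = \widehat u_1$). Taking $\bar m = \frac{1}{2R}\int_{-R}^R m(x_1)\dd x_1$, I bound
\[
  d_R(u,\EE) \,\le\, \sup_{x\in B_R}\bigl|u(x) - (0,\bar m)^\tsp\bigr|
  \,\le\, \|\widehat u\|_{L^\infty(B_R)} + \sup_{|x_1|\le R}|m(x_1) - \bar m|~,
\]
so it suffices to estimate the two terms on the right by the claimed quantity.

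For the mean vertical flow I exploit $\partial_1 m = \langle\omega\rangle$. On the one hand $|\partial_1 m|\le \|\omega\|_{L^\infty(\O)}\le M$; on the other hand, since $|\omega|\le|\nabla u|$ and $\T$ has length $1$, the Cauchy--Schwarz inequality gives $|\partial_1 m(x_1)|\le \|\omega(x_1,\cdot)\|_{L^2(\T)}\le \|\nabla u(x_1,\cdot)\|_{L^2(\T)}$. Interpolating these two pointwise bounds yields $|\partial_1 m(x_1)|\le M^\theta\|\nabla u(x_1,\cdot)\|_{L^2(\T)}^{1-\theta}$. Since $|m(x_1)-\bar m|\le\int_{-R}^R|\partial_1 m(y)|\dd y$ for every $x_1\in[-R,R]$, an application of H\"older's inequality on $[-R,R]$ with conjugate exponents $\frac{2}{1-\theta}$ and $\frac{2}{1+\theta}$ gives
\[
  \sup_{|x_1|\le R}|m(x_1)-\bar m| \,\le\, M^\theta (2R)^{(1+\theta)/2}
  \Bigl(\int_{-R}^R\|\nabla u(y,\cdot)\|_{L^2(\T)}^2\dd y\Bigr)^{(1-\theta)/2}
  \,=\, M^\theta(2R)^{(1+\theta)/2}\|\nabla u\|_{L^2(B_R)}^{1-\theta}~,
\]
which is already of the desired form; this clean computation dictates the exponent $\frac{1+\theta}{2}$ on $R$.

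For the oscillating part I would prove the Gagliardo--Nirenberg-type bound $\|\widehat u\|_{L^\infty(B_R)}\le C M^\theta R^{(1+\theta)/2}\|\nabla u\|_{L^2(B_R)}^{1-\theta}$. Set $q = 2/(1-\theta)>2$, which is admissible since $\theta\in(0,1)$. Because $\widehat u$ has vanishing vertical average, $\int_{B_R}\widehat u\dd x = 0$, so $\widehat u$ vanishes somewhere on $B_R$ and Morrey's inequality on the cylinder $B_R$ (whose constant scales like $R^{1-2/q}$) gives $\|\widehat u\|_{L^\infty(B_R)}\le C R^{1-2/q}\|\nabla\widehat u\|_{L^q(B_R)}$. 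It then remains to interpolate $\|\nabla\widehat u\|_{L^q(B_R)}$ between the small quantity $\|\nabla u\|_{L^2(B_R)}$ and the uniform bound $\|\nabla u\|_{\BMO(\O)}\le C_3 M$ supplied by Lemma~\ref{l:bounds}, \eqref{nablauest}; note that $\nabla\widehat u$ differs from $\nabla u$ only in the entry $\partial_1 m$, which is bounded by $M$ and hence also $\BMO$-controlled. Assuming the interpolation bound $\|\nabla\widehat u\|_{L^q(B_R)}\le C\|\nabla u\|_{L^2(B_R)}^{2/q}M^{1-2/q}$, substituting back with $\theta = 1-2/q$ produces $\|\widehat u\|_{L^\infty(B_R)}\le C M^\theta R^{\theta}\|\nabla u\|_{L^2(B_R)}^{1-\theta}\le C M^\theta R^{(1+\theta)/2}\|\nabla u\|_{L^2(B_R)}^{1-\theta}$, the last step using $R\ge 1$ together with $\theta\le\frac{1+\theta}{2}$. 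Adding the two contributions yields \eqref{distest}.

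The main obstacle is precisely this local $L^2$--$\BMO$ interpolation of $\nabla u$ on $B_R$: because the endpoint space is $\BMO$ rather than $L^\infty$, the inequality is not a direct consequence of H\"older's inequality, and the constant must be shown to be independent of $R$. I would establish it through the John--Nirenberg inequality applied to $\nabla u - \frac{1}{|B_R|}\int_{B_R}\nabla u\,\dd x$, estimating its distribution function by Chebyshev's inequality (using $\|\nabla u\|_{L^2(B_R)}$) in the range of small values and by the exponential John--Nirenberg bound (using $\|\nabla u\|_{\BMO}$) in the range of large values, and then integrating in the layer-cake representation. The subtracted mean is harmless: its vertical component averages to zero by periodicity in $x_2$, and its horizontal component is $O(M/R)$ thanks to the uniform estimate $\|\widehat u\|_{L^\infty(\O)}\le C_1 M$ from \eqref{upest}. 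This step is technical but routine, and it is where the genuine content of the lemma resides; the mean-flow estimate and the Morrey reduction above are comparatively elementary.
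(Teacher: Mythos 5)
Your overall route coincides with the paper's: you split $u$ into the mean vertical flow $(0,m)^\tsp$ and the oscillating part $\widehat u$, you control the oscillation of $m$ through $\partial_1 m=\langle\omega\rangle$ by interpolating the pointwise bound $M$ against an $L^2$ quantity and applying H\"older's inequality (this half of your argument is correct and essentially identical to the paper's), and you control $\widehat u$ by a zero-mean Sobolev embedding on $B_R$ followed by an $L^2$--$\BMO$ interpolation of the gradient based on \eqref{nablauest}. The genuine flaw is in your Morrey step: the inequality $\|\widehat u\|_{L^\infty(B_R)}\le C R^{1-2/q}\|\nabla\widehat u\|_{L^q(B_R)}$, justified only by ``$\widehat u$ vanishes somewhere on $B_R$'', is false. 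The cylinder $B_R=[-R,R]\times\T$ is not a rescaled square: it is long in $x_1$ but of fixed width $1$ in $x_2$, so the constant cannot be obtained by scaling. Indeed, for $v(x_1,x_2)=g(x_1/R)$ with $g$ fixed, non-constant and vanishing somewhere, the left-hand side is a fixed positive number, while $R^{1-2/q}\|\nabla v\|_{L^q(B_R)}=CR^{-1/q}\to 0$ as $R\to\infty$. The correct scaling for functions normalized by a vanishing mean, obtained by chaining across the roughly $R$ unit cells of $B_R$ (this is what the paper's factor $C_pR^{1-1/p}$ encodes), is $R^{1-1/q}$, not $R^{1-2/q}$. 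The error happens to be benign: since $q=2/(1-\theta)$, one has $R^{1-1/q}=R^{\frac{1+\theta}{2}}$, which is exactly the exponent in \eqref{distest}, so the corrected constant still closes the argument---it merely consumes the slack you left in the step $R^\theta\le R^{\frac{1+\theta}{2}}$. (Alternatively, the stronger fact that $\widehat u$ has zero average on \emph{every} circle $\{x_1\}\times\T$, not just zero mean over $B_R$, gives an $R$-independent constant by applying Morrey on a single unit cell around each point.)

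Two caveats also apply to the interpolation step, which you rightly single out as the technical core (the paper only asserts it in one line). First, the plain layer-cake argument you sketch (Chebyshev below a threshold, John--Nirenberg above) does not yield the clean bound $\|\nabla\widehat u\|_{L^q(B_R)}\le CM^{1-2/q}\|\nabla\widehat u\|_{L^2(B_R)}^{2/q}$: optimizing the threshold leaves a factor $\bigl(1+\log_+\bigl(M^2R/\|\nabla\widehat u\|_{L^2(B_R)}^2\bigr)\bigr)^{1-2/q}$ in the regime where the $L^2$ norm is small. This logarithm can be removed by a localized Fefferman--Stein sharp-function (good-$\lambda$) argument, or simply absorbed by proving the lemma for a slightly smaller $\theta'<\theta$ and using $\log_+ z\le C_\epsilon z^\epsilon$, which is legitimate since $\theta\in(0,1)$ is arbitrary; but as written your sketch is incomplete. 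Second, your handling of the subtracted mean needs repair: the $O(M/R)$ bound is correct for the mean of $\nabla\widehat u$ over $B_R$ but not for $\nabla u$, whose entry $\partial_1 m$ has mean $(m(R)-m(-R))/(2R)=O(M)$; and even for $\nabla\widehat u$ the bound $O(M/R)$ alone is not ``harmless''---after multiplying by the Morrey factor it contributes a term of size $M$ lacking the factor $\|\nabla u\|_{L^2(B_R)}^{1-\theta}$, which would ruin the lemma precisely where it has content. One must combine it with the Cauchy--Schwarz bound $|{\rm mean}|\le |B_R|^{-1/2}\|\nabla\widehat u\|_{L^2(B_R)}$ (say, via a geometric mean with weights $\theta$ and $1-\theta$), which then gives a contribution $\le CM^\theta R^{\frac{1-\theta}{2}}\|\nabla\widehat u\|_{L^2(B_R)}^{1-\theta}$ of the desired form.
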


\begin{proof}
We decompose $u = (0,m)^\tsp + \widehat{u}$, where $m = \langle u_2
\rangle$. Since $\partial_1 m = \langle \omega\rangle$ and 
$|\omega| \le M$, we have using H\"older's inequality
\begin{equation}\label{lem1}
  \sup_{|x_1| \le R} |m(x_1) - m(0)| \,\le\, \int_{B_R} |\omega|\dd x \,\le\, 
  M^\theta \int_{B_R} |\omega|^{1-\theta}\dd x \,\le\, C 
  M^\theta R^{\frac{1+\theta}{2}} \|\omega\|_{L^2(B_R)}^{1-\theta}~.
\end{equation}
On the other hand, since $\widehat{u}$ has zero mean over $B_R$, 
the Sobolev embedding theorem and the Poincar\'e-Wirtinger 
inequality imply that, if $2 < p < \infty$, 
\[
  \|\widehat{u}\|_{L^\infty(B_R)} \,\le\, C_p R^{1-\frac{1}{p}} \|\nabla 
  \widehat{u}\|_{L^p(B_R)}~,
\]
where the constant depends only on $p$ (here we use the 
assumption that $R \ge 1$). Moreover, interpolating between
$L^2$ and $\BMO$ and using \eqref{nablauest}, we obtain
\[
  \|\nabla \widehat{u}\|_{L^p(B_R)} \,\le\, C_p \|\nabla 
  \widehat{u}\|_{L^2(B_R)}^\frac{2}{p} \|\nabla \widehat{u}
  \|_{\BMO(\O)}^{1 - \frac{2}{p}} \,\le\, C_p M^{1 - \frac{2}{p}}
  \|\nabla \widehat{u}\|_{L^2(B_R)}^\frac{2}{p}~.
\]
Choosing $p = 2/(1{-}\theta)$, we thus find
\begin{equation}\label{lem2}
  \|\widehat{u}\|_{L^\infty(B_R)} \,\le\, C M^\theta R^{\frac{1+\theta}{2}}
  \|\nabla \widehat{u}\|_{L^2(B_R)}^{1-\theta}~.
\end{equation}
If we now combine \eqref{lem1} and \eqref{lem2}, we obtain
\[
  d_R(u,\EE) \,\le\, \sup_{x \in B_R} |u(x) - (0,m(0))^\tsp| \,\le\,  
  C_5 M^\theta R^{\frac{1+\theta}{2}}\|\nabla u\|_{L^2(B_R)}^{1-\theta}~,
\]
where $C_5 > 0$ depends only on $\theta$. This is the desired estimate. 
\end{proof}

The distance \eqref{dRdef} allows us to introduce the following 
family of neighborhoods of the set of equilibria. Given $\epsilon > 0$ 
and $R > 0$, we denote
\[
  \UU_{\epsilon ,R} \,=\, \{u\in X \,|\, d_R(u,\EE) < \epsilon\}~.
\]
Using estimate \eqref{speedbound} and Lemma~\ref{l:dist}, we 
now show that any solution of the Navier-Stokes equation in 
$X_M$ spends a relatively small fraction of its lifetime outside 
$\UU_{\epsilon,R}$, even if $\epsilon > 0$ is very small and 
$R > 0$ very large. More precisely, we have

\begin{proposition}\label{p:timespent}
Fix $\theta \in (0,1)$ and $M > 0$. There exists $C_6 > 0$ 
such that, if $u \in C^0([0,\infty),X)$ is any solution of the 
Navier-Stokes equations \eqref{NSeq}, \eqref{pdef} with 
initial data in $X_M$, the following estimate holds 
for any $\epsilon > 0$, any $R \ge 1$, and any $T > 0$\:
\begin{equation}\label{timespent}
  \int_0^T \1_{\UU_{\epsilon ,R}^c}(u(t)) \dd t \,\le\, \frac{C_6}{\epsilon}
  \,(RT)^{\frac{1+\theta}{2}}\Bigl(e_*(0)(R + \sqrt{\kappa\beta T})
  \Bigr)^{\frac{1-\theta}{2}}~,
\end{equation}
where $\1_{\UU_{\epsilon ,R}^c}$ is the characteristic function
of the complement of $\UU_{\epsilon ,R}$. 
\end{proposition}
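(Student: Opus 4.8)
The plan is to control the time spent outside $\UU_{\epsilon,R}$ by comparing the characteristic function $\1_{\UU_{\epsilon,R}^c}$ against the distance $d_R(u(t),\EE)$, and then to estimate the time integral of that distance using Lemma~\ref{l:dist} together with the global dissipation bound \eqref{speedbound}.

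First I would note that, by definition of $\UU_{\epsilon,R}$, one has $u(t)\notin\UU_{\epsilon,R}$ precisely when $d_R(u(t),\EE)\ge\epsilon$, so that
\[
  \epsilon\,\1_{\UU_{\epsilon,R}^c}(u(t)) \,\le\, d_R(u(t),\EE)~,
  \qquad t \ge 0~.
\]
Integrating over $[0,T]$ and inserting Lemma~\ref{l:dist} gives
\[
  \epsilon \int_0^T \1_{\UU_{\epsilon,R}^c}(u(t))\dd t \,\le\, \int_0^T
  d_R(u(t),\EE)\dd t \,\le\, C_5 M^\theta R^{\frac{1+\theta}{2}}
  \int_0^T \|\nabla u(t)\|_{L^2(B_R)}^{1-\theta}\dd t~.
\]

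The key step is to apply H\"older's inequality in the time variable with the conjugate exponents $p = 2/(1-\theta)$ and $q = 2/(1+\theta)$, writing the integrand as $\|\nabla u(t)\|_{L^2(B_R)}^{1-\theta}\cdot 1$. Since $(1-\theta)p = 2$, this produces exactly the squared gradient norm that is controlled by \eqref{speedbound}:
\[
  \int_0^T \|\nabla u(t)\|_{L^2(B_R)}^{1-\theta}\dd t \,\le\,
  \Bigl(\int_0^T \|\nabla u(t)\|_{L^2(B_R)}^2\dd t\Bigr)^{\frac{1-\theta}{2}}
  T^{\frac{1+\theta}{2}} \,\le\, \Bigl(2e_*(0)(R+\sqrt{\kappa\beta T})
  \Bigr)^{\frac{1-\theta}{2}} T^{\frac{1+\theta}{2}}~.
\]

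Combining the two displays and dividing by $\epsilon$ yields \eqref{timespent}, with the factor $R^{(1+\theta)/2}$ from Lemma~\ref{l:dist} and the factor $T^{(1+\theta)/2}$ from the H\"older step assembling into $(RT)^{(1+\theta)/2}$; the constants $C_5$, $2^{(1-\theta)/2}$ and $M^\theta$ (recall that $M$ is fixed) are absorbed into a single $C_6$. The argument is a short chain of inequalities, so I do not expect a genuine obstacle. The only point requiring care is the choice of H\"older exponent $p = 2/(1-\theta)$: it is dictated by the requirement that $(1-\theta)p = 2$ so as to match the $L^2$-in-space, $L^1$-in-time dissipation control of \eqref{speedbound}, and one should check that the residual power of $T$ combines correctly with the spatial factor from Lemma~\ref{l:dist} to produce the stated exponent.
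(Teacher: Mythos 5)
Your proposal is correct and follows essentially the same argument as the paper: bound the characteristic function by $d_R(u(t),\EE)/\epsilon$, apply Lemma~\ref{l:dist}, and then use H\"older's inequality in time with exponent $2/(1-\theta)$ together with the dissipation bound \eqref{speedbound}. The exponents and constants assemble exactly as in the paper's proof, so there is nothing to add.
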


\begin{proof}
Using the definition of the set $\UU_{\epsilon ,R}$ and 
estimate \eqref{distest}, we easily find
\[
 \int_0^T \1_{\UU_{\epsilon,R}^c}(u(t))\dd t \,\le\, 
 \frac{1}{\epsilon}\int_0^T d_R(u(t),\EE)\dd t 
 \,\le\, \frac{C_5}{\epsilon}\,M^\theta R^{\frac{1+\theta}{2}} 
 \int_0^T \|\nabla u(t)\|_{L^2(B_R)}^{1-\theta}\dd t~.
\]
Moreover, H\"older's inequality and estimate \eqref{speedbound}
imply
\[
  \int_0^T \|\nabla u(t)\|_{L^2(B_R)}^{1-\theta}\dd t \,\le\, 
  \biggl(\int_0^T \|\nabla u(t)\|_{L^2(B_R)}^2\dd t\biggr)^{\frac{1-
  \theta}{2}}T^{\frac{1+\theta}{2}} \,\le\, \Bigl(2e_*(0)(R + 
  \sqrt{\kappa\beta T})\Bigr)^{\frac{1-\theta}{2}}T^{\frac{1+\theta}{2}}~.
\]
Combining both inequalities, we arrive at \eqref{timespent}. 
\end{proof}

There are several ways to exploit the conclusion of 
Proposition~\ref{p:timespent}. If we fix $\epsilon, R$ and 
take $\theta$ sufficiently small, we obtain the following 
result which already implies estimate \eqref{ubound5} in 
Theorem~\ref{thm3}. 

\begin{corollary}\label{c:time1}
Any solution $u \in C^0([0,\infty),X)$ of the Navier-Stokes 
equations \eqref{NSeq}, \eqref{pdef} with initial data in 
$X_M$ satisfies
\[
  \limsup_{T\to \infty}\frac{1}{T^{\frac{3+\theta}{4}}}\int_0^T 
  \1_{\UU_{\epsilon,R}^{c}}(u(t))\dd t \,<\, \infty~,
\]
for all $\epsilon > 0$, all $R \ge 1$, and all $\theta > 0$. 
\end{corollary}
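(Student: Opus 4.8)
The plan is to deduce Corollary~\ref{c:time1} directly from Proposition~\ref{p:timespent} by making a clever choice of the auxiliary exponent. The key observation is that Proposition~\ref{p:timespent} holds for \emph{every} $\theta \in (0,1)$ with a constant $C_6$ that depends on $\theta$, so I am free to pick $\theta$ as small as I wish to make the resulting power of $T$ as close to the limiting value as possible. First I would fix $\epsilon > 0$ and $R \ge 1$ (which are given in the statement) and examine how the right-hand side of \eqref{timespent} scales in $T$ as $T \to \infty$.

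The main step is a routine but careful bookkeeping of the $T$-dependence. In \eqref{timespent}, the prefactor carries a power $T^{\frac{1+\theta}{2}}$, and the bracketed term $e_*(0)(R + \sqrt{\kappa\beta T})$ behaves like a constant times $T^{1/2}$ for large $T$ (since $R$ is fixed), contributing an additional factor $T^{\frac{1}{2}\cdot\frac{1-\theta}{2}} = T^{\frac{1-\theta}{4}}$. Adding these two exponents gives
\[
  \frac{1+\theta}{2} + \frac{1-\theta}{4} \,=\, \frac{2(1+\theta) + (1-\theta)}{4}
  \,=\, \frac{3+\theta}{4}~,
\]
which is exactly the normalizing power appearing in the corollary. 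Hence, after absorbing all the fixed quantities $\epsilon, R, e_*(0), \kappa, \beta, M, C_6$ into a single constant, the right-hand side of \eqref{timespent} is bounded by a constant times $T^{\frac{3+\theta}{4}}$ for all large $T$. Dividing by $T^{\frac{3+\theta}{4}}$ and taking the limit superior yields a finite bound, which is the claim.

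The only point requiring a little care is to make precise the asymptotic $e_*(0)(R + \sqrt{\kappa\beta T}) \le C\, T^{1/2}$: strictly this holds only for $T$ bounded away from zero, but since $R \ge 1$ is fixed we have $R + \sqrt{\kappa\beta T} \le (1 + \sqrt{\kappa\beta})\max(R, T^{1/2})$, and for $T$ large enough the maximum is realized by the $T^{1/2}$ term. Since we are only taking a limit superior as $T \to \infty$, the behavior for small $T$ is irrelevant, so this poses no genuine obstacle. I do not expect any real difficulty here, since all the analytic work has already been done in establishing Proposition~\ref{p:timespent}; the corollary is essentially a matter of reading off the correct scaling exponent and noting that $\theta$ can be chosen freely in $(0,1)$. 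The relation between this corollary and \eqref{ubound5} of Theorem~\ref{thm3} comes by taking $\theta < 1$ so that $\frac{3+\theta}{4} < 1$, whence the averaged quantity $\frac{1}{T}\int_0^T \1_{\UU_{\epsilon,R}^c}(u(t))\dd t$ tends to zero.
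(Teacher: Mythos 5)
Your proposal is correct and follows exactly the paper's route: the paper deduces Corollary~\ref{c:time1} from Proposition~\ref{p:timespent} by fixing $\epsilon$ and $R$ and reading off the scaling $T^{\frac{1+\theta}{2}}\cdot T^{\frac{1-\theta}{4}} = T^{\frac{3+\theta}{4}}$ of the right-hand side of \eqref{timespent}, precisely as you do (with the same implicit reduction of the case $\theta \ge 1$ to small $\theta$, since the Proposition is stated for $\theta \in (0,1)$ and the bound only improves as $\theta$ decreases).
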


It is also interesting to consider a time-dependent domain $B_{R(T)}$
whose size increases (sufficiently slowly) as $T \to \infty$. In that
case, we can still show that any solution of \eqref{NSeq} converges 
to the set of equilibria inside $B_{R(T)}$, except perhaps on a sparse 
subset of the time axis.

\begin{corollary}\label{c:time2}
Fix $a,b,c>0$ such that $a/2+b<c<1/4$. If $u \in C^0([0,\infty),X)$ 
is any solution of the Navier-Stokes equations \eqref{NSeq}, 
\eqref{pdef} with initial data in $X_M$, there exists $C_7 > 0$ 
such that, for all $T \ge 1$, 
\begin{equation}\label{r:measure}
 \mathrm{meas}\biggl\{t\in [0,T]\,\bigg|\, \inf_{m\in \mathbf{R}}
 \sup_{|x_1|\le T^a}\sup_{x_2\in \T}|u(x_1,x_2)-(0,m)^\tsp|
 \ge \frac{1}{T^b}\biggr\} \,\le\, C_7 T^{\frac34+c}~.
\end{equation}
\end{corollary}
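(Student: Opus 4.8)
The plan is to obtain Corollary~\ref{c:time2} as a direct specialization of Proposition~\ref{p:timespent}, with a time-dependent choice of the two free parameters $\epsilon$ and $R$. First I would observe that, by the definition of the distance $d_R(u,\EE)$ in \eqref{dRdef} and of the neighborhood $\UU_{\epsilon,R}$, the set whose measure appears in the left-hand side of \eqref{r:measure} is precisely the set of times $t \in [0,T]$ for which $d_R(u(t),\EE) \ge \epsilon$, evaluated at $R = T^a$ and $\epsilon = T^{-b}$. Hence its measure equals $\int_0^T \1_{\UU_{\epsilon,R}^c}(u(t))\dd t$ for these values. Since $a > 0$ and $T \ge 1$, we have $R = T^a \ge 1$, so Proposition~\ref{p:timespent} is applicable.

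Next I would insert $R = T^a$ and $\epsilon = T^{-b}$ into estimate \eqref{timespent} and simplify each factor as a power of $T$. The prefactor $\epsilon^{-1}$ contributes $T^b$, while $(RT)^{(1+\theta)/2} = T^{(a+1)(1+\theta)/2}$. To handle the energy factor, I would use the hypothesis $a/2 + b < c < 1/4$, which (since $b>0$) forces $a < 1/2$; therefore $T^a \le T^{1/2}$ for $T \ge 1$, so that $R + \sqrt{\kappa\beta T} \le (1+\sqrt{\kappa\beta})\,T^{1/2}$. Consequently the last factor in \eqref{timespent} is bounded by a constant (depending on $e_*(0)$, $\kappa$, $\beta$, $\theta$) times $T^{(1-\theta)/4}$.

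Collecting these powers, the right-hand side of \eqref{timespent} is bounded by $C\,T^\mu$, where
\[
  \mu \,=\, b + \frac{(a+1)(1+\theta)}{2} + \frac{1-\theta}{4}
  \,=\, \frac34 + b + \frac{a}{2} + \frac{2a+1}{4}\,\theta~.
\]
The decisive step is then the choice of the parameter $\theta \in (0,1)$. Because $a/2 + b < c$ by hypothesis, the quantity $c - b - a/2$ is strictly positive, so I may fix $\theta$ small enough that $\frac{2a+1}{4}\,\theta \le c - b - \frac{a}{2}$, which gives $\mu \le \frac34 + c$. With $\theta$ thus fixed in terms of $a,b,c$, Proposition~\ref{p:timespent} provides the constant $C_6$, and \eqref{r:measure} follows with $C_7$ depending only on $M$, $e_*(0)$, and $a,b,c$.

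The main difficulty here is bookkeeping rather than conceptual: one must track the exponents carefully and check that the two hypotheses on $a,b,c$ play exactly their intended roles. The constraint $c < 1/4$ is used to ensure $a < 1/2$, so that the diffusive scale $T^{1/2}$ dominates the prescribed spatial scale $T^a$ in the energy bound \eqref{speedbound} (replacing $R+\sqrt{\kappa\beta T}$ by $C\,T^{1/2}$), while the strict inequality $a/2 + b < c$ provides precisely the slack needed to absorb the $\theta$-dependent correction into the target exponent $\frac34 + c$. Once this is verified, the argument is complete.
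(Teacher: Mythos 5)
Your proof is correct and follows exactly the paper's route: substitute $R = T^a$, $\epsilon = T^{-b}$ into Proposition~\ref{p:timespent}, use $a < 1/2$ (forced by $a/2 + b < c < 1/4$) to bound $R + \sqrt{\kappa\beta T}$ by a constant times $T^{1/2}$, and take $\theta$ small enough that the resulting exponent $\frac34 + b + \frac{a}{2} + \frac{2a+1}{4}\theta$ does not exceed $\frac34 + c$. Your exponent bookkeeping agrees with the paper's bound $T^{\frac34 + \frac{a}{2} + b + \frac{\theta}{4}(1+2a)}$, so nothing is missing.
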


\begin{proof}
If we set $R = T^a$ and $\epsilon = T^{-b}$, the quantity in the
left-hand side of \eqref{r:measure} is exactly the integral 
$\int_0^T \1_{\UU_{\epsilon,R}^{c}}(u(t))\dd t$. Using \eqref{timespent} 
and the fact that $R = T^a \le \sqrt{T}$ since $a < 1/2$ and 
$T \ge 1$, we easily obtain
\[
  \int_0^T \1_{\UU_{\epsilon,R}^{c}}(u(t))\dd t \,\le\, 
  C (e_*(0))^{\frac{1-\theta}{2}} T^{\frac34 + \frac{a}{2} + b + 
  \frac{\theta}{4}(1+2a)}~.
\]
The conclusion now follows if we take $\theta > 0$ small enough.
\end{proof}

\section{Appendix}\label{sec9}

\subsection{Proof of Lemma~\ref{nlinlem}}
We start from \eqref{nlinexp} and recall that $\P_{jk} = \delta_{jk} 
+ R_j R_k$, where $R_1, R_2$ are the Riesz transforms. It follows 
that $\nabla\cdot e^{t\Delta}\P(u\otimes v) = \nabla\cdot e^{t\Delta}
(u\otimes v) + W(t,u,v)$, where 
\[
  W_j(t,u,v) \,=\, \sum_{k,\ell = 1}^2 \partial_\ell \,e^{t\Delta}
  R_j R_k u_\ell v_k \,=\, \sum_{k,\ell = 1}^2 \int_t^\infty \partial_j 
  \partial_k \partial_\ell \,e^{s\Delta} u_\ell v_k \dd s~, \qquad
  j = 1,2~.
\]
In the last equality, we used the fact that $R_j R_k \Delta = -\partial_j
\partial_k$ for $j,k = 1,2$. Now, for any $g \in L^\infty(\O)$ and 
any $t > 0$, we know that $e^{t\Delta}g \in C^\infty(\O)$ and 
$\|\partial^\alpha e^{t\Delta}g\|_{L^\infty} \le C_\alpha t^{-|\alpha|/2}
\|g\|_{L^\infty}$ for all $\alpha = (\alpha_1,\alpha_2) \in \N^2$ 
with $|\alpha| = |\alpha_1| + |\alpha_2| > 0$. If $u,v \in \BUC(\O)$, 
we thus have
\[
  \|\nabla\cdot e^{t\Delta}\P(u\otimes v)\|_{L^\infty} \,\le\, 
  \frac{C}{\sqrt{t}}\|u\|_{L^\infty}\|v\|_{L^\infty} + \int_t^\infty
  \frac{C}{s^{3/2}} \|u\|_{L^\infty}\|v\|_{L^\infty} \,\le\, 
  \frac{C}{\sqrt{t}}\|u\|_{L^\infty}\|v\|_{L^\infty}~,
\]
which proves \eqref{nlinest}. The same argument shows that
$\nabla\cdot e^{t\Delta}\P(u\otimes v) \in \BUC(\O)$. \QED

\subsection{Proof of Proposition~\ref{localex}}
Given $u_0 \in \BUC(\O)$ with $\div u_0 = 0$, we take $R > 0$ and $T >
0$ such that $2\|u_0\|_{L^\infty} \le R$ and $4 C_0 R T^{1/2} < 1$,
where $C_0 > 0$ is the constant in Lemma~\ref{nlinlem}. We introduce the
Banach space $X = C^0([0,T],\BUC(\O))$ equipped with the norm
$$
  \|u\|_X \,=\, \sup_{0 \le t \le T} \|u(t)\|_{L^\infty(\O)}~,
$$
and we set $B_R = \{u \in X\,|\, \|u\|_X \le R\}$. For all $u \in X$
and all $t \in [0,T]$, we denote by $(Fu)(t)$ the expression in 
the right-hand side of \eqref{NSint}. 

If $u \in B_R$, then by Lemma~\ref{nlinlem} we have for all 
$t \in [0,T]$\:
\begin{align*}
  \|(Fu)(t)\|_{L^\infty} \,\le\, \|u_0\|_{L^\infty} + 
  \int_0^t \frac{C_0}{\sqrt{t-s}}\,\|u(s)\|_{L^\infty}^2 \dd s
   \,\le\, \|u_0\|_{L^\infty} + 2 C_0 T^{1/2} R^2 \,\le\, R~.
\end{align*}
Thus $F$ maps $B_R$ into itself, and a similar calculation shows 
that $\|Fu - Fv\|_X \le \kappa \|u-v\|_X$ for all $u,v \in B_R$, 
where $\kappa = 4 C_0 R T^{1/2} < 1$. Thus Eq.~\eqref{NSint} has a 
unique solution in $B_R$, and applying Gronwall's lemma it is easy 
to verify that $u$ is also the unique solution of \eqref{NSint} in 
the whole space $X$. Finally, proceeding as in \cite{GMS}, one
can prove that $t^{1/2}\nabla u \in C^0_b((0,T],\BUC(\O))$. \QED

\subsection{Proof of Lemma \protect\ref{l:bounds}}

We first observe that, since $\widehat \omega$ has zero average
in the vertical variable, the Biot-Savart formula \eqref{BS} can 
be written in the equivalent form $\widehat u = \nabla^\perp 
\overline{K} * \widehat \omega$, where
$$
  \overline{K}(x_1,x_2) \,=\, K(x_1,x_2) - \frac{|x_1|}{2}~, \qquad
  (x_1,x_2) \in \O~.
$$
Now it is easy to verify that $\overline{K} \in L^1(\O)$ and 
$\partial_j \overline{K} \in L^1(\O)$ for $j = 1,2$, see \cite{AM}. 
Using Young's inequality, we deduce
\begin{equation}\label{uprel}
  \|\widehat u_1\|_{L^\infty} \,\le\, \|\partial_2 K\|_{L^1}
  \|\widehat \omega\|_{L^\infty}~, \qquad
  \|\widehat u_2\|_{L^\infty} \,\le\, \|\partial_1 \overline{K}\|_{L^1}
  \|\widehat \omega\|_{L^\infty}~,
\end{equation}
and the first inequality in \eqref{upest} follows since 
$\|\widehat \omega\|_{L^\infty} \le 2 \|\omega\|_{L^\infty}$. 

The next step is to establish the formula \eqref{pdef2} for 
the pressure. The easiest way is to use the identity
\begin{equation}\label{uid}
  \div((u\cdot\nabla)u) \,=\, \Delta(u_1^2) + 2 \partial_2
  (\omega u_1)~,
\end{equation}
which holds for any divergence-free vector field $u = (u_1,u_2)$ 
with vorticity $\omega = \partial_1 u_2 - \partial_2 u_1$. Since 
$-\Delta p = \div((u\cdot\nabla)u)$ and since $K$ is the fundamental 
solution of the Laplace operator, it follows from \eqref{uid}
that we can indeed take $-p = u_1^2 + 2\partial_2 K*(\omega u_1)$. 
It is also possible to derive \eqref{pdef2} directly from the formal 
expression \eqref{pdef}. Anyway, using \eqref{pdef2}, \eqref{uprel},
and the fact that $u_1 = \widehat{u}_1$, we find
\[
  \|p\|_{L^\infty} \,\le\, \|\widehat u_1\|_{L^\infty}^2 + 2 
  \|\partial_2 K\|_{L^1} \|\omega\|_{L^\infty}\|\widehat u_1\|_{L^\infty}
  \,\le\, C\|\partial_2 K\|_{L^1}^2 \|\omega\|_{L^\infty}^2~.
\]
This proves the second inequality in \eqref{upest}. 

Finally, to estimate $\nabla u$, we observe that
\[
  \partial_1 u_1 \,=\, -\partial_2 u_2 \,=\, R_1 R_2 \omega~,
  \quad \partial_1 u_2 = -R_1^2 \omega~, \quad \partial_2 u_1 = 
  R_2^2 \omega~,
\]
and we use the well-known fact that the Riesz operators are 
bounded from $L^\infty(\O)$ to $\BMO(\O)$, see \cite[Chapter IV]{St}. 
This concludes the proof of Lemma~\ref{l:bounds}. \QED


\begin{thebibliography}{99}
\setlength{\itemsep}{-0.4mm}

\bibitem{AM} A. Afendikov and A. Mielke, Dynamical properties of
spatially non-decaying 2D Navier-Stokes flows with Kolmogorov 
forcing in an infinite strip, J. Math. Fluid. Mech. \textbf{7} 
(2005), suppl. 1, S51--S67

\bibitem{ARCD} J. Arrieta, A. Rodriguez-Bernal, J. Cholewa, and
T. Dlotko, Linear parabolic equations in locally uniform
spaces, Math. Models Methods Appl. Sci.  \textbf{14} (2004),
253--293.

\bibitem{FS} C. Foias and J.-C. Saut, 
Asymptotic behavior, as $t \to \infty$, of solutions of Navier-Stokes 
equations and nonlinear spectral manifolds,
Indiana Univ. Math. J. \textbf{33} (1984), 459--477. 

\bibitem{GS1} Th.\ Gallay and S. Slijep\v{c}evi\'{c}, Energy flow in
formally gradient partial differential equations on unbounded domains, J.
Dynam. Differential Equations \textbf{13} (2001), 757--789.

\bibitem{GS2} Th.\ Gallay and S. Slijep\v{c}evi\'{c}, Distribution of
Energy and Convergence to Equilibria in Extended Dissipative Systems,
preprint, arXiv:1212.1573. 

\bibitem{GW} Th. Gallay and C.E. Wayne, 
Invariant manifolds and the long-time asymptotics of the Navier-Stokes 
and vorticity equations on $\R^2$, 
Arch. Ration. Mech. Anal. \textbf{163} (2002), 209--258. 

\bibitem{GMS} Y. Giga, S. Matsui, O. Sawada, Global existence of two
dimensional Navier-Stokes flow with non-decaying initial velocity, 
J. Math. Fluid. Mech. \textbf{3} (2001), 302--315.

\bibitem{Ka} J. Kato, The Uniqueness of Nondecaying Solutions 
for the Navier-Stokes Equations, Arch. Ration. Mech. Anal. 
\textbf{169} (2003), 159--175. 

\bibitem{Ma} K. Masuda, Weak solutions of Navier-Stokes equations, 
Tohoku Math. J. \textbf{36} (1984), 623--646. 

\bibitem{ST} O. Sawada and Y. Taniuchi, A remark on $L^{\infty }$
solutions to the 2-D Navier-Stokes equations, J. Math. Fluid Mech. 
\textbf{9} (2007), 533--542.

\bibitem{Sch} M. Schonbek, 
$L^2$ decay for weak solutions of the Navier-Stokes equations, 
Arch. Rational Mech. Anal. \textbf{88} (1985), 209--222. 

\bibitem{Sl} S. Slijep\v{c}evi\'c, 
Extended gradient systems: dimension one, Discrete Contin. Dynam. 
Systems \textbf{6} (2000), 503--518. 

\bibitem{St} E. Stein, {\em Harmonic Analysis: Real-Variables 
Methods, Orthogonality, and Oscillatory Integrals}, Princeton 
University Press, 1993. 

\bibitem{Wi} M. Wiegner, 
Decay results for weak solutions of the Navier-Stokes equations
on $\R^n$, J. Lond. Math. Soc. II. Ser. \textbf{35} (1987), 303--313. 

\bibitem{Ze} S. Zelik, Infinite energy solutions for damped
Navier-Stokes equations in $\R^2$, preprint, arXiv:1203.5733.
\end{thebibliography}
\end{document}